\newcommand{\mult}{\times}
\newcommand{\Hyp}{\scrH}
\newcommand{\borel}{E}
\newcommand{\norm}{s}
\newcommand{\qdim}{N}
\newcommand{\pdim}{M}
\newcommand{\bothdim}{{M,N}}
\newcommand{\Index}{n}
\theoremstyle{theorem}
\begin{document}
\title[Extremality and dynamically defined measures, I]{Extremality and dynamically defined measures, part I: Diophantine properties of quasi-decaying measures}

\authortushar\authorlior\authordavid\authormariusz

\subjclass[2010]{Primary 11J13, 11J83, 28A75, secondary 37F35}
\keywords{Metric Diophantine approximation, extremal measures, friendly measures, geometric measure theory, fractals}

\begin{Abstract}
We present a new method of proving the Diophantine extremality of various dynamically defined measures, vastly expanding the class of measures known to be extremal. This generalizes and improves the celebrated theorem of Kleinbock and Margulis ('98) resolving Sprind\v zuk's conjecture, as well as its extension by Kleinbock, Lindenstrauss, and Weiss ('04), hereafter abbreviated KLW. As applications we prove the extremality of all hyperbolic measures of smooth dynamical systems with sufficiently large Hausdorff dimension, of the Patterson--Sullivan measures of all nonplanar geometrically finite groups, and of the Gibbs measures (including conformal measures) of infinite iterated function systems. The key technical idea, which has led to a plethora of new applications, is a significant weakening of KLW's sufficient conditions for extremality.

In Part I, we introduce and develop a systematic account of two classes of measures, which we call {\it quasi-decaying} and {\it weakly quasi-decaying}. We prove that weak quasi-decay implies strong extremality in the matrix approximation framework, thus proving a conjecture of KLW. We also prove the ``inherited exponent of irrationality'' version of this theorem, describing the relationship between the Diophantine properties of certain subspaces of the space of matrices and measures supported on these subspaces.

In subsequent papers, we exhibit numerous examples of quasi-decaying measures, in support of the thesis that ``almost any measure from dynamics and/or fractal geometry is quasi-decaying''. 
We also discuss examples of non-extremal measures coming from dynamics, illustrating where the theory must halt.
\end{Abstract}
\maketitle

\section{Introduction}

In this series of papers we address a central problem in the flourishing area of metric Diophantine approximation on manifolds and measures: an attempt to exhibit a possibly widest natural class of sets and measures for which most points are not very well approximable by ones with rational coordinates.

Fix $d\in\N$. The quality of rational approximations to a vector $\xx\in\R^d$ can be measured by its \emph{exponent of irrationality}, which is defined by the formula
\[
\omega(\xx) = \limsup_{\pp/q\in\Q^d} \frac{-\log\|\xx - \pp/q\|}{\log(q)},
\]
where the limsup is taken over any enumeration of $\Q^d$, and $\|\cdot\|$ is any norm on $\R^d$. Another interesting quantity is the \emph{exponent of multiplicative irrationality}, which is the number
\[
\omega_\mult(\xx) = \limsup_{\pp/q\in\Q^d} \frac{-\log\prod_{i = 1}^d |x_i - p_i/q|}{\log(q)}\cdot
\]
It follows from a pigeonhole argument that $\omega(\xx) \geq 1 + 1/d$ and $\omega_\mult(\xx) \geq d + 1$. A vector $\xx$ is said to be \emph{very well approximable} if $\omega(\xx) > 1 + 1/d$, and \emph{very well multiplicatively approximable} if $\omega_\mult(\xx) > d + 1$. We will denote the set of very well (multiplicatively) approximable vectors by $\mathrm{VW(M)A}_d$. It is well-known that $\VWA_d$ and $\VWMA_d$ are both Lebesgue nullsets of full Hausdorff dimension, and that $\VWA_d \subset \VWMA_d$.

A measure $\mu$ on $\R^d$ is \emph{extremal} if $\mu(\VWA_d) = 0$, and \emph{strongly extremal} if $\mu(\VWMA_d) = 0$. Extremality was first defined by V.~G.~Sprind\v zuk, who conjectured that the Lebesgue measure of any nondegenerate manifold is extremal. This conjecture was proven by D.~Y.~Kleinbock and G.~A.~Margulis \cite{KleinbockMargulis}, and later strengthened by D.~Y.~Kleinbock, E.~Lindenstrauss, and B.~Weiss (hereafter abbreviated ``KLW'') in \cite{KLW}, who considered a class of measures which they called ``friendly'' and showed that these measures are strongly extremal. However, their definition is somewhat rigid and many interesting measures, in particular ones coming from dynamics, do not satisfy their condition. In this paper, we study a much larger class of measures, which we call \emph{weakly quasi-decaying}, such that every weakly quasi-decaying measure is strongly extremal (Corollary \ref{corollaryKLW}). This class includes a subclass of \emph{quasi-decaying} measures, which are the analogue of KLW's ``absolutely friendly'' measures.\Footnote{The terminology ``absolutely friendly'' was not used by KLW and first appeared in \cite{PollingtonVelani}; however, several theorems about absolute friendliness had already appeared in \cite{KLW} without using the terminology.}

In the current paper (Part I), we demonstrate the most basic properties of the quasi-decay condition, including the facts that every exact dimensional measure is quasi-decaying, and that every quasi-decaying measure is extremal, which we prove using an elementary argument. We also prove the result stated above that every weakly quasi-decaying measure is strongly extremal (in particular verifying a conjecture of KLW), as well as considering the approximation properties of quasi-decaying measures on the space of matrices and on affine subspaces of $\R^d$. In particular we generalize results of some recent papers regarding approximation of friendly measures in the matrix framework \cite{KMW, ABRdS} (cf. Theorems \ref{theoremKLW} and \ref{theoremexponents} below).

\tableofcontents

{\bf Notation.} For the reader's convenience we summarize a list of notations and conventions:

\begin{convention}
The symbols $\lesssim$, $\gtrsim$, and $\asymp$ will denote coarse asymptotics; a subscript of $\plus$ indicates that the asymptotic is additive, and a subscript of $\times$ indicates that it is multiplicative. For example, $A\lesssim_\times B$ means that there exists a constant $C > 0$ (the \emph{implied constant}) such that $A\leq C B$.

If $\mu$ and $\nu$ are measures, then $\nu\lesssim_\times\mu$ means that there exists a constant $C > 0$ such that $\nu\leq C\mu$.
\end{convention}

\begin{convention}
In this paper, all measures and sets are assumed to be Borel, and measures are assumed to be locally finite.
\end{convention}

\begin{convention}
The symbol $\triangleleft$ will be used to indicate the end of a nested proof.
\end{convention}

\begin{figure}[h!]
\begin{tabular}{ll}
\hline 
$\omega(\xx)$ & The \emph{exponent of irrationality} of $\xx\in\R^d$ defined as\\
& \hspace{0.1 in} $\omega(\xx) \df \limsup \left\{ \frac{-\log\|\xx - \pp/q\|}{\log(q)} : \pp/q\in\Q^d \right\}$\\
$\omega_\mult(\xx)$ & The \emph{exponent of multiplicative irrationality} of $\xx\in\R^d$ defined as\\
& \hspace{0.1 in} $\omega_\mult(\xx) \df \limsup \left\{ \frac{-\log\prod_{i = 1}^d |x_i - p_i/q|}{\log(q)} : \pp/q\in\Q^d \right\}$\\
$\mathrm{VWA}_d$ & The set of very well approximable vectors in $\R^d$\\
$\mathrm{VWMA}_d$ & The set of very well multiplicatively approximable vectors in $\R^d$\\
$\thickvar S\rho$ & The closed $\rho$-thickening of $S\subset\R^d$~:~ $\thickvar S\rho \df \{x\in\R^d:\dist(x,S) \leq \rho\}$\\
$\thickopenvar S\rho$ & The open $\rho$-thickening of $S\subset\R^d$~:~ $\thickopenvar{S}{\rho} \df \{x\in\R^d:\dist(x,S) < \rho\}$\\
$A\wedge B$ & The minimum of $A$ and $B$\\
$A\vee B$ & The maximum of $A$ and $B$\\
$\Hyp$ & The collection of affine hyperplanes in $\R^d$\\
$\Supp(\mu)$ & The topological support of a measure $\mu$\\
$B(\xx,\rho)$ & The closed ball centered at $\xx \in \R^d$ of radius $\rho>0$\\
$\dist(\yy,\LL)$ & $\dist(\yy,\LL) \df \inf \{ d(\yy,\xx) : \xx \in \LL \}$\\
$\|d_\LL\|_{\mu,B}$ & For a hyperplane $\LL \in \HH$ and a ball $B$ centered at $\Supp(\mu)$\\ 
& \hspace{0.1 in} $\|d_\LL\|_{\mu,B} \df \sup\{\dist(\yy,\LL) : \yy\in B\cap\Supp(\mu)\}$\\
$\MM_\bothdim$ & The set of $M \times N$ matrices with real entries\\
$\omega(\bfA)$ & The \emph{exponent of irrationality} of $\bfA\in\MM$ defined as\\
& \hspace{0.1 in} $\omega(\bfA) \df \limsup \left\{ \frac{-\log\|\bfA\qq - \pp\|}{\log\|\qq\|} : \qq\in\Z^\qdim\butnot\{\0\} , \pp\in\Z^\pdim \right\}$\\
$\omega_\mult(\bfA)$ & The \emph{exponent of multiplicative irrationality} of $\bfA\in\MM$ defined as\\
& \hspace{0.1 in} $\omega_\mult(\bfA) \df \limsup \left\{ \frac{-\log\prod_{i = 1}^\pdim |(\bfA\qq - \pp)_i|}{\log \prod_{j = 1}^\qdim |q_j|\vee 1} : \qq\in\Z^\qdim\butnot\{\0\} , \pp\in\Z^\pdim \right\}$\\
$\mathrm{VWA}_{\bothdim}$ & The set of very well approximable $\pdim\times \qdim$ matrices in $\MM$\\
$\mathrm{VWMA}_{\bothdim}$ & The set of very well multiplicatively approximable  $\pdim\times \qdim$ matrices in $\MM$\\
$\|f\|_B$ & $\|f\|_B \df \sup \{ \|f(\xx)\| : \xx\in B \}$\\
$\|f\|_{\CC^\epsilon,B}$ & $\|f\|_{\CC^\epsilon} \df \sup \left\{ \frac{\|f(\yy) - f(\xx)\|}{\|\yy - \xx\|^\epsilon} : \xx,\yy\in B \right\}$ for $f:B\to\R$ a function of class $\CC^{\ell + \epsilon}$\\
$\Delta,\|f\|,\|f\|_{\CC^\epsilon}$ & $\Delta \df B(\0,1)$, $\|f\| \df \|f\|_\Delta$, $\|f\|_{\CC^\epsilon} \df \|f\|_{\CC^\epsilon,\Delta}$\\
$\Lambda_0$ & $\Lambda_0 \df \Z^{\pdim + \qdim} \subseteq \R^{\pdim + \qdim}$\\
$u_\bfA, g_\tt$ & See Section \ref{subsectioncorrespondence}\\
$\mfa$ & $\mfa \df \{\tt\in\R^{\pdim + \qdim} : \sum t_i = 0\}$\\
$\mfa_+, \mfa_+ ^*$ & See Section \ref{subsectioncorrespondence}\\
$\omega(\bfA;\SS,\norm)$ & See Section \ref{subsectioncorrespondence}\\
$\Delta(\Lambda)$ & Given a lattice $\Lambda \subseteq \R^{\pdim + \qdim}$, $\Delta(\Lambda) \df -\log\min\big\{\|\rr\| : \rr\in\Lambda\butnot\{\0\}\big\}$\\
\hline 
\end{tabular}
\end{figure}

\vskip13pt
{\bf Acknowledgements.} The authors thank Barak Weiss for helpful comments. The first-named author was supported in part by a 2014-2015 Faculty Research Grant from the University of Wisconsin--La Crosse. The second-named author was supported in part by the Simons Foundation grant \#245708. The third-named author was supported in part by the EPSRC Programme Grant EP/J018260/1. The fourth-named author was supported in part by the NSF grant DMS-1361677. The authors thank an anonynmous referee for helpful comments.

\subsection{Four conditions which imply strong extremality}
\label{subsectionfourconditions}
We begin by recalling the definitions of friendly and absolutely friendly measures, in order to compare these definitions with our new definitions of quasi-decaying and weakly quasi-decaying measures. The definitions given below are easily seen to be equivalent to KLW's original definitions in \cite{KLW}.
\begin{definition}
\label{definitionabsolutelyfriendly}
Let $\mu$ be a measure on an open set $U\subset\R^d$, and let $\Supp(\mu)$ denote the topological support of $\mu$.
\begin{itemize}
\item $\mu$ is called \emph{absolutely decaying (resp. decaying)} if there exist $C_1,\alpha > 0$ such that for all $\xx\in\Supp(\mu)$, $0 < \rho \leq 1$, $\beta > 0$, and $\LL\in\Hyp$, if $B = B(\xx,\rho) \subset U$ then
\begin{align}
\label{absolutelydecaying}
\mu\big(\thickopenvar\LL{\beta\rho}\cap B\big) &\leq C_1 \beta^\alpha\mu(B) & \text{(absolutely decaying)}
\end{align}
or
\begin{align}
\label{decaying}
\mu\big(\thickopenvar\LL{\beta \|d_\LL\|_{\mu,B}}\cap B\big) &\leq C_1 \beta^\alpha\mu(B) & \text{(decaying)},
\end{align}
respectively, where
\[
\|d_\LL\|_{\mu,B} := \sup\{\dist(\yy,\LL):\yy\in B\cap\Supp(\mu)\}.
\]
\item $\mu$ is called \emph{nonplanar} if $\mu(\LL) = 0$ for all $\LL\in\Hyp$. Note that every absolutely decaying measure is nonplanar. Moreover, the decaying and nonplanarity conditions can be combined notationally by using closed thickenings rather than open ones: a measure $\mu$ is decaying and nonplanar if and only if there exist $C_1,\alpha > 0$ such that for all $\xx\in\Supp(\mu)$, $0 < \rho \leq 1$, $\beta > 0$, and $\LL\in\Hyp$, if $B = B(\xx,\rho) \subset U$ then
\begin{align}
\label{decayingprime}
\mu\big(\thickvar\LL{\beta \|d_\LL\|_{\mu,B}}\cap B\big) &\leq C_1 \beta^\alpha\mu(B). & \text{(decaying and nonplanar)}
\end{align}
\item $\mu$ is called \emph{Federer} (or \emph{doubling}) if for some (equiv. for all) $K > 1$, there exists $C_2 > 0$ such that for all $\xx\in\Supp(\mu)$ and $0 < \rho\leq 1$, if $B(\xx,K\rho)\subset U$ then
\begin{equation}
\label{federer}
\mu\big(B(\xx,K\rho)\big) \leq C_2\mu\big(B(\xx,\rho)\big).
\end{equation}
\end{itemize}
If $\mu$ is Federer, decaying, and nonplanar, then $\mu$ is called \emph{friendly}; if $\mu$ is both absolutely decaying and Federer, then $\mu$ is called \emph{absolutely friendly}.\Footnote{As KLW put it, the word ``friendly'' is ``a somewhat fuzzy abbreviation of \emph{Federer, nonplanar, and decaying}''.} When the open set $U$ is not explicitly mentioned, we assume that it is all of $\R^d$; otherwise we say that $\mu$ is absolutely decaying, friendly, etc. ``relative to $U$''.
\end{definition}
The main relations between friendly and absolutely friendly measures are as follows:
\begin{itemize}
\item[(i)] every absolutely friendly measure is friendly;
\item[(ii)] the Lebesgue measure of a nondegenerate submanifold of $\R^d$ (see Definition \ref{definitionnondegenerate} for the definition) is friendly but not absolutely friendly;
\item[(iii)] \cite[Theorem 2.1]{KLW} more generally, the image of an absolutely friendly measure under a nondegenerate embedding is friendly.
\end{itemize}
The main result of \cite{KLW} states that every friendly measure is strongly extremal; together with (ii), this provides a proof of Sprind\v zuk's conjecture.

The distinction between friendly and absolutely friendly measures is a fundamental part of the theory; for example, (iii) would be false if we replaced the hypothesis ``absolutely friendly'' by ``friendly''. So any good generalization of friendliness should also respect the ``friendliness-type condition/absolute friendliness-type condition'' distinction. Thus we will define two versions of the quasi-decay condition, one to correspond with friendliness and the other to correspond with absolute friendliness. Since, in our experience, the ``absolute'' versions of these conditions are more fundamental than the ``non-absolute'' versions, we call our condition which corresponds to absolute friendliness the ``quasi-decay'' condition and we call our condition which corresponds to friendliness the ``weak quasi-decay'' condition.

\begin{definition}
\label{definitionQD}
Let $\mu$ be a measure on $\R^d$ and consider $\xx\in \borel\subset\R^d$. We will say that $\mu$ is \emph{quasi-decaying (resp. weakly quasi-decaying) at $\xx$ relative to $\borel$} if for all $\gamma > 0$, there exist $C_1,\alpha > 0$ such that for all $0 < \rho \leq 1$, $0 < \beta \leq \rho^\gamma$, and $\LL\in\Hyp$, if $B = B(\xx,\rho)$ then
\begin{align}
\label{QDwithE}
\mu\left(\thickvar{\LL}{\beta\rho}\cap B\cap \borel\right) &\leq C_1 \beta^\alpha \mu(B) & \text{(quasi-decaying)}
\end{align}
or
\begin{align}
\label{weakQDwithE}
\mu\left(\thickvar{\LL}{\beta\|\dist_\LL\|_{\mu,B}}\cap B\cap \borel\right) &\leq C_1 \beta^\alpha \mu(B) & \text{(weakly quasi-decaying)},
\end{align}
respectively. We will say that $\mu$ is \emph{(weakly) quasi-decaying relative to $\borel$} if for $\mu$-a.e. $\xx\in \borel$, $\mu$ is (weakly) quasi-decaying at $\xx$ relative to $\borel$. Finally, we will say that $\mu$ is \emph{(weakly) quasi-decaying} if there exists a sequence $(\borel_\Index)_\Index$ such that $\mu\left(\R^d\butnot\bigcup_\Index \borel_\Index\right) = 0$ and for each $\Index$, $\mu$ is (weakly) quasi-decaying relative to $\borel_\Index$.
\end{definition}

Let us briefly discuss several aspects of Definition \ref{definitionQD} which differ from Definition \ref{definitionabsolutelyfriendly}:
\begin{itemize}
\item The uniform dependence of the constants $C_1$ and $\alpha$ on the point $\xx$ has been dropped. Moreover, the condition is only required to hold for $\mu$-a.e. every $\xx$, rather than for all $\xx$ in the support of $\mu$. This makes the quasi-decay conditions closer to the ``non-uniform'' versions of friendliness considered in \cite[\66]{KLW}. By itself, this relaxation does not seem to give any natural new examples of measures satisfying the condition, until it is combined with the other relaxations considered below.
\item The left-hand sides of \eqref{QDwithE} and \eqref{weakQDwithE} include an intersection with a set $\borel$ which has large but not full measure with respect to $\mu$. This change is done for two reasons:
\begin{itemize}
\item It makes quasi-decay into a measure class invariant. Note that the relaxation of uniformity is not itself enough to make the condition a measure class invariant (see Theorem \ref{theoremmeasureclass} below).
\item Sequences $(\borel_\Index)_\Index$ with the property described above often show up naturally in our proofs (see e.g. Theorem \ref{theoremexactdim}).
\end{itemize}
\item The inequalities \eqref{QDwithE} and \eqref{weakQDwithE} are only required to hold for $0 < \beta \leq \rho^\gamma$, rather than for all $\beta > 0$. This is probably the most unexpected aspect of our definition. It means that as the balls $B = B(\xx,\rho)$ get smaller, the thicknesses of hyperplane-neighborhoods whose measures can be bounded in terms of $\mu(B)$ get smaller not only in an absolute sense, but also relative to the radius $\rho$.
\item The Federer (doubling) condition has been dropped. The reason for this is that there is an analogue of the Federer condition (Lemma \ref{lemmaquasifederer}) which is good enough for our purposes and which holds for every measure on every doubling metric space, and in particular for every measure on $\R^d$.
\item The nonplanarity condition has been incorporated directly into the definition of weak quasi-decay by using closed thickenings rather than open ones. This difference is mathematically insignificant, but it is a notational convenience.
\end{itemize}

It is obvious that the following implications hold:
\begin{center}
\begin{tabular}{|ccc|}
\hline
Absolutely friendly & \implies & Friendly\\
$\Downarrow$ & & $\Downarrow$\\
Quasi-decaying & \implies & Weakly quasi-decaying\\
\hline
\end{tabular}
\end{center}
(The strictness of these implications is shown by examples in \cite[Figure 1]{DFSU_GE2}.) Moreover, the appropriate analogues of the friendliness/absolute friendliness relations hold:
\begin{itemize}
\item[(i)] every quasi-decaying measure is weakly quasi-decaying;
\item[(ii)] the Lebesgue measure of a nondegenerate submanifold of $\R^d$ is weakly quasi-decaying but not quasi-decaying;
\item[(iii)] more generally, the image of a quasi-decaying measure under a nondegenerate embedding is weakly quasi-decaying; more precisely:
\end{itemize}
\begin{theorem}[Proven in Section \ref{sectionbasic}]
\label{theoremQDembedding}
For all $\ell\in\N$ and $\epsilon > 0$, the image of a quasi-decaying measure under an $\ell$-nondegenerate embedding of class $\CC^{\ell + \epsilon}$ is weakly quasi-decaying.
\end{theorem}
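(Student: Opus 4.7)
The plan is to mimic the classical Kleinbock--Margulis and KLW strategy, which reduces the extremality-type estimate for the pushforward $\nu \df f_*\mu$ to a $(C,\alpha)$-good property for affine functions composed with $f$, adapted here to the quasi-decay setting with its characteristic $\beta \leq \rho^\gamma$ restriction. Fix a $\mu$-generic $\xx_0 \in \borel$ and set $\yy_0 \df f(\xx_0)$. Given a ball $B = B(\yy_0, \rho)$, a hyperplane $\LL \in \Hyp$ with defining affine functional $L$ of unit operator norm, and $r_\LL \df \|\dist_\LL\|_{\nu, B}$, I would begin by rewriting
\[
\nu\bigl(\thickvar{\LL}{\beta r_\LL} \cap B \cap f(\borel)\bigr) = \mu\bigl(\{z \in f^{-1}(B) \cap \borel : |g(z)| \leq \beta r_\LL\}\bigr),
\]
where $g \df L \circ f$. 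Since $f$ is a $\CC^1$ embedding it is locally bi-Lipschitz near $\xx_0$, so $f^{-1}(B)$ is sandwiched between two Euclidean balls around $\xx_0$ of comparable radii. The task therefore reduces to proving a $(C,\alpha)$-good estimate of the form
\[
\mu\bigl(\{z \in B(\xx_0, \rho') \cap \borel : |g(z)| \leq \eta\}\bigr) \lesssim_\times (\eta/\sigma)^\alpha \mu(B(\xx_0, \rho'))
\]
with $\sigma \df \sup\{|g(z)| : z \in B(\xx_0, \rho') \cap \Supp(\mu)\}$ and $\sigma \gtrsim r_\LL$ (the latter because some subball of $B(\xx_0, \rho')$ maps under $f$ into the portion of $B$ realising the supremum defining $r_\LL$), then combined with a Federer-type comparison $\mu(B(\xx_0, \rho')) \lesssim_\times \nu(B)$ furnished by Lemma \ref{lemmaquasifederer}.

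The $(C,\alpha)$-good bound on $g$ itself is built up in two further stages. First, quasi-decay of $\mu$ at $\xx_0$ relative to $\borel$ is essentially the $(C,\alpha)$-good property for affine functionals (subject to the $\rho^\gamma$-restriction). Second, one upgrades it to polynomials of degree at most $\ell$ via the standard KLW slicing argument, which costs a factor $1/\ell$ in the exponent but preserves the shape of the inequality. At the final stage, one invokes $\ell$-nondegeneracy of $f$ together with its $\CC^{\ell+\epsilon}$-regularity to Taylor-expand $g$ at $\xx_0$ as $g(z) = P_{L,\xx_0}(z - \xx_0) + R_L(z)$, where $P_{L,\xx_0}$ is a polynomial of degree at most $\ell$ whose largest coefficient is uniformly bounded below as $L$ varies over unit affine functionals (by compactness modulo constants, combined with nondegeneracy), and $R_L(z) = O(|z-\xx_0|^{\ell+\epsilon})$. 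Provided $\eta \gtrsim \rho^{\ell+\epsilon}$, the remainder is absorbed into $|P_{L,\xx_0}|$ and the polynomial-level estimate transfers to $g$.

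The main obstacle I anticipate is the bookkeeping of the parameter $\gamma$ across these three successive reductions (affine $\to$ polynomial $\to$ smooth-plus-nondegenerate), together with the change of scale between source and target and the substitution $\eta = \beta r_\LL$. One must verify that the target constraint $\beta \leq \rho^\gamma$ is achievable after cascading the $\rho^{\gamma'}$-restrictions at each intermediate step, and in particular that the lower bound $\eta \gtrsim \rho^{\ell+\epsilon}$ required to discard the Taylor remainder is compatible with the weakly quasi-decaying regime. A secondary but delicate point is the Federer-type comparison $\mu(B(\xx_0, C\rho)) \lesssim_\times \mu(B(\xx_0, c\rho))$: Lemma \ref{lemmaquasifederer} presumably yields this only along a full-measure set of scales or with a small controlled loss, so one must verify that this suffices for the application, for instance by restricting attention to a typical subsequence of scales.
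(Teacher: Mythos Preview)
Your outline is essentially the paper's strategy, packaged differently. The paper reduces exactly as you do to bounding $\mu_1\{|g|\leq\beta\|g\|\}$ for $g=L\circ f$ on a rescaled unit ball. The difference is that instead of phrasing the polynomial step as a $(C,\alpha)$-good property for $\mu_1$, the paper isolates a purely geometric, \emph{measure-free} covering lemma (Lemma~\ref{lemmaZPbeta}): for any $\CC^{\ell+\epsilon}$ function $h$ on the unit ball with $\|h^{(\ell)}\|_{\CC^\epsilon}\leq\kappa_\ell\|h\|$, the sublevel set $\{|h|\leq\beta\|h\|\}$ is covered by $\ell$ collections $\CC_k$ of hyperplane-slabs $\thickvar{\LL_j}{\beta_k^{1+\epsilon/2}}\cap B(\pp_j,\beta_k)$ with $\beta_k=\beta^{1/2^{2k-1}}$ and the centers $(\pp_j)_{j\in J_k}$ a $\beta_k$-separated set. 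The induction on $\ell$ passes to a partial derivative $\partial_i h$ exactly as you suggest; the $\CC^{\ell+\epsilon}$ hypothesis combined with $\ell$-nondegeneracy is what verifies the normalised bound $\|h^{(\ell)}\|_{\CC^\epsilon}\lesssim\kappa_\ell\|h\|$ after rescaling, subsuming your Taylor-remainder step into the same lemma. Uniform quasi-decay of $\mu_1$ is then applied to each slab (the explicit $\beta_k$-hierarchy does your ``$\gamma$-bookkeeping'' in one stroke: one checks $\beta_k^{\epsilon/2}\lesssim\rho_k^{\gamma'}$ for an explicit $\gamma'$), and the sum over $j\in J_k$ is controlled by \emph{bounded multiplicity}, thanks to the separation of the centers. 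This is what replaces Federer in the inductive summation you would otherwise need for the ``standard KLW slicing argument''---and is the one point your sketch leaves genuinely implicit.

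One correction to your last paragraph: Lemma~\ref{lemmaquasifederer} does not restrict to a subsequence of scales. It yields $\mu(B(\xx,\rho^{1-\delta}))\leq C_2\rho^{-\epsilon}\mu(B(\xx,\rho))$ for \emph{all} $0<\rho\leq1$ at $\mu$-a.e.\ $\xx$, with a controlled power loss $\rho^{-\epsilon}$. The paper invokes it only once, at the very end of Proposition~\ref{propositionQDembedding}, to pass from $\mu_2(B(\xx_2,2C_2\rho))$ to $\mu_2(B(\xx_2,\rho))$, absorbing the $\rho^{-\epsilon}$ into the exponent on $\beta$ via $\beta\leq\rho^\gamma$.
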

In relation to extremality, we shall prove that every weakly quasi-decaying measure is strongly extremal (Corollary \ref{corollaryKLW}), thus generalizing the main result of \cite{KLW} and in particular providing a third proof of Sprind\v zuk's conjecture. This implication also proves a conjecture of KLW \cite[\610.5]{KLW} that nonplanar and decaying measures are strongly extremal, i.e. that the Federer condition is unnecessary in their main theorem. Although the proof of this result uses essentially the full machinery of the existing proofs of Sprind\v zuk's conjecture \cite{KleinbockMargulis,KLW}, it is worth noting that the following result (which does not imply Sprind\v zuk's conjecture) can be proven using only elementary real analysis together with the Simplex Lemma:
\begin{theorem}[Proven in Section \ref{sectionQDE}]
\label{theoremQDE}
Every quasi-decaying measure is extremal.
\end{theorem}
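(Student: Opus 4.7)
The plan is to combine a Borel--Cantelli argument with the classical Simplex Lemma, applying the quasi-decay inequality at exactly the dyadic scale dictated by the Simplex Lemma.

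Since $\VWA_d = \bigcup_{n \in \N} W_{1+1/d+1/n}$, where
\[
W_\tau \df \bigl\{\xx \in \R^d : \|\xx - \pp/q\| \leq q^{-\tau} \text{ for infinitely many } \pp/q \in \Q^d\bigr\},
\]
it suffices to show $\mu(W_\tau) = 0$ for each fixed $\tau > 1 + 1/d$. Using the defining sequence $(\borel_\Index)_\Index$ from Definition \ref{definitionQD} together with a routine measurable refinement (pass to subsets where the quasi-decay constants for a prescribed $\gamma$ are uniform), the task further reduces to showing $\mu(W_\tau \cap \borel \cap B) = 0$ for a fixed bounded ball $B$ and a fixed set $\borel$ on which quasi-decay holds at every point with one common pair $(C_1, \alpha)$ of constants.

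Decompose $W_\tau = \limsup_k V_k$, where $V_k \df \bigcup\{B(\pp/q, q^{-\tau}) : 2^k \leq q < 2^{k+1},\, \pp \in \Z^d\}$; by Borel--Cantelli it suffices to prove $\sum_k \mu(V_k \cap \borel \cap B) < \infty$. Set $\rho_k \df c \cdot 2^{-k(1+1/d)}$ for a small absolute constant $c > 0$. The Simplex Lemma then ensures that for every $\yy$, all rationals $\pp/q$ with $q < 2^{k+1}$ lying in $B(\yy, 2\rho_k)$ lie on a single affine hyperplane $\LL_\yy$; since $2^{-k\tau} \ll \rho_k$ for $k$ large, this gives $V_k \cap B(\yy, \rho_k) \subset \thickvar{\LL_\yy}{2^{-k\tau}} \cap B(\yy, \rho_k)$. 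Choose $\gamma > 0$ satisfying $\gamma < \tau d/(d+1) - 1$, which is positive precisely because $\tau > 1 + 1/d$, and let $(C_1, \alpha)$ be the corresponding quasi-decay constants on $\borel$. For $\yy \in \borel$, the relative thickness $\beta_k \df 2^{-k\tau}/\rho_k = c^{-1} 2^{-k(\tau - 1 - 1/d)}$ satisfies $\beta_k \leq \rho_k^\gamma$ for all sufficiently large $k$, so quasi-decay at $\yy$ yields
\[
\mu\big(V_k \cap B(\yy, \rho_k) \cap \borel\big) \leq C_1 \beta_k^\alpha \mu\big(B(\yy, \rho_k)\big) \lesssim_\times 2^{-k \alpha(\tau - 1 - 1/d)} \mu\big(B(\yy, \rho_k)\big).
\]
Covering $B \cap \borel$ by balls $\{B(\yy_j, \rho_k)\}_j$ with $\yy_j \in \borel$ via the Besicovitch covering theorem (bounded overlap constant depending only on $d$), then summing over $j$, yields $\mu(V_k \cap \borel \cap B) \lesssim_\times 2^{-k\alpha(\tau - 1 - 1/d)} \mu(2B)$, which is summable since $\alpha(\tau - 1 - 1/d) > 0$.

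The main obstacle is matching scales so that both tools apply simultaneously. The Simplex Lemma forces the cover radius to $\rho_k \asymp 2^{-k(1+1/d)}$; the approximation thickness $2^{-k\tau}$ then fixes the relative thickness $\beta_k$. The quasi-decay inequality is available only when $\beta_k \leq \rho_k^\gamma$, which pins $\gamma$ to the interval $(0, \tau d/(d+1) - 1)$. This interval is nonempty precisely for $\tau > 1 + 1/d$---the very threshold defining $\VWA_d$---and the ability to take $\gamma$ arbitrarily small (built into Definition \ref{definitionQD}) is exactly what makes the argument succeed; without this flexibility, the matching would be impossible. The measurable reduction to $\borel$ with uniform constants and the Besicovitch covering step are routine (the latter crucially does not require a Federer-type hypothesis on $\mu$).
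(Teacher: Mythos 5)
Your argument is correct and is essentially the paper's own proof: both reduce to a set $\borel$ on which the quasi-decay constants are uniform, use the Simplex Lemma at the scales $\rho_k \asymp 2^{-k(1+1/d)}$ forced by it to trap the relevant rationals in a single hyperplane per ball, verify that the resulting relative thickness satisfies $\beta \leq \rho^\gamma$ for a suitable small $\gamma > 0$ (which is exactly where $\tau > 1+1/d$ enters), and conclude by a bounded-multiplicity covering plus Borel--Cantelli. The only cosmetic differences are your parametrization by $\tau$ rather than by the exponent excess $\gamma$ itself, and your use of Besicovitch in place of a maximal $\rho_k$-separated set.
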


The idea of proving the extremality of measures using the Simplex Lemma is due to A. D. Pollington and S. L. Velani \cite[Theorem 1]{PollingtonVelani}. Proving Theorem \ref{theoremQDE} was a key step in our construction of the definition of the quasi-decay condition, since it allowed us to see what the minimal hypotheses on the measure were such that the proof would work. It was only later that we realized the Sprind\v zuk conjecture machinery developed in \cite{KleinbockMargulis, KLW} would work for our measures as well.

\subsection{Ahlfors regularity vs. exact dimensionality}
One way of thinking about the difference between KLW's conditions and our conditions is by comparing this difference with the difference between the classes of \emph{Ahlfors regular} and \emph{exact dimensional} measures, both of which are well-studied in dynamics (for more details see \cite{DFSU_GE2}). We recall their definitions:

\begin{definition*}
A measure $\mu$ on $\R^d$ is called \emph{Ahlfors $\delta$-regular} if there exists $C > 0$ such that for every ball $B(\xx,\rho)$ with $\xx\in\Supp(\mu)$ and $0 < \rho \leq 1$.
\[
C^{-1} \rho^\delta \leq \mu\big(B(\xx,\rho)\big) \leq C \rho^\delta.
\]
The measure $\mu$ is called \emph{exact dimensional of dimension $\delta$} if for $\mu$-a.e. $\xx\in\R^d$,
\begin{equation}
\label{exactdim}
\lim_{\rho\to 0} \frac{\log\mu\big(B(\xx,\rho)\big)}{\log\rho} = \delta.
\end{equation}
\end{definition*}

Every Ahlfors $\delta$-regular measure is exact dimensional of dimension $\delta$. The Hausdorff and packing dimensions of an exact dimensional measure of dimension $\delta$ are both equal to $\delta$ \cite[Theorem 4.4]{Young2}; for an Ahlfors $\delta$-regular measure, the Hausdorff, packing, and upper and lower Minkowski (box-counting) dimensions of the topological support are also equal to $\delta$. There are many dynamical examples of Ahlfors regular measures; there are also many examples of exact dimensional measures which are not Ahlfors regular. In Part II, the latter class of examples will prove to be a fruitful source of quasi-decaying measures which are not friendly.

The philosophical relations between Ahlfors regularity and exact dimensionality with absolute friendliness and quasi-decay, respectively, are:
\begin{equation}
\label{philosophical}
\begin{split}
\text{Ahlfors regular and ``nonplanar''} \;\; &\Rightarrow \;\; \text{Absolutely friendly}\\
\text{Exact dimensional and ``nonplanar''} \;\; &\Rightarrow \;\; \text{Quasi-decaying}
\end{split}
\end{equation}
Here ``nonplanar'' does not refer to nonplanarity as defined in Definition \ref{definitionabsolutelyfriendly}, but is rather something less precise (and stronger). This less precise definition should rule out examples like the Lebesgue measures of nondegenerate manifolds, since these are not quasi-decaying. One example of a ``sufficient condition'' for this imprecise notion of ``nonplanarity'' is simply the inequality $\delta > d - 1$, where $\delta$ is the dimension of the measure in question. In particular, in this context the relations \eqref{philosophical} are made precise by the following theorems:

\begin{theorem*}[{\cite[Proposition 6.3]{KleinbockWeiss1}}; cf. \cite{PollingtonVelani,Urbanski3}]
If $\delta > d - 1$, then every Ahlfors $\delta$-regular measure on $\R^d$ is absolutely friendly.
\end{theorem*}

\begin{theorem}[Proven in Section \ref{sectionQDE}]
\label{theoremexactdim}
If $\delta > d - 1$, then every exact dimensional measure on $\R^d$ of dimension $\delta$ is quasi-decaying.
\end{theorem}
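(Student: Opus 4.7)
The plan is to first use Egorov's theorem to replace the pointwise exact-dimensional limit by uniform two-sided bounds on sets $E_n$ exhausting $\R^d$, and then to bound the measure of a thin slab by a standard volumetric covering argument that crucially exploits $\delta > d-1$. Setting $g_k(x) := \log\mu(B(x,2^{-k}))/\log 2^{-k}$, the exact-dimensional hypothesis gives $g_k\to\delta$ pointwise $\mu$-a.e. Applying Egorov's theorem within each ball $B(\0,R)$ (where $\mu$ is finite) and diagonalising over $R$ yields a sequence $(E_n)$ of measurable sets with $\mu(\R^d\setminus\bigcup_n E_n) = 0$ on each of which the convergence is uniform. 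Interpolating the uniform bound between dyadic scales by monotonicity of $\rho\mapsto\mu(B(x,\rho))$ produces, for each $n$ and each $\epsilon>0$, a threshold $\rho_n(\epsilon)>0$ (with $\rho_n(\epsilon)\downarrow 0$ as $\epsilon\downarrow 0$) such that
\[
\rho^{\delta+\epsilon} \lesssim_\times \mu(B(y,\rho)) \lesssim_\times \rho^{\delta-\epsilon}
\quad\text{for all } y\in E_n,\ 0<\rho\le\rho_n(\epsilon).
\]

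Next, fix $n$, $x\in E_n$, and $\gamma>0$. Since $\delta+1-d>0$, pick $\epsilon=\epsilon(\gamma)>0$ with $\alpha := \delta+1-d-\epsilon(1+2/\gamma) > 0$ and set $\rho_* := \rho_n(\epsilon)$. Given $\LL\in\Hyp$, $0<\rho\le 1$, and $0<\beta\le\rho^\gamma$, write $S := \thickvar{\LL}{\beta\rho}\cap B(x,\rho)\cap E_n$. Take a maximal disjoint family $\{B(y_i,\beta\rho)\}_{i=1}^N$ with centers $y_i\in S$: these pack into $\thickvar{\LL}{2\beta\rho}\cap B(x,2\rho)$ of Lebesgue volume $\asymp_\times\beta\rho^d$, so $N\lesssim_\times\beta^{1-d}$, and the doubled balls $B(y_i,2\beta\rho)$ cover $S$. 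In the principal range $\rho\le\rho_*/2$ (so $2\beta\rho\le\rho_*$, using $\beta\le 1$), the uniform bound yields $\mu(B(y_i,2\beta\rho))\lesssim_\times(\beta\rho)^{\delta-\epsilon}$ and $\mu(B(x,\rho))\gtrsim_\times\rho^{\delta+\epsilon}$; combined with $\beta\le\rho^\gamma$ (so $\rho^{-2\epsilon}\le\beta^{-2\epsilon/\gamma}$) this gives
\[
\frac{\mu(S)}{\mu(B(x,\rho))} \lesssim_\times \beta^{1-d}(\beta\rho)^{\delta-\epsilon}\rho^{-(\delta+\epsilon)} = \beta^{1+\delta-\epsilon-d}\rho^{-2\epsilon} \le \beta^\alpha.
\]

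The residual range $\rho>\rho_*/2$ is absorbed into the constant $C_1(x,\gamma)$: if $\beta\ge\rho_*/2$ then $\beta^{-\alpha}$ is bounded and the trivial estimate $\mu(S)\le\mu(B(x,\rho))$ suffices, while if $\beta<\rho_*/2$ we still have $2\beta\rho<\rho_*$ so the same covering bound delivers $\mu(S)\lesssim_\times\beta^{1+\delta-\epsilon-d}$, and $\mu(B(x,\rho))\ge\mu(B(x,\rho_*/2))>0$ is a fixed positive constant, while $1+\delta-\epsilon-d>\alpha$ together with $\beta\le 1$ forces $\beta^{1+\delta-\epsilon-d}\le\beta^\alpha$. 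The principal obstacle I foresee is arranging that a \emph{single} set $E_n$ serves every $\gamma>0$ at once: this is precisely what the Egorov step accomplishes by supplying the two-sided bound for all $\epsilon>0$ simultaneously via the varying threshold $\rho_n(\epsilon)$, rather than fixing $\epsilon$ at the outset (which would cripple the small-$\gamma$ regime, where the allowed slabs $\beta\le\rho^\gamma$ are thickest).
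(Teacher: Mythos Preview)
Your proof is correct and follows essentially the same route as the paper: Egorov to obtain sets $E_n$ on which the exact-dimensional limit is uniform, then a volumetric packing/covering argument in the slab $\thickvar{\LL}{\beta\rho}\cap B(x,\rho)$ yielding $N\lesssim_\times\beta^{1-d}$, combined with the two-sided scale bounds and the relation $\rho\ge\beta^{1/\gamma}$ to produce the decay exponent. Your treatment is in fact slightly more careful than the paper's in handling the residual range $\rho>\rho_*/2$ explicitly, and your closing remark about needing a single $E_n$ to work for all $\gamma$ (hence applying Egorov to the full sequence $g_k$ rather than fixing $\epsilon$ first) is exactly the right point.
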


\subsection{Further comparison of KLW's conditions vs. our conditions}
There are three axes on which we can compare our conditions against KLW's: Diophantine properties of measures satisfying the condition, examples of measures satisfying the condition, and stability properties. We deal with the first of these in \6\ref{subsectionadditional} below, and the second will be discussed at length in Part II. It remains to consider stability properties. The following proposition describes the stability properties of quasi-decaying and weakly quasi-decaying measures:

\begin{theorem}[Proven in Section \ref{sectionbasic}]
\label{theoremstabilityQD}~
\begin{itemize}
\item[(i)] The (weak) quasi-decay property does not depend on which norm $\|\cdot\|$ on $\R^d$ is used in Definition \ref{definitionQD}.
\item[(ii)] The product of any two (weakly) quasi-decaying measures is (weakly) quasi-decaying.
\item[(iii)] For all $\epsilon > 0$, the image of a quasi-decaying measure under a $\CC^{1 + \epsilon}$ diffeomorphism is quasi-decaying.
\item[(iv)] If $(U_i)_i$ is an open cover of $\R^d$, then $\mu$ is (weakly) quasi-decaying if and only if for each $i$, $\mu\given U_i$ is (weakly) quasi-decaying.
\item[(v)] Any measure absolutely continuous with respect to a (weakly) quasi-decaying measure is (weakly) quasi-decaying.
\end{itemize}
\end{theorem}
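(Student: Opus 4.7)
Items (i), (iv), and (v) are essentially bookkeeping. For (i), any two norms on $\R^d$ are bi-Lipschitz equivalent, so both balls and hyperplane $\beta\rho$-thickenings change by only a bounded factor under a change of norm; this is absorbed into the constant $C_1$ and a slightly smaller choice of $\gamma$. For (iv), take quasi-decay witnesses $\{\borel_\Index^{(i)}\}_\Index$ for each $\mu\given U_i$; then $\{\borel_\Index^{(i)}\cap U_i\}_{\Index,i}$ exhausts $\mu$-a.e.\ point, and the inequality for $\mu\given U_i$ transfers verbatim because $\mu\given U_i(B(x,\rho)) \leq \mu(B(x,\rho))$ and the auxiliary set lives in $U_i$; the converse (restricting a global $\borel_\Index$ to $U_i$) is symmetric. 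For (v), write $\nu = g\mu$, decompose by level sets $\borel_\Index = g^{-1}([1/\Index,\Index])$, and refine each $\borel_\Index$ using the Lebesgue differentiation theorem for Radon measures on $\R^d$ so that $\mu(\borel_\Index\cap B(x,\rho)) \geq \tfrac12 \mu(B(x,\rho))$ for all small $\rho$; on such a refinement $\mu$ and $\nu$ are multiplicatively comparable, and quasi-decay of $\mu$ yields quasi-decay of $\nu$ by substituting the comparison on both sides.

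For (ii), given $\mu$ on $\R^{d_1}$ and $\nu$ on $\R^{d_2}$ both (weakly) quasi-decaying, a ball $B((x_0,y_0),\rho)$, and an affine hyperplane $L$ with unit normal $(n_1,n_2)$, assume without loss of generality that $\|n_1\|\geq 1/\sqrt{2}$. Each horizontal slice $\{x : (x,y) \in \thickvar{L}{\beta\rho}\}$ is a $C\beta\rho$-thickening of an affine hyperplane $L_y \subseteq \R^{d_1}$. Integrating via Fubini, applying the quasi-decay of $\mu$ on each slice whose horizontal radius is $\asymp \rho$, and handling the thin boundary annulus in the $y$-coordinate by choosing its thickness as a small positive power of $\beta$ (so its contribution is absorbed into the $\beta^{\alpha^*}$ factor), produces a bound of the form $C\beta^{\alpha^*}\mu(B_1(x_0,\rho))\nu(B_2(y_0,\rho))$. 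The elementary inclusion $B_1(x_0,\rho/\sqrt{2}) \times B_2(y_0,\rho/\sqrt{2}) \subseteq B((x_0,y_0),\rho)$, together with the quasi-Federer property of $\mu$ and $\nu$ (Lemma \ref{lemmaquasifederer}), lets one replace the product of ball measures by $(\mu\times\nu)(B((x_0,y_0),\rho))$ up to a constant.

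The substantive content is (iii). Let $f:U\to V$ be a $\CC^{1+\epsilon}$ diffeomorphism, $y_0 = f(x_0)$, and Taylor expand $f(x) = f(x_0) + Df(x_0)(x-x_0) + R(x)$ with $\|R(x)\| \lesssim \|x-x_0\|^{1+\epsilon}$. This shows that $f^{-1}(\thickvar{L}{\beta\rho}) \cap B(x_0,c\rho)$ is contained in the $C(\beta\rho + \rho^{1+\epsilon})$-thickening of the affine hyperplane $L^* := Df(x_0)^{-1}(L - y_0) + x_0$. When $\beta \geq \rho^\epsilon$ the Taylor error is dominated and the quasi-decay of $\mu$ applies directly with $\beta^* \asymp \beta$. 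The hard case is $\beta < \rho^\epsilon$, where the Taylor error dominates and a naive bound gives only $(\rho^\epsilon)^\alpha$, which is not of the required form $\beta^{\alpha^*}$ since $\beta$ may be arbitrarily small relative to $\rho^\epsilon$. To overcome this I would rescale: cover $f^{-1}(L) \cap B(x_0,c\rho)$ by a bounded-overlap family of balls $\{B(z_i,\rho')\}$ with $\rho' := (\beta\rho)^{1/(1+\epsilon)} \leq \rho$. At this finer scale the Taylor error $(\rho')^{1+\epsilon} = \beta\rho$ matches the image thickness, so the preimage inside each $B(z_i,\rho')$ is an honest $\beta^*\rho'$-thickening of an affine hyperplane with $\beta^* = (\beta\rho)^{\epsilon/(1+\epsilon)} \lesssim (\rho')^\epsilon$. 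Quasi-decay of $\mu$ with parameter $\gamma^* = \epsilon$ therefore applies; summing over the $\asymp (\rho/\rho')^{d-1}$ balls and invoking the quasi-Federer property to compare $\sum_i \mu(B(z_i,\rho'))$ with $\mu(B(x_0,c'\rho)) \asymp f_*\mu(B(y_0,\rho))$ produces the bound with exponent $\alpha^* = \alpha\epsilon/(1+\epsilon)$. Choosing the covering scale correctly and controlling both the Besicovitch-type and quasi-Federer constants is the main technical hurdle.
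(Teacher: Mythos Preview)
Your handling of (iv) and (v) is fine and matches the paper. For (iii), your single-scale linearization---cover at radius $\rho'=(\beta\rho)^{1/(1+\epsilon)}$ so that the $\CC^{1+\epsilon}$ Taylor remainder matches the target thickness, then apply uniform quasi-decay on each small ball and sum by bounded multiplicity---is exactly the mechanism the paper uses. Part (iii) is deferred there to the $\ell=1$ case of Proposition~\ref{propositionQDembedding}, whose engine is Lemma~\ref{lemmaZPbeta}: a covering of the sublevel set of a $\CC^{\ell+\epsilon}$ function by hyperplane slabs at intermediate scales $\beta_k=\beta^{1/2^{2k-1}}$. For $\ell=1$ this is a single collection, and the only difference from your argument is the particular choice of intermediate scale.

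There are two gaps elsewhere. In (i), the thickening and the left-hand ball do change only by bounded factors under a change of norm, but on the \emph{right-hand side} you must compare $\mu(B(\xx,\rho))$ with $\mu$ of a ball of radius $C\rho$, and $\mu(B(\xx,C\rho))\lesssim\mu(B(\xx,\rho))$ is precisely the Federer condition, which you do not have; a ``slightly smaller $\gamma$'' does nothing for this. The paper repairs this with Lemma~\ref{lemmaquasifederer} (quasi-Federer): since $\beta\le\rho^\gamma$, the loss $\rho^{-\epsilon}$ from quasi-Federer is bounded by $\beta^{-\epsilon/\gamma}$ and can be paid for by halving the exponent $\alpha$.

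In (ii), your ``thin boundary annulus'' device is problematic when $d_2>1$: the set $\{\yy:\|\yy-\yy_0\|>\rho(1-\beta^\eta)\}$ is a spherical shell, not a hyperplane neighbourhood, so (weak) quasi-decay of $\nu$ does not directly bound its $\nu$-measure, and without doubling there is no one-line substitute. One can push the argument through by covering the shell by $\asymp\beta^{-\eta(d_2-1)}$ tangent-hyperplane slabs at a further intermediate scale and invoking quasi-decay of $\nu$ plus quasi-Federer again, but this is substantially more than ``absorbed into the $\beta^{\alpha^*}$ factor''. The paper avoids the whole issue by first using (i) to pass to the max norm, so that $B((\xx_0,\yy_0),\rho)=B(\xx_0,\rho)\times B(\yy_0,\rho)$ exactly; every horizontal slice then has full radius $\rho$, Fubini plus quasi-decay on one factor finishes immediately, and no product-ball comparison or annulus is needed at the end.
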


The first two properties are also satisfied for friendliness and absolute friendliness (for (ii) see \cite[Theorem 2.4]{KLW}). Property (iii) is not true for either friendliness or the weakly quasi-decay condition, since the image of the Lebesgue measure of a nondegenerate manifold under a diffeomorphism may be the Lebesgue measure of an affine hyperplane, which does not satisfy any of the four conditions (if the hyperplane is rational it is not even extremal). It is true for the absolute friendliness condition under the additional hypothesis that the measure is compactly supported \cite[Proposition 3.2]{FMS}.

Property (iv) is not true for either friendliness or absolute friendliness, but this can be fixed either by making a more careful statement which involves conditions holding ``relative to'' certain open sets in the sense of Definition \ref{definitionabsolutelyfriendly}, or else by considering ``non-uniform'' versions of the conditions, as is done in \cite[\66]{KLW}. It was hypothesized in \cite[para. after Theorem 6.1]{KLW} that a weak version of property (v) holds for the non-uniform versions of friendliness and absolute friendliness, namely that these conditions are measure class invariants. However, we can now show that this statement is false; see Appendix \ref{appendix1}.

We remark that stability properties (iii) and (iv) imply that it makes sense to talk about quasi-decaying measures on abstract differentiable manifolds, by calling a measure quasi-decaying if it is quasi-decaying on every coordinate chart. The non-uniform version of absolute friendliness can be also considered on manifolds. It doesn't make sense to talk about weakly quasi-decaying or friendly measures on abstract differentiable manifolds due to the failure of property (iii) for these classes.

\subsection{Additional Diophantine properties of quasi-decaying measures}
\label{subsectionadditional}
In addition to being extremal, the Lebesgue measure of a nondegenerate manifold has many other nice Diophantine properties which can also be generalized to weakly quasi-decaying measures. These improvements fall into three categories:
\begin{itemize}
\item those dealing with strong extremality rather than just extremality;
\item those dealing with matrices rather than just vectors;
\item those dealing with measures supported on proper affine subspaces of $\R^d$ (or in the case of matrices, of the space $\EE$ defined below).
\end{itemize}
Let us review the theory of Diophantine approximation of matrices. Fix $\bothdim\in\N$, let $\MM \equiv \MM_\bothdim$\Footnote{Here and elsewhere $A \equiv B$ means ``$A$ is shorthand for $B$''.} denote the set of $\pdim\times \qdim$ matrices, and fix $\bfA\in\MM$. Rather than approximating $\bfA$ by rational matrices, classically one considers ``approximations'' to $\bfA$ to be integer vectors $\qq\in \Z^\qdim\butnot\{\0\}$ whose image under $\bfA$ is close to an integer vector. Thus the \emph{exponent of irrationality} of $\bfA$ is defined as
\[
\omega(\bfA) = \limsup_{\substack{\qq\in\Z^\qdim\butnot\{\0\} \\ \pp\in\Z^\pdim}} \frac{-\log\|\bfA\qq - \pp\|}{\log\|\qq\|},
\]
where the two $\|\cdot\|$s denote any two norms on $\R^\pdim$ and $\R^\qdim$, and the \emph{exponent of multiplicative irrationality} is the number\Footnote{This definition agrees with the multiplicative approximation framework considered in \cite{KMW}, but not the one considered in \cite{KleinbockMargulis}; see comments after Proposition \ref{propositiondynamicalinterpretation} for more details.}
\[
\omega_\mult(\bfA) = \limsup_{\substack{\qq\in\Z^\qdim\butnot\{\0\} \\ \pp\in\Z^\pdim}} \frac{-\log\prod_{i = 1}^\pdim |(\bfA\qq - \pp)_i|}{\log \prod_{j = 1}^\qdim |q_j|\vee 1}\cdot
\]
Note that $\omega_\mult(\bfA) \geq (\pdim/\qdim)\omega(\bfA)$. The relationship between matrix approximation and simultaneous approximation (i.e. the approximation of vectors in $\R^d$ by rational vectors described at the beginning of this paper) is as follows: if $\qdim = 1$ and $\xx = \bfA\ee_1$, then $\omega(\bfA) = \omega(\xx) - 1$ and $\omega_\mult(\bfA) = \omega_\mult(\xx) - \pdim$. The matrix $\bfA$ is called \emph{very well approximable} if $\omega(\bfA) > \qdim/\pdim$, and \emph{very well multiplicatively approximable} if $\omega_\mult(\bfA) > 1$. As in the case of vectors, we denote the set of very well (multiplicatively) approximable $\pdim\times \qdim$ matrices by $\mathrm{VW(M)A}_{\bothdim}$, and we call a measure $\mu$ on $\MM$ \emph{extremal} if $\mu(\VWA_{\bothdim}) = 0$ and \emph{strongly extremal} if $\mu(\VWMA_{\bothdim}) = 0$. Also as before, the sets $\VWA_{\bothdim}$ and $\VWMA_{\bothdim}$ are both Lebesgue nullsets of full Hausdorff dimension which satisfy $\VWA_{\bothdim} \subset \VWMA_{\bothdim}$.

It turns out (cf. \cite{KMW, BKM, ABRdS,ABRdS3}) that the natural vector space structure of $\MM$ is not appropriate for determining extremality and strong extremality. Instead, it is better to identify $\MM$ with its image under the \emph{Pl\"ucker embedding} $\psi \equiv \psi_{\bothdim}:\MM_{\bothdim} \to \EE \equiv \EE_{\bothdim}$, where $\EE\subset \bigwedge^\qdim \R^{\pdim + \qdim}$ is the subspace spanned by all basis vectors (vectors of the form $\ee_I = \bigwedge_{i\in I} \ee_i$\, where the product is taken in increasing order) other than $\bigwedge_{j = 1}^\qdim (\0\oplus\ee_j)$, and
\begin{equation}
\label{plucker}
\psi(\bfA) = \bigwedge_{j = 1}^\qdim (\bfA\ee_j\oplus\ee_j) - \bigwedge_{j = 1}^\qdim (\0\oplus\ee_j) \in \EE.
\end{equation}
Concretely, $\psi$ is the map which sends a matrix to the list of the determinants of its minors.

\begin{remark*}
Technically, the map $\psi$ defined by \eqref{plucker} is not the Pl\"ucker embedding, but is related to it as follows. Let $\GG \equiv \GG(\qdim,\pdim + \qdim)$ denote the Grassmannian space consisting of all $\qdim$-dimensional subspaces of $\R^{\pdim + \qdim}$, and let $\PP$ denote the projectivization of the vector space $\bigwedge^\qdim \R^{\pdim + \qdim}$. Consider the coordinate charts $\iota_1:\MM\to \GG$ and $\iota_2:\EE\to\PP$ defined by the formulas $\iota_1(\bfA) = (\bfA \oplus I_\qdim)(\R^\qdim)$ and $\iota_2(\omega) = (\bigwedge_1^\qdim (\0\oplus\ee_j) + \omega)\R$. Then
\[
\iota_2\circ \psi = \wbar\psi\circ \iota_1,
\]
where $\wbar\psi:\GG\to\PP$ is the true Pl\"ucker embedding. Nevertheless, we shall continue to call the map defined by \eqref{plucker} the Pl\"ucker embedding.
\end{remark*}

Given a measure $\mu$ on $\MM$, we can ask about its geometric properties (e.g. friendliness, quasi-decay) either with respect to the natural vector space structure on $\MM$ or with respect to the natural identification of $\MM$ with a submanifold of $\EE$ via the Pl\"ucker embedding. When $\qdim = 1$ or $\pdim = 1$, the Pl\"ucker embedding is a linear isomorphism, so the geometric properties of $\mu$ do not depend on which way we consider $\MM$. In general, these properties may depend on which way we consider $\MM$, but due to the nondegeneracy of the Pl\"ucker embedding, the following relations hold (cf. \cite[Theorem 2.1]{KLW} and Theorem \ref{theoremQDembedding}):
\begin{itemize}
\item If $\mu$ is absolutely friendly with respect to the vector space structure of $\MM$, then $\mu$ is friendly when considered as a measure on $\EE$.
\item If $\mu$ is quasi-decaying with respect to the vector space structure of $\MM$, then $\mu$ is weakly quasi-decaying when considered as a measure on $\EE$.
\end{itemize}
In such a scenario, the following theorem implies that $\mu$ is strongly extremal:

\begin{theorem}[Corollary of Theorem \ref{theoremexponents} below]
\label{theoremKLW}
Let $\mu$ be a measure on $\MM$ which is weakly quasi-decaying when considered as a measure on $\EE$. Then $\mu$ is strongly extremal.
\end{theorem}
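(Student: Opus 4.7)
The plan is to reduce strong extremality to a dynamical statement about unipotent trajectories on the space of unimodular lattices in $\R^{M+N}$, and then to prove a quantitative nondivergence estimate tailored to weakly quasi-decaying measures. This mirrors the Kleinbock--Margulis / KLW strategy, but with the absolute decay hypothesis replaced by the weaker inequality \eqref{weakQDwithE}.

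First I would invoke the Dani--Kleinbock correspondence (as set up in \S\ref{subsectioncorrespondence} with the functional $\Delta(\Lambda)$ and the flow $g_\tt u_\bfA \Lambda_0$): the matrix $\bfA$ is very well multiplicatively approximable if and only if there exist $\epsilon>0$ and a sequence $\tt_k\in\mfa_+^*$ with $\|\tt_k\|\to\infty$ such that $\Delta(g_{\tt_k} u_\bfA \Lambda_0) \ge \epsilon\|\tt_k\|$. By a Borel--Cantelli / dyadic-covering argument over $\mfa_+^*$, it then suffices to prove a uniform quantitative bound of the form
\[
\mu\bigl(\{\bfA\in B : \Delta(g_\tt u_\bfA \Lambda_0) \ge -\log\eta\}\bigr) \;\lesssim\; \eta^{\alpha}\mu(B)
\]
for every ball $B$ centered on $\Supp(\mu)$ and every $\tt\in\mfa_+^*$, with sufficiently small $\eta$. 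Summing such estimates along a suitable cover of $\mfa_+^*$ gives a geometric series whose sum is $\mu$-a.e.\ finite, which rules out very well multiplicative approximability.

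Next I would set up the measure-theoretic side. By definition, since $\mu$ is weakly quasi-decaying on $\EE$, there is a countable cover $(\borel_\Index)$ of $\EE$ (modulo a $\mu$-null set) on which \eqref{weakQDwithE} holds relative to $\borel_\Index$; it is enough to work with $\mu\given\borel_\Index$ for fixed $\Index$. The key polynomial structure is that every function $\bfA\mapsto\|g_\tt u_\bfA \rr\|$ with $\rr\in\bigwedge^k\Lambda_0$ extends to an affine function of the Pl\"ucker coordinates of $\bfA$ (of bounded degree in the exterior powers), and restrictions of polynomials of bounded degree are $(C,\alpha)$-good functions. This is precisely the input that makes the KLW-style inductive nondivergence argument on the flag of exterior powers $\bigwedge^k \R^{\pdim+\qdim}$ possible.

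The heart of the proof, and where I expect the main obstacle to lie, is the modification of the Kleinbock--Margulis inductive step so that every application of the hyperplane-thickening estimate \eqref{weakQDwithE} happens in the admissible regime $\beta\le \rho^\gamma$. In the inductive argument, one covers the bad set in $B$ by smaller balls on which a certain exterior vector becomes dominant, and then controls the measure of a neighborhood of the vanishing locus of a competing linear form. With only \eqref{weakQDwithE} available, one must verify that the relevant neighborhoods have thickness ratio at most $\rho^\gamma$; this is achieved by (i) choosing $\gamma$ large in terms of the combinatorial data (dimensions $M,N$ and the implied constants), (ii) replacing the KLW Federer hypothesis by the quasi-Federer lemma (Lemma \ref{lemmaquasifederer}) which is available for every locally finite measure on $\R^d$, and (iii) exploiting that the exceptional ``large-$\beta$'' thickenings are already trivially handled because they correspond to the initial scale of $B$ and can be absorbed into the implied constant. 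Modulo this parameter-tracking, the induction closes exactly as in \cite{KLW}, and combining its output with the Dani correspondence reduction yields $\mu(\VWMA_{\bothdim})=0$, i.e.\ strong extremality. Note that this is the exact argument promised by the forthcoming Theorem~\ref{theoremexponents}, specialized to the threshold value of the exponent.
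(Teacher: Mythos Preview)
Your outline is essentially the paper's own strategy: the paper derives Theorem~\ref{theoremKLW} as the special case $\AA=\EE$ of Theorem~\ref{theoremexponents}, and the proof of Theorem~\ref{theoremexponents} proceeds exactly via the correspondence principle, a Borel--Cantelli reduction, and a KLW-style flag induction (Lemmas~\ref{lemmabasecase}--\ref{lemmainductivestep}) in which the new ingredient \eqref{rx3t} guarantees the admissibility constraint $\beta\lesssim\rho^\gamma$ and the Federer hypothesis is replaced by Lemma~\ref{lemmaquasifederer}. So structurally you are on the right track and even identify the correct ``main obstacle''.

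There is, however, one genuine slip. You invoke the Dani correspondence over the one-parameter ray $\mfa_+^*$, but that characterizes $\VWA$, not $\VWMA$: compare Proposition~\ref{propositiondynamicalinterpretation} with Proposition~\ref{propositiondynintmult}. For \emph{strong} extremality you must allow $\tt$ to range over the full multi-parameter cone $\mfa_+$ (and then discretize $\mfa_+$ by a net, not just a dyadic sequence along a ray, before running Borel--Cantelli). As written, your argument would only establish extremality. Once you replace $\mfa_+^*$ by $\mfa_+$ throughout, the rest of the sketch matches the paper's proof.
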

The special case of Theorem \ref{theoremKLW} which occurs when $\mu$ is friendly instead of weakly quasi-decaying was proven in \cite[Theorem 2.1]{KMW}.

Combining with Theorem \ref{theoremQDembedding} yields:
\begin{corollary}
\label{corollaryKLW}
Let $\mu$ be a measure on $\MM$ which is quasi-decaying with respect to the vector space structure of $\MM$. Then $\mu$ is strongly extremal. If $\pdim = 1$ or $\qdim = 1$, $\mu$ need only be weakly quasi-decaying.
\end{corollary}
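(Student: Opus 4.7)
The plan is a short packaging of two earlier results. The key observation is that both conclusions will follow from Theorem \ref{theoremKLW} once $\mu$ is shown to be weakly quasi-decaying when regarded as a measure on $\EE$ via the Pl\"ucker embedding $\psi:\MM\to\EE$.

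For the special case $\pdim = 1$ or $\qdim = 1$, the excerpt already observes that $\psi$ reduces to a linear isomorphism onto its image. Since linear isomorphisms send affine hyperplanes to affine hyperplanes and distort balls and distances only by bounded factors, the defining inequality \eqref{weakQDwithE} transfers from $\MM$ to $\EE$ verbatim (after adjusting the constants $C_1$ and $\alpha$). In particular, weak quasi-decay of $\mu$ on $\MM$ is equivalent to weak quasi-decay on $\EE$, and Theorem \ref{theoremKLW} yields strong extremality directly.

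For the general case I would appeal to Theorem \ref{theoremQDembedding}. The coordinates of $\psi(\bfA)$ are minors of $\bfA$, so $\psi$ is a polynomial map and in particular of class $\CC^{\ell+\epsilon}$ for every $\ell$ and $\epsilon > 0$. The only real check is that $\psi$ is $\ell$-nondegenerate for some $\ell = \ell(\pdim,\qdim)$: at each $\bfA\in\MM$, the partial derivatives of the minor coordinates up to order $\ell$ must span the ambient space $\EE$. This is a standard fact about the Pl\"ucker map (its image lies in no affine hyperplane of $\EE$) and can be verified by a direct calculation in the basis of $\EE$ consisting of wedges $\ee_I$; a convenient choice of $\ell$ is $\ell = \qdim$ or $\ell = \pdim\wedge\qdim$. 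Once this nondegeneracy is in hand, Theorem \ref{theoremQDembedding} guarantees that $\psi_*\mu$ is weakly quasi-decaying on $\EE$, and then Theorem \ref{theoremKLW} concludes.

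The bulk of the work plainly lies not in the corollary itself but in the two results it invokes: Theorem \ref{theoremKLW} carries the full weight of the Kleinbock--Margulis/KLW dynamical machinery adapted to weakly quasi-decaying measures, while Theorem \ref{theoremQDembedding} requires a careful analysis of how the hyperplane-thickening estimate behaves under a nondegenerate smooth embedding. The only potential obstacle local to the corollary itself is pinning down an explicit $\ell$ for which $\psi$ is $\ell$-nondegenerate; this, however, is essentially a combinatorial statement about minors of matrices and should not cause any real difficulty.
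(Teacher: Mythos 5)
Your proposal matches the paper's intended argument: the corollary is obtained by combining Theorem \ref{theoremKLW} with Theorem \ref{theoremQDembedding} via the nondegeneracy of the polynomial Pl\"ucker embedding, and in the case $\pdim=1$ or $\qdim=1$ by noting that $\psi$ is a linear isomorphism so weak quasi-decay transfers directly (as in Theorem \ref{theoremstabilityQD}(i), affine maps send hyperplanes to hyperplanes and distort distances boundedly). This is exactly the route the paper takes, so no further comment is needed.
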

Note that Corollary \ref{corollaryKLW} provides an alternate proof of Theorem \ref{theoremQDE}.

Although a measure $\mu$ supported on an affine subspace of $\EE$ cannot be weakly quasi-decaying, if $\mu$ is weakly quasi-decaying with respect to the affine subspace, then we can get information about the exponent of irrationality:

\begin{theorem}[Proven in Section \ref{sectionKLW}]
\label{theoremexponents}
Let $\mu$ be a measure on $\MM$ which is supported on an affine subspace $\AA\subset\EE$ and which is weakly quasi-decaying when interpreted as a measure on $\AA$. Then for $\mu$-a.e. $\bfA\in\MM$
\begin{align}
\label{exponents}
\omega(\bfA) &= \inf\{\omega(\bfB) : \bfB\in\MM\cap \AA\}.
\end{align}
Moreover, $\mu$ is strongly extremal if and only if $\MM\cap\AA\nsubset \VWMA$.
\end{theorem}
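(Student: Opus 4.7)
The strategy is to translate (\ref{exponents}) into a statement about orbits $(g_\tt u_\bfA\Lambda_0)_{\tt\in\mfa_+}$ via the Dani correspondence, and then apply a KLW-style quantitative non-divergence theorem adapted to the weakly quasi-decaying hypothesis on $\AA$. The trivial direction $\omega(\bfA)\ge s_*:=\inf_{\bfB\in\MM\cap\AA}\omega(\bfB)$ holds pointwise on $\Supp(\mu)\subseteq\MM\cap\AA$; so it suffices to prove $\mu\{\bfA:\omega(\bfA)>s\}=0$ for each rational $s>s_*$ and then let $s\downarrow s_*$ along a countable sequence.

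Fix such $s$ and pick an anchor $\bfB_0\in\MM\cap\AA$ with $\omega(\bfB_0)<s$. Via the Dani correspondence, $\omega(\bfA)>s$ means the orbit $g_\tt u_\bfA\Lambda_0$ contains vectors of norm below $e^{-\epsilon\|\tt\|}$ for infinitely many $\tt$ along a ray in $\mfa_+^*$ determined by $s$; equivalently, $\Delta(g_\tt u_\bfA\Lambda_0)>\epsilon\|\tt\|$ infinitely often. Dually, $\omega(\bfB_0)<s$ provides an upper bound $\Delta(g_\tt u_{\bfB_0}\Lambda_0)\le\epsilon'\|\tt\|$ along the same ray, i.e., a uniform \emph{lower} bound of order $e^{-\epsilon'\|\tt\|}$ on the length of each $g_\tt$-image of a primitive sublattice of $u_{\bfB_0}\Lambda_0$. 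These lengths are Pl\"ucker coordinates of $u_\bfA\Lambda_0$, hence affine-linear functions of $\bfA$ under $\psi$; restricted to $\AA$ they remain affine, and the lower bound at $\bfB_0$ says they are not identically zero on $\AA$, which is the non-degeneracy input required for quantitative non-divergence.

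The core estimate is the KLW-style bound
\begin{equation*}
\mu\bigl\{\bfA\in B:\Delta(g_\tt u_\bfA\Lambda_0)>\lambda\bigr\}\le C e^{-\alpha\lambda}\mu(B)
\end{equation*}
for every ball $B$ centered on $\Supp(\mu)$ and every $\tt$ along the relevant ray. Its proof uses the weak quasi-decay of $\mu$ on $\AA$ in place of friendliness in the original KLW argument, bounding successive hyperplane-neighborhood contributions via an iterative Besicovitch-type scheme, with the anchor's lower bound guaranteeing that the relevant zero sets in $\AA$ are proper hyperplanes. Summing over a countable dense set of $\tt$ and applying Borel--Cantelli yields $\mu\{\omega(\bfA)>s\}=0$. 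For the ``moreover'' clause, if $\MM\cap\AA\subset\VWMA$ then $\Supp(\mu)\subseteq\VWMA$ and $\mu$ is not strongly extremal; conversely, running the same argument with $\omega_\mult$ in place of $\omega$ (which only changes the family of rays in $\mfa_+$ along which one tests non-divergence) gives $\omega_\mult(\bfA)\le 1$ $\mu$-a.e., hence strong extremality. The main obstacle is establishing the above bound under the \emph{weakly} quasi-decaying hypothesis: the restriction $\beta\le\rho^\gamma$ in (\ref{weakQDwithE}) and the exhaustion $(E_n)$ of Definition \ref{definitionQD} both necessitate a careful reorganization of the iterative covering argument so that every invoked decay estimate lies in the admissible range, while still exploiting Pl\"ucker-linearity along $\AA$ to pass the anchor's lower bound to the neighborhoods of $\bfA$ that enter the covering.
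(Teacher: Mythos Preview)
Your overall plan matches the paper's approach closely: reduce via the Dani--Kleinbock--Margulis correspondence (Propositions \ref{propositiondynamicalinterpretation}, \ref{propositiondynintmult}) to a statement about $\omega(\bfA;\SS,\norm)$, use Pl\"ucker-linearity of the sublattice covolume functions $f_{\tt,V}$ (Lemma \ref{lemmaFtV}) together with a lower bound coming from the infimum over $\AA$ (Lemma \ref{lemmaeasydirection}), and then run a KLW-style flag-based iterative covering (Lemmas \ref{lemmabasecase}, \ref{lemmainductivestep}) followed by Borel--Cantelli. Your anchor $\bfB_0$ plays the role of the paper's assumption \eqref{KLWassumption}, and your observation that a short-vector bound at $\bfB_0$ yields covolume lower bounds on all primitive sublattices is exactly the content of Lemma \ref{lemmaeasydirection}.

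The one substantive gap is that you identify the obstacle---getting every decay estimate into the admissible range $\beta\le\rho^\gamma$---but do not say how to overcome it, and this is precisely the new ingredient in the paper. In the KLW covering argument the balls at successive levels of the tree can in principle shrink arbitrarily fast, so there is no a priori reason the ratio $\beta/\rho^\gamma$ stays bounded. The paper's fix is Lemma \ref{lemmainductivestep}, specifically \eqref{rx3t}: at each step one uses Minkowski's theorem applied to the quotient lattice $(\Lambda\cap W_{i+1})/W_i$ to manufacture an $\FF$-addable vertex $V$ whose approximation quality is already below threshold at the current scale, forcing the child ball to have radius $\rho_\bfA\gtrsim e^{-2\|\tt\|_\infty}$. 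This uniform lower bound on all ball radii in the tree (depending only on $\tt$, not on the branch) is what guarantees $\beta\lesssim\rho^{\gamma/2}$ in the final decay estimate. Without producing this lower bound, your ``careful reorganization'' remains unspecified, and the argument would not go through with merely the weak quasi-decay hypothesis.
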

Note that Theorem \ref{theoremKLW} follows from Theorem \ref{theoremexponents} by taking $\AA = \EE$, since it is well-known that in this case the right hand side of \eqref{exponents} is $\pdim/\qdim$, and that $\MM \nsubset \VWMA$. It appears to be difficult to prove a multiplicative analogue of \eqref{exponents}, due to difficulties with providing a dynamical interpretation for the exponent of multiplicative irrationality function; cf. Footnote \ref{footnotetechnical}.

Historical note: A special case of \eqref{exponents}, where the condition of being weakly quasi-decaying is replaced by an analogue of a friendliness condition, was proven independently by Aka, Breuillard, Rosenzweig, and de Saxc\'e \cite[Theorem 5.2.5]{ABRdS3} (see also their earlier announcement of this result in \cite[Theorem 4.3]{ABRdS}). Their paper also contains other interesting information about the function $\AA\mapsto \inf\{\omega(\bfB) : \bfB\in\MM\cap \AA\}$, such as its value when $\AA$ is rational.

\subsection{An overview of Part II}\Footnote{In this subsection we refer to the references cited for the definitions of terms used in the theorems.}
\label{subsectionpreview}
Theorem \ref{theoremexactdim} (exact dimensional measures of sufficiently large dimension are quasi-decaying) already provides large classes of examples of quasi-decaying measures which are not known to be friendly. For example, the following result was proven by F. Hofbauer:

\begin{theorem*}[{\cite[Theorem 1]{Hofbauer}}]
Let $T:[0,1]\to[0,1]$ be a piecewise monotonic transformation whose derivative has bounded $p$-variation for some $p > 0$. Let $\mu$ be a measure on $[0,1]$ which is ergodic and invariant with respect to $T$. Let $h(\mu)$ and $\chi(\mu)$ denote the entropy and Lyapunov exponent of $\mu$, respectively. If $\chi(\mu) > 0$, then $\mu$ is exact dimensional of dimension
\[
\delta_\mu := \frac{h(\mu)}{\chi(\mu)}\cdot
\]
\end{theorem*}

Note that if $h(\mu) > 0$, then Ruelle's inequality \cite[Theorem 7.1]{BarrioJimenez} implies that $\chi(\mu) > 0$, so the above result applies and gives $\delta_\mu > 0 = d - 1$, so $\mu$ is quasi-decaying, and in particular extremal.\Footnote{The inequality $\chi(\mu) < \infty$ follows from the hypothesis that $T'$ has bounded $p$-variation, which in particular implies that $T'$ is bounded.}

There are numerous other classes of measures coming from dynamics which are known to be exact dimensional. A notable example is the theorem of Barreira, Pesin, and Schmeling \cite{BPS} to the effect that any measure ergodic, invariant, and hyperbolic with respect to a diffeomorphism is exact dimensional. Theorem \ref{theoremexactdim} applies directly to those measures whose dimension is sufficiently large, but in Part II we will mostly be interested in the question of what happens for measures whose dimension is not large enough. (We will also be interested in measures which are not necessarily exact dimensional but which nevertheless can be proved to be quasi-decaying.) As mentioned above, the philosophy is that some sort of ``nonplanarity'' assumption should be able to substitute for the large-dimension hypothesis. For inspiration we can turn to the known dynamical examples of absolutely friendly measures \cite{KLW, StratmannUrbanski1, Urbanski}, which share the property that the nonplanarity hypothesis takes the form: the dynamical system in question cannot preserve a manifold of strictly lower dimension than the ambient space.

Our next examples of quasi-decaying measures are generalizations of the known examples of absolutely friendly measures. For example, the following theorem generalizes the main result of \cite{Urbanski}:

\begin{theorem}[{\cite[Theorem 1.14]{DFSU_GE2}}]
\label{theoremgibbs}
Fix $d\in\N$, and let $(u_a)_{a\in A}$ be a (possibly infinite) irreducible conformal iterated function system (CIFS) on $\R^d$. Let $\phi:A^\N\to \R$ be a summable locally H\"older continuous potential function, let $\mu_\phi$ be a Gibbs measure of $\phi$, and $\pi:A^\N\to\R^d$ be the coding map. Suppose that the Lyapunov exponent
\begin{equation}
\label{lyapunov}
\chi_{\mu_\phi} = \int \log(1/|u_{\omega_1}'(\pi\circ\sigma(\omega))|) \;\; \dee\mu_\phi(\omega)
\end{equation}
is finite. Then $\pi_*[\mu_\phi]$ is quasi-decaying.
\end{theorem}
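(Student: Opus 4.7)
The plan is to dispatch the theorem by a dichotomy according to the Hausdorff dimension $\delta := h(\mu_\phi)/\chi_{\mu_\phi}$, which is well-defined and finite thanks to $\chi_{\mu_\phi}<\infty$ together with the Mauldin--Urba\'nski theory of infinite CIFS. If $\delta > d-1$, the pushforward $\pi_*[\mu_\phi]$ is exact dimensional of dimension $\delta$ (a Shannon--McMillan--Breiman argument along cylinders using the Gibbs property and bounded distortion), and Theorem \ref{theoremexactdim} yields quasi-decay immediately. The substantive content therefore lies in the complementary case $\delta \leq d-1$, where the large-dimension crutch is unavailable and nonplanarity must be extracted directly from irreducibility.

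For the hard case, I would build the exhaustion $(\borel_\Index)_\Index$ from sets $\widetilde\borel_\Index \subset A^\N$ consisting of codings $\omega$ for which, uniformly in $k$, (i) the Birkhoff average of $\log(1/|u_{\omega_1}'|)$ tracks $\chi_{\mu_\phi}$, (ii) the Birkhoff average of $\phi$ tracks $\int\phi\,\dee\mu_\phi$, and (iii) bounded-distortion constants and local H\"older data along the orbit are dominated by $\Index$; then set $\borel_\Index := \pi(\widetilde\borel_\Index)$. The ergodic theorem gives $\mu_\phi(\bigcup_\Index \widetilde\borel_\Index) = 1$. Given $\xx = \pi(\omega) \in \borel_\Index$, a radius $\rho\leq 1$, a thickness $\beta \leq \rho^\gamma$, and $\LL\in\Hyp$, select the smallest $k$ so that $u_{\omega_1}\circ\cdots\circ u_{\omega_k}$ contracts the limit set into $B(\xx,\rho)$. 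Bounded distortion plus conformality imply that the inverse expands $B(\xx,\rho)$ to approximately unit scale and carries $\thickvar{\LL}{\beta\rho}$ into an $O(\beta)$-thickening of some hyperplane or sphere $S$. The Gibbs estimate along the cylinder $[\omega_1\cdots\omega_k]$ then reduces the conclusion to a \emph{uniform unit-scale hyperplane decay estimate}
\[
\sup_{\LL'\in\Hyp}\; \pi_*[\mu_\phi]\big(\thickvar{\LL'}{\tau}\big) \;\leq\; C\tau^\alpha \qquad (0 < \tau \leq 1),
\]
for some $C,\alpha > 0$ depending on $\Index$ and $\gamma$.

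This unit-scale estimate is the heart of the proof and the only place irreducibility enters. The scheme is compactness-and-inheritance: were no such $\alpha$ to exist, a sequence of near-extremal hyperplanes would yield, via compactness of the Grassmannian, a limit hyperplane $\LL^\infty$ carrying positive $\pi_*[\mu_\phi]$-mass; pulling back through suitably chosen cylinders and normalizing then produces a Gibbs-like measure whose topological support lies in a proper affine subvariety, contradicting irreducibility of the CIFS. The \textbf{main obstacle} is implementing this robustly in two regimes: (a) the infinite alphabet $A$, for which uniform distortion and tightness must be restored by truncating to finite sub-alphabets and using summability of $\phi$ together with $\chi_{\mu_\phi}<\infty$ to control tails; and (b) the fact that conformal pullbacks of hyperplanes are \emph{spheres} rather than hyperplanes, requiring a short lemma that unit-scale $\tau$-neighborhoods of large-radius spheres are comparable in measure to $\tau$-neighborhoods of their tangent hyperplanes. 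The scale window $\beta \leq \rho^\gamma$ built into the definition of quasi-decay is exactly what affords $O(\gamma\log(1/\rho))$ additional iterates of the dynamics beyond the natural choice of $k$, converting a modest per-iterate gain into the required polynomial gain $\beta^\alpha$.
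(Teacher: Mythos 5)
First, a caveat: this paper does not actually prove Theorem \ref{theoremgibbs}; it is stated in the overview of Part II and cited from \cite{DFSU_GE2}, so there is no in-paper proof to compare against, and I am assessing your sketch on its own terms. Your overall architecture --- exhaustion sets $\borel_\Index$ built from Birkhoff averages and distortion control, pullback of a hyperplane slab to unit scale through a cylinder map, and a nonplanarity input extracted from irreducibility --- is the right one and is in the spirit of the Urba\'nski-type scheme the authors say they are generalizing. The dichotomy on $\delta$ is harmless but not load-bearing, since the hard case subsumes the easy one.

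The genuine gap is the reduction to the displayed \emph{uniform} unit-scale estimate $\sup_{\LL}\pi_*[\mu_\phi]\big(\thickvar{\LL}{\tau}\big)\leq C\tau^\alpha$. Two problems. First, the compactness-and-inheritance argument cannot produce it: the failure of polynomial decay does not yield a hyperplane of positive measure (a measure can annihilate every generalized sphere while its sphere-neighborhoods decay only logarithmically). Compactness of the family of generalized spheres meeting the limit set, plus irreducibility, gives only a single-scale definite decrease $\sup_{S}\pi_*[\mu_\phi]\big(\thickvar{S}{\tau_0}\big)\leq 1-\epsilon_0$; upgrading this to a power of $\beta$ requires iterating that decrease down the cylinder structure over the $\asymp\gamma\log(1/\rho)$ extra generations afforded by $\beta\leq\rho^\gamma$. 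Your last sentence gestures at exactly this iteration, but your stated reduction target already presupposes the polynomial rate, so the heart of the proof is missing. Second, and worse, the uniform unit-scale estimate is \emph{false} for general infinite irreducible CIFS: take maps $u_n$, $n\in\N$, whose images are balls of radius $r_n=e^{-\sqrt{\log n}}$ centered on a fixed line $\LL_0$ (plus finitely many generic maps for irreducibility), with $\mu_\phi([n])\asymp 1/(n\log^2 n)$ coming from a locally constant summable potential. Then $\chi_{\mu_\phi}=\sum_n\mu_\phi([n])\log(1/r_n)<\infty$, yet $\pi_*[\mu_\phi]\big(\thickvar{\LL_0}{\tau}\big)\gtrsim\sum_{n\geq\exp(\log^2(1/\tau))}1/(n\log^2 n)\asymp\log^{-2}(1/\tau)$, which is not $O(\tau^\alpha)$ for any $\alpha>0$. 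This is precisely why the conclusion is quasi-decay rather than absolute decay: the polynomial bound can only hold after intersecting with $\borel_\Index$, which truncates the large letters responsible for the slow decay, and your reduction discards that intersection at the decisive step. Two secondary issues: your tangent-hyperplane lemma handles only spheres of radius $\gtrsim 1/\tau$, whereas M\"obius inverse branches produce image spheres of bounded radius, so the fixed-scale decrease must be established (and propagated) for the full compact family of generalized spheres; and without the open set condition you must sum over \emph{all} cylinders of scale $\asymp\rho$ meeting $B(\xx,\rho)$, not only the one containing $\xx$, comparing the total to $\mu\big(B(\xx,\rho)\big)$ via Lemma \ref{lemmaquasifederer}.
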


This theorem generalizes \cite{Urbanski} in two different ways:
\begin{itemize}
\item The CIFS can be infinite, as long as the Lyapunov exponent is finite.
\item The open set condition is no longer needed.
\end{itemize}

Note that if $\phi$ is the ``conformal potential'' $\phi(\omega) = -\log |u_{\omega_1}'(\pi(\sigma(\omega)))|$, then the convergence of \eqref{lyapunov} for some $\alpha$ is equivalent to the strong regularity of the CIFS $(u_a)_{a\in E}$. Thus the following is a corollary of Theorem \ref{theoremgibbs}:

\begin{corollary}[Conformal measures of infinite iterated function systems]
Fix $d\in\N$, and let $(u_a)_{a\in E}$ be a strongly regular conformal iterated function system acting irreducibly on an open set $W\subset \R^d$. Let $\mu$ be the conformal measure of $(u_a)_{a\in E}$. Then $\mu$ is quasi-decaying.
\end{corollary}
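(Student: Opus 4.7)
The plan is to realize the conformal measure $\mu$ as (equivalent to) the pushforward of a Gibbs measure for a distinguished potential on $E^\N$, and then invoke Theorem \ref{theoremgibbs}. Let $\delta$ denote the Hausdorff dimension of the limit set, i.e., the unique exponent for which the topological pressure vanishes. Define the normalized conformal (geometric) potential $\phi:E^\N\to\R$ by
\[
\phi(\omega) \df -\delta\log\bigl|u_{\omega_1}'\bigl(\pi(\sigma(\omega))\bigr)\bigr|.
\]
By the Mauldin--Urba\'nski theory of (infinite) conformal iterated function systems, for strongly regular CIFS there exists a shift-invariant Gibbs equilibrium state $\mu_\phi$ for $\phi$ whose projection $\pi_*[\mu_\phi]$ is equivalent to the conformal measure $\mu$ with Radon--Nikodym derivatives bounded away from $0$ and $\infty$.

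Next, I would verify the three hypotheses of Theorem \ref{theoremgibbs} for this choice of $\phi$. Local H\"older continuity of $\phi$ is a consequence of the bounded distortion property for conformal contractions (the higher-dimensional Koebe-type estimate built into the standard definition of CIFS), which controls ratios $|u_a'(x)|/|u_a'(y)|$ uniformly in $a\in E$. Summability of $\phi$, in the sense required by Gibbs-state existence, is exactly the statement that the partition function $\sum_{a\in E}\|u_a'\|_\infty^\delta$ (or its pressure-series analogue) converges, which is part of the definition of strong regularity. Finally, the finiteness of the Lyapunov exponent
\[
\chi_{\mu_\phi} = \int \log\bigl(1/|u_{\omega_1}'(\pi\circ\sigma(\omega))|\bigr) \, d\mu_\phi(\omega)
\]
is, as remarked in the paragraph immediately preceding the corollary, equivalent to strong regularity when $\phi$ is (a scalar multiple of) the conformal potential.

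Theorem \ref{theoremgibbs} then applies and yields that $\pi_*[\mu_\phi]$ is quasi-decaying. Since quasi-decay is preserved under absolute continuity by Theorem \ref{theoremstabilityQD}(v), and since the conformal measure $\mu$ is absolutely continuous with respect to $\pi_*[\mu_\phi]$ (with bounded density), it follows that $\mu$ is quasi-decaying.

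The main obstacle is not in the application of Theorem \ref{theoremgibbs} itself, which is essentially mechanical once the potential is chosen, but rather in the identification of $\mu$ with $\pi_*[\mu_\phi]$ (up to bounded density) and the characterization of strong regularity as finiteness of $\chi_{\mu_\phi}$. Both are standard outputs of the Mauldin--Urba\'nski thermodynamic formalism for infinite CIFS, so the corollary really is a direct consequence of Theorem \ref{theoremgibbs} combined with well-known input from that theory.
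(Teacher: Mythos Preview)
Your proposal is correct and follows exactly the route the paper intends: the paragraph immediately preceding the corollary already explains that taking $\phi$ to be the conformal potential makes the finiteness of the Lyapunov exponent equivalent to strong regularity, so the corollary is meant as a direct specialization of Theorem \ref{theoremgibbs}. Your additional care in passing from $\pi_*[\mu_\phi]$ to the conformal measure $\mu$ via Theorem \ref{theoremstabilityQD}(v) is a legitimate detail the paper leaves implicit, but otherwise the arguments coincide.
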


Our next example extends the result of \cite{StratmannUrbanski1} from the setting of convex-cocompact groups to the setting of geometrically finite groups:

\begin{theorem}[{\cite[Theorems 1.9 and 1.17]{DFSU_GE2}}]
Let $G$ be a geometrically finite group of M\"obius transformations of $\R^d$ which does not preserve any generalized sphere. Then the Patterson--Sullivan measure $\mu$ of $G$ is both quasi-decaying and friendly. However, $\mu$ is absolutely friendly if and only if every cusp of $G$ has maximal rank.
\end{theorem}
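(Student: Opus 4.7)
The proof rests on the Stratmann--Velani Global Measure Formula (GMF): for $\xi\in\Lambda(G)$ and $0<r\leq 1$, $\mu(B(\xi,r))$ equals $r^\delta$, where $\delta$ is the critical exponent of $G$, multiplied by an explicit ``cusp correction'' depending on the proximity of $\xi$ at scale $r$ to the $G$-orbit of each (of finitely many, up to conjugacy) parabolic fixed point, and on the rank $k(p)$ of that parabolic. The correction is trivial at conical limit points and at maximal-rank cusps, but becomes nontrivial near low-rank cusps. Nonplanarity is proved first: were $\mu(\LL)>0$ for some generalized sphere $\LL$, the $G$-quasi-invariance of $\mu$ together with minimality of $G$ on $\Lambda(G)$ would force $\Lambda(G)$ into a finite union of generalized spheres, contradicting the hypothesis. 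Doubling follows immediately from the GMF, since the cusp correction changes by a bounded factor when $r$ is doubled.

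Friendliness amounts to the decay inequality $\mu(\thickvar{\LL}{\beta\|d_\LL\|_{\mu,B}}\cap B)\lesssim\beta^\alpha\mu(B)$ for every ball $B=B(\xi,\rho)$ and hyperplane $\LL\in\Hyp$. I would use geometric finiteness of $G$ to choose $g\in G$ sending $B$ approximately to a ball of unit diameter in the thick part of a fundamental domain. The M\"obius transformation $g$ has bounded conformal distortion on $B$, and the $\delta$-conformality of $\mu$ gives $g_*\mu\asymp |g'|^\delta\mu$, so the decay inequality on $B$ reduces to a unit-scale estimate on $g(B)$. At unit scale the estimate decomposes into a compactness-based ``nonplanarity with polynomial rate'' bound on the thick part, together with a direct GMF computation in each cusp neighbourhood.

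For the absolute-friendliness dichotomy: if every cusp of $G$ has maximal rank, the cusp correction in the GMF is identically $1$, so $\mu$ is Ahlfors $\delta$-regular on $\Lambda(G)$; running the preceding argument with $\beta\rho$ in place of $\beta\|d_\LL\|_{\mu,B}$ then yields absolute friendliness. Conversely, if $p$ is a cusp of non-maximal rank $k(p)$, the orbit of $p$ under its parabolic stabilizer accumulates along a $k(p)$-dimensional affine subspace $V_p\subset\R^d$. Taking $\LL\supset V_p$ and a sequence of balls $B_n$ shrinking tangentially onto $p$, the GMF forces $\mu(\thickvar{\LL}{\beta r_n}\cap B_n)/\mu(B_n)$ to remain bounded away from $0$ as $\beta\to 0$, violating absolute decay.

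Quasi-decay persists in the non-maximal-rank case thanks to the two slackenings in Definition \ref{definitionQD}: the set $\borel$ of almost-full measure and the restriction $\beta\leq\rho^\gamma$. For each $\Index$, let $\borel_\Index$ be the set of limit points whose distance at scale $2^{-\Index}$ from every $G$-orbit of a non-maximal-rank cusp exceeds a prescribed polynomial lower bound; the GMF shows $\mu(\R^d\butnot\bigcup_\Index\borel_\Index)=0$, and on each $\borel_\Index$ the near-cusp contribution to $\mu(\thickvar{\LL}{\beta\rho}\cap B\cap\borel_\Index)$ is absorbed by the gain $\rho^{\gamma\alpha}$ coming from $\beta\leq\rho^\gamma$. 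The main obstacle is this absorption step: matching the polynomial exponents requires precise bookkeeping, via the GMF, of how much $\mu$ concentrates on thin neighbourhoods of each low-rank cusp's affine span, and only after tuning $\gamma$ against the deficit exponent $k(p)-\delta$ does the quasi-decay inequality close up.
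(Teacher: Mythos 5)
This theorem is not proven in the present paper at all: it is quoted from Part II (the reference \cite{DFSU_GE2}, Theorems 1.9 and 1.17), and Part I contains no argument for it beyond the remark that the friendliness half can also be obtained by KLW-type methods. So there is no in-paper proof to compare against; I can only assess your outline on its own terms. The overall strategy you describe --- the Stratmann--Velani global measure formula, $\delta$-conformality to rescale a ball to unit size in the thick part, an exceptional-set decomposition $(\borel_\Index)_\Index$ keyed to the depth of cusp excursions, and the $\beta\leq\rho^\gamma$ slack to absorb the low-rank cusp contribution --- is indeed the right one and matches the architecture of the actual proof in Part II. The doubling and nonplanarity arguments are fine in outline (for nonplanarity one must also rule out a \emph{finite} $G$-invariant union of spheres, which uses that $G$ has a finite-index subgroup preserving each sphere of maximal measure, contradicting the hypothesis after passing to that subgroup).

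However, as a proof the proposal has genuine gaps at exactly the quantitative points. First, ``compactness-based nonplanarity with polynomial rate'' is not a consequence of compactness: compactness only yields, for each unit ball $B$, some $\epsilon>0$ with $\mu\big(\thickvar\LL\epsilon\cap B\big)<\mu(B)$ uniformly over $\LL\in\Hyp$; upgrading this to the power-law decay $\beta^\alpha$ requires an iteration/self-conformality argument (as in Stratmann--Urba\'nski and KLW for self-similar measures), and in the geometrically finite case that iteration must be interleaved with the cusp estimates. Second, the claim that ``the GMF shows $\mu(\R^d\butnot\bigcup_\Index\borel_\Index)=0$'' is not immediate: one needs a Borel--Cantelli argument (a Khinchin--Sullivan logarithm-law type estimate on the depth of cusp excursions at scale $2^{-\Index}$), which is where the choice of the polynomial lower bound defining $\borel_\Index$ is actually constrained. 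Third, and most importantly, the ``absorption step'' --- showing that on $\borel_\Index$ the measure of $\thickvar\LL{\beta\rho}\cap B$ near a low-rank cusp's affine span is dominated by $\beta^\alpha\mu(B)$ once $\beta\leq\rho^\gamma$ --- is the entire content of the quasi-decay claim, and you state it as something that must be made to ``close up'' rather than establishing it. Since that is precisely the point where quasi-decay differs from (and is weaker than) absolute friendliness, the proposal as written identifies the correct difficulty but does not resolve it.
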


An interesting aspect of this example is that we are able to prove the extremality of the Patterson--Sullivan measure using KLW's condition; for this particular example it was not necessary to introduce the quasi-decay condition. However, proving quasi-decay has the advantage of also proving that the measure is extremal with respect to matrix approximations as well; cf. \6\ref{subsectionadditional} above.

In subsequent papers, we plan to find sufficient conditions for quasi-decay for many other classes of measures as well, but at this stage we cannot give precise theorem statements.

On the other hand, it is also interesting to consider dynamical measures which are not extremal. Three of the authors have already considered this question in \cite{FSU1}, where the following was proven:

\begin{theorem}[{\cite[Theorem 4.5]{FSU1}}]
\label{theoremFSU}
There exists a measure $\mu$ invariant with respect to the Gauss map which gives full measure to the Liouville numbers. In particular, $\mu$ is not extremal.
\end{theorem}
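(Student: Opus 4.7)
My plan is to realize $\mu$ as the push-forward of a suitably chosen shift-invariant probability measure on the symbolic space $\N^\N$ under the continued-fraction coding $\pi:\N^\N\to[0,1]$, $\pi(a_1,a_2,\ldots):=[0;a_1,a_2,\ldots]$. This map is a Borel bijection from $\N^\N$ onto the irrationals in $(0,1)$ and conjugates the left shift $\sigma$ to the Gauss map $T$; hence every shift-invariant probability $\widetilde\mu$ on $\N^\N$ pushes forward to a $T$-invariant probability $\mu:=\pi_*\widetilde\mu$ on $[0,1]$. It therefore suffices to produce $\widetilde\mu$ giving full measure to sequences $(a_n)$ that code a Liouville number, since the Liouville numbers are contained in $\VWA_1$. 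Recall that $(a_n)$ codes a Liouville number precisely when $\limsup_{n\to\infty}\log a_{n+1}/\log q_n = +\infty$, where $p_n/q_n$ is the $n$-th convergent; this follows from the standard inequalities $1/(2q_nq_{n+1})\le|x-p_n/q_n|\le 1/(q_nq_{n+1})$ together with $q_{n+1}\asymp a_{n+1}q_n$.

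I would take $\widetilde\mu := \nu^{\otimes\N}$ Bernoulli, where $\nu$ is a probability on $\N$ with sufficiently heavy tails; an explicit workable choice is $\nu(k)\propto k^{-1}(\log k)^{-(1+\epsilon)}$ for a small $\epsilon\in(0,1)$. This $\nu$ is normalizable (the series converges) while the logarithmic moment diverges. Writing $X_i := \log a_i$, the survival function satisfies $\bar G(t) := \nu\{k:\log k>t\}\asymp t^{-\epsilon}$, so the $X_i$ are i.i.d.\ with regularly varying tails of index $\epsilon$. Using the estimate $\log q_n\le\sum_{i\le n}\log(a_i+1)\lesssim S_n+n$ with $S_n := X_1+\cdots+X_n$, the goal reduces to showing that
\[
\limsup_{n\to\infty}\,\frac{X_{n+1}}{S_n+n} = +\infty \qquad \widetilde\mu\text{-a.s.}
\]
I would establish this by L\'evy's conditional Borel--Cantelli lemma applied to the events $A_n^M := \{X_{n+1}>M(S_n+n)\}$. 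Since $a_{n+1}$ is independent of $\mathcal{F}_n := \sigma(X_1,\ldots,X_n)$, the conditional probability equals $\bar G\big(M(S_n+n)\big)\asymp (S_n+n)^{-\epsilon}$. A one-sided LIL for sums in the domain of attraction of an $\epsilon$-stable law gives $S_n = O\!\big(n^{1/\epsilon}(\log\log n)^C\big)$ almost surely, so $\sum_n(S_n+n)^{-\epsilon}\gtrsim\sum_n 1/(n(\log\log n)^{C\epsilon})$, which diverges. Hence $A_n^M$ occurs for infinitely many $n$ for each $M\in\N$, and intersecting over $M$ yields the required divergence. Since the Liouville set is contained in $\VWA_1$, this gives $\mu(\VWA_1)=1$, so $\mu$ is not extremal.

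The main obstacle I anticipate is the quantitative almost-sure upper bound on the heavy-tailed random walk $S_n$. The theorem I need --- an a.s.\ growth estimate $S_n\lesssim n^{1/\epsilon}(\log\log n)^C$ for regularly varying i.i.d.\ sums of index $\epsilon\in(0,1)$ --- is classical but requires care to cite precisely, and the exponent $\epsilon$ must be tuned to satisfy simultaneously (i) normalizability of $\nu$, (ii) infinite logarithmic moment of $\nu$, so that single terms $X_{n+1}$ can dominate the partial sum $S_n$ often enough, and (iii) divergence of the conditional Borel--Cantelli series. Balancing these three requirements is the crux of the construction; once a workable $\epsilon\in(0,1)$ is fixed, the coding, push-forward, and Diophantine conclusion are essentially formal.
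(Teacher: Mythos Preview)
First, a framing remark: the paper does not itself prove this statement --- Theorem~\ref{theoremFSU} is quoted from \cite[Theorem~4.5]{FSU1} with no argument given here --- so there is no in-paper proof to compare against.

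Your approach via the continued-fraction coding and a heavy-tailed Bernoulli measure is natural, but there is a genuine gap precisely at the point you flag as the main obstacle. The almost-sure upper bound $S_n = O\big(n^{1/\epsilon}(\log\log n)^C\big)$ that you invoke is \emph{false} for i.i.d.\ non-negative summands with regularly varying tails of index $\epsilon\in(0,1)$. Indeed $S_n \ge X_n$, and since $\bar G\big((n\log n)^{1/\epsilon}\big)\asymp 1/(n\log n)$ with $\sum_n 1/(n\log n)=\infty$, the second Borel--Cantelli lemma (independence) gives $X_n>(n\log n)^{1/\epsilon}$ for infinitely many $n$ almost surely; so no bound of the shape $n^{1/\epsilon}(\log\log n)^C$ can hold. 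The correct eventual upper envelope is Chover's LIL for stable sums, $S_n\le n^{1/\epsilon}(\log n)^{1/\epsilon+o(1)}$, and substituting this into your series yields only $(S_n+n)^{-\epsilon}\gtrsim n^{-1}(\log n)^{-1-o(1)}$, whose sum \emph{converges}. Thus the L\'evy conditional Borel--Cantelli step does not go through: you have not shown $\sum_n(S_n+n)^{-\epsilon}=\infty$ almost surely, and the bound you propose is incapable of showing it. (You have confused the $\sqrt{\log\log n}$ correction from the finite-variance LIL with the $(\log n)^{1/\epsilon}$ correction that appears in the stable case.)

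What you actually need is $\limsup_n X_{n+1}/S_n=\infty$ almost surely, i.e.\ that infinitely often a single term dominates the entire preceding sum. For index $\epsilon<1$ this is consistent with the one-big-jump heuristic for subexponential sums, but it does not follow from any eventual upper bound on $S_n$; one needs a Kochen--Stone/second-moment argument for the events $\{X_{n+1}>MS_n\}$, a direct analysis along record times, or an appeal to the literature on when the maximal summand dominates the partial sum. Alternatively, one can abandon the product-measure ansatz and build a non-Bernoulli shift-invariant measure on $\N^\N$ tailored to force the Liouville property. Either route requires substantially more than what your proposal supplies.
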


By \cite[Theorem 2.1]{FSU1}, the measure $\mu$ in Theorem \ref{theoremFSU} must have infinite Lyapunov exponent. In Part II, we show that for certain dynamical systems (namely hyperbolic toral endomorphisms), the class of invariant measures which give full measure to the Liouville points is not only nonempty but topologically generic:

\begin{theorem}[{\cite[Theorem 1.13]{DFSU_GE2}}]
Let $T:X\to X$ be a hyperbolic toral endomorphism, where $X = \R^d/\Z^d$. Let $\M_T(X)$ be the space of $T$-invariant probability measures on $X$. Then the set of measures which give full measure to the Liouville points is comeager in $\M_T(X)$.
\end{theorem}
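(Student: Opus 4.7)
The plan is to exhibit $\mathcal{G} := \{\mu \in \M_T(X) : \mu(L) = 1\}$ as a dense $G_\delta$ in the Polish space $(\M_T(X),\text{weak-}*)$, where $L := \{x \in X : \omega(x) = \infty\}$ is the set of Liouville points. The $G_\delta$ structure of $\mathcal{G}$ will come from the topological structure of $L$ inside $X$, and density will come from a classical result on periodic orbit measures for hyperbolic dynamics.

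First, I would set up the $G_\delta$ structure. Writing $L = \bigcap_n U_n$ with $U_n = \{x : \omega(x) > n\}$ and $U_n = \bigcap_Q V_{n,Q}$, where
\[
V_{n,Q} = \bigcup_{q \geq Q}\bigcup_{\pp\in\Z^d} B(\pp/q,\, q^{-n}) \subset X
\]
is open, one checks that $V_{n,Q}$ is decreasing in $Q$, so $\mu(U_n) = 1$ iff $\mu(V_{n,Q}) > 1-1/k$ for every $Q$ and $k$. By the Portmanteau theorem each such condition defines an open subset of $\M_T(X)$, so
\[
\mathcal{G} = \bigcap_{n,Q,k}\left\{\mu \in \M_T(X) : \mu(V_{n,Q}) > 1 - \tfrac{1}{k}\right\}
\]
is $G_\delta$ in $\M_T(X)$.

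For density, observe that under the paper's convention for $\omega(\xx)$ (which allows $\pp/q = \xx$), every rational point of $X$ satisfies $\omega = \infty$ and hence lies in $L$. For a hyperbolic toral endomorphism $T = A\bmod\Z^d$, any $n$-periodic point $x$ satisfies $(A^n - I)x \in \Z^d$, and hyperbolicity forces $A^n - I$ to be nonsingular, so $x \in \Q^d/\Z^d$; thus every periodic orbit measure of $T$ is supported on rationals and therefore lies in $\mathcal{G}$. Since hyperbolic toral endomorphisms are expansive and enjoy Bowen's specification property (via their symbolic coding by a subshift of finite type), a classical theorem of Sigmund gives that the periodic orbit measures are weak-$*$ dense in $\M_T(X)$. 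Hence $\mathcal{G}$ contains a dense subset, and combined with its $G_\delta$ structure it is a dense $G_\delta$, i.e., comeager.

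The key conceptual point --- and the resolution of the main apparent obstacle --- is that, under the paper's convention for $\omega$, rationals are automatically Liouville, which turns the dense family of periodic orbit measures into a \emph{free} dense subset of $\mathcal{G}$; no delicate construction of an invariant measure concentrated on Liouville \emph{irrationals} (as in the FSU theorem for the Gauss map) is needed for the Baire-category statement. The only nonelementary ingredient is Sigmund's density theorem for periodic orbit measures, which is classical for systems with specification.
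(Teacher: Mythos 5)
First, a caveat: this paper does not actually prove the statement --- it is quoted from Part II (\cite{DFSU_GE2}) --- so there is no in-text proof to compare against, and I am judging your argument on its own. The soft parts of your proposal are fine: $\mu\mapsto\mu(V)$ is lower semicontinuous for open $V$, the sets $V_{n,Q}$ decrease in $Q$, and $\M_T(X)$ is compact metrizable, so your set is indeed $G_\delta$ in a Baire space; likewise, periodic points of a hyperbolic toral endomorphism are rational because $A^n-I$ is nonsingular, and density of periodic orbit measures is standard (though for non-invertible endomorphisms the appeal to specification deserves a justification).

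The gap is in the density step, and it is precisely the point you advertise as your ``key conceptual point''. Your argument stands or falls with the claim that rational points are Liouville. That reading trivializes the theorem: if rationals are Liouville, every periodic orbit measure already lies in your set $\mathcal G$ and the statement reduces to Sigmund's theorem. But the result is presented as a genericity analogue of the FSU construction of a Gauss-invariant measure charging the Liouville \emph{numbers}, which are irrational by definition, and the surrounding discussion (infinite Lyapunov exponent, non-extremality as a nontrivial phenomenon) makes clear that the intended content concerns invariant measures giving full mass to \emph{irrational} Liouville points. Under that reading your density argument fails outright: a periodic orbit measure is supported on finitely many rational points and gives the irrational Liouville points measure zero. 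Note that your sets $V_{n,Q}$ contain every rational point (via the representations $k\pp_0/(kq_0)$ with $kq_0\geq Q$), so what you have actually shown to be comeager is $\{\mu:\mu((\Q^d/\Z^d)\cup L')=1\}$, where $L'$ is the set of irrational Liouville points --- not $\{\mu:\mu(L')=1\}$. The argument can be repaired, but it needs one more genuinely dynamical input: the set of non-atomic measures in $\M_T(X)$ is also comeager (it is $G_\delta$ by a compactness argument, and dense because, e.g., mixing a measure with Lebesgue and then invoking entropy-density of ergodic measures for systems with specification produces nearby ergodic measures of positive entropy, which cannot have atoms). Since any atom of a $T$-invariant measure lies on a rational (pre)periodic orbit, non-atomicity is equivalent to $\mu(\Q^d/\Z^d)=0$, and intersecting the two comeager sets gives the theorem for $L'$. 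Without that extra ingredient --- or a direct construction, near any periodic measure, of invariant measures concentrated on Liouville irrationals --- the proof as written does not establish the intended statement.
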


{\bf Outline of the paper.}
In Section \ref{sectionQDE} we give elementary arguments proving that every exact-dimensional measure of dimension $>d - 1$ is quasi-decaying (Theorem \ref{theoremexactdim}), and that every quasi-decaying measure is extremal (Theorem \ref{theoremQDE}). In Section \ref{sectionbasic} we demonstrate the basic properties of the quasi-decay condition described in Theorems \ref{theoremQDembedding} and \ref{theoremstabilityQD}. In Section \ref{sectionKLW} we prove Theorem \ref{theoremexponents}, describing the Diophantine properties of weakly quasi-decaying measures with respect to matrix approximation.

\draftnewpage
\section{Proof of Theorems \ref{theoremQDE} and \ref{theoremexactdim} ($\delta > d - 1 \;\Rightarrow\text{ Quasi-decaying }\Rightarrow\text{ Extremal}$)}
\label{sectionQDE}

\begin{definition}
\label{definitionUQD}
Given a measure $\mu$ on $\R^d$ and a set $\borel\subset\R^d$, we will say that $\mu$ is \emph{uniformly quasi-decaying (resp. uniformly weakly quasi-decaying)} relative to $\borel$ if for all $\gamma > 0$, there exist $C_1,\alpha > 0$ such that for all $\xx\in \borel$, $0 < \rho \leq 1$, $0 < \beta \leq \rho^\gamma$, and $\LL\in\Hyp$, if $B = B(\xx,\rho)$ then \eqref{QDwithE} (resp. \eqref{weakQDwithE}) holds.
\end{definition}

\begin{lemma}
\label{lemmaunifQD}
A measure $\mu$ is (weakly) quasi-decaying if and only if there exists a sequence $(\borel_\Index)_\Index$ such that $\mu(\R^d\butnot\bigcup_\Index \borel_\Index) = 0$ and for each $\Index$, $\mu$ is uniformly (weakly) quasi-decaying relative to $\borel_\Index$.
\end{lemma}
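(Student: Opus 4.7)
The forward direction is immediate: uniform (weak) quasi-decay relative to $\borel_\Index$ implies that the defining inequality \eqref{QDwithE} (resp.\ \eqref{weakQDwithE}) holds at every $\xx \in \borel_\Index$, hence in particular at $\mu$-a.e.\ such $\xx$, so $(\borel_\Index)_\Index$ already witnesses (weak) quasi-decay. The reverse direction is the interesting one. Starting from a sequence $(\borel_\Index)_\Index$ as in Definition \ref{definitionQD}, I plan to refine each $\borel_\Index$ into countably many uniformly (weakly) quasi-decaying subsets via a standard exhaustion argument.

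Fix $F := \borel_\Index$. At $\mu$-a.e.\ $\xx \in F$ the defining inequality holds for every $\gamma > 0$ with some constants $C_1(\xx, \gamma), \alpha(\xx, \gamma) > 0$, and since $0 < \beta \leq \rho^\gamma \leq 1$ one can freely inflate $C_1$ and deflate $\alpha$ to integer--reciprocal form. Thus for each $j \in \N$ the quantity
\[
N(\xx, j) := \min\bigl\{n \in \N : \eqref{QDwithE} \text{ (or \eqref{weakQDwithE}) holds at } \xx \text{ with } \gamma = 1/j,\ C_1 = n,\ \alpha = 1/n\bigr\}
\]
is finite for $\mu$-a.e.\ $\xx \in F$. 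Let $G_{j,n} := \{\xx \in F : N(\xx, j) \leq n\}$, so that $\bigcup_n G_{j, n}$ is $\mu$-conull in $F$; for each $\ell, j \in \N$ pick $n(j, \ell)$ with $\mu(F \butnot G_{j, n(j,\ell)}) < 2^{-j}/\ell$, and set $E_\ell := \bigcap_j G_{j, n(j, \ell)}$. Subadditivity gives $\mu(F \butnot E_\ell) \leq 1/\ell$, hence $\bigcup_\ell E_\ell$ is $\mu$-conull in $F$. Uniform (weak) quasi-decay of $\mu$ on $E_\ell$ rests on the elementary observation that \emph{larger $\gamma$ gives a weaker condition}: for any $\gamma > 0$, pick $j$ with $1/j \leq \gamma$; since $\rho \leq 1$, the range $0 < \beta \leq \rho^\gamma$ lies in $0 < \beta \leq \rho^{1/j}$, so the uniform inequality on $E_\ell$ with $\gamma = 1/j$ and constants $n(j, \ell), 1/n(j, \ell)$ directly yields the inequality for the given $\gamma$ with the same constants. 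Letting $\Index$ vary gives the desired countable family $\{E_\ell : \Index, \ell \in \N\}$.

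The one non-routine step is the Borel measurability of $G_{j,n}$, whose defining condition is an uncountable conjunction over $(\LL, \rho, \beta) \in \Hyp \times (0,1] \times (0, \rho^{1/j}]$. My plan for handling this is to reduce to a countable subfamily: $\xx \mapsto \mu(B(\xx, \rho))$ is upper semicontinuous and $\Hyp$ is separable, so one can restrict $\LL$ to a countable dense subset and $\rho, \beta$ to rationals, making $G_{j,n}$ a countable intersection of Borel sets up to a $\mu$-null modification; this is the only place where care is required beyond the exhaustion itself.
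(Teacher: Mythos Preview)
Your argument is correct and follows essentially the same exhaustion strategy as the paper: stratify the points of each $\borel_\Index$ by the constants that work at $\gamma = 1/m$, then intersect over $m$ after choosing levels whose complements have summable measure. The paper's version is terser---it sets $\borel_{m,k}$ to be the points where the inequality holds with $\gamma = 1/m$ and $\alpha = 1/k$, picks $k_m$ with $\mu(\borel\butnot\borel_{m,k_m})\leq 2^{-m}$, and takes $\borel_\Index = \bigcap_{m>\Index}\borel_{m,k_m}$---but the idea is identical to your $G_{j,n}$ and $E_\ell$; you are simply more explicit about the role of $C_1$ and about the measurability of the level sets, both of which the paper passes over in silence.
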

\begin{proof}
It suffices to show that if $\mu$ is (weakly) quasi-decaying relative to $\borel$, then there exists a sequence $(\borel_\Index)_\Index$ such that $\mu(\borel\butnot\bigcup_\Index \borel_\Index) = 0$ and for each $\Index$, $\mu$ is uniformly (weakly) quasi-decaying relative $\borel_\Index$. Indeed, for each $m,k\in\N$ let $\borel_{m,k}$ be the set of all $\xx\in \borel$ such that \eqref{QDwithE} (resp. \eqref{weakQDwithE}) holds for all $0 < \rho\leq 1$, $0 < \beta \leq \rho^\gamma$, and $\LL\in\Hyp$, with $\gamma = 1/m$, $\alpha = 1/k$, and $B = B(\xx,\rho)$. Then for all $m$, $\mu(\borel\butnot\bigcup_k \borel_{m,k}) = 0$, so there exists $k_m\in\N$ such that $\mu(\borel\butnot \borel_{m,k_m}) \leq 2^{-m}$. Letting
\[
\borel_\Index \df \bigcap_{m > \Index} \borel_{m,k_m}
\]
completes the proof.
\end{proof}

Actually, uniformly quasi-decaying measures show up naturally in the analysis of exact dimensional measures:

\begin{proof}[Proof of Theorem \ref{theoremexactdim}]
Let $\mu$ be an exact dimensional measure on $\R^d$ of dimension $\delta > d - 1$, and we will show that $\mu$ is quasi-decaying. By Egoroff's theorem, there exists a sequence $(\borel_\Index)_\Index$ such that $\mu(\R^d\butnot \bigcup \borel_\Index) = 0$ and for all $\Index\in\N$, the limit \eqref{exactdim} holds uniformly on $\borel_\Index$. Fix $\Index$, and we will show that $\mu$ is uniformly quasi-decaying relative to $\borel_\Index$. Indeed, fix $\gamma > 0$, $\xx\in \borel_\Index$, $0 < \rho \leq 1$, $0 < \beta \leq \rho^\gamma$, and $\LL\in\Hyp$. Let $(\xx_i)_1^N$ be a maximal $\beta\rho$-separated\Footnote{Recall that a set $S$ is said to be \emph{$\rho$-separated} if for all distinct $x,y\in S$, we have $\dist(x,y)\geq \rho$.} subset of $\thickvar\LL{\beta\rho}\cap B(\xx,\rho)\cap \borel_\Index$, and let $\lambda$ denote Lebesgue measure on $\R^d$. Then
\[
N (\beta\rho)^d \asymp_\times \sum_{i = 1}^N \lambda\big(B(\xx_i,\beta\rho/2)\big)
\leq \lambda\left(\thickvar\LL{2\beta\rho}\cap B(\xx,2\rho)\right)
\asymp_\times \rho^{d - 1} (\beta\rho) = \beta\rho^d,
\]
so $N\lesssim_\times \beta^{-(d - 1)}$. On the other hand, for all $\epsilon > 0$ we have
\begin{align*}
\mu\big(\thickvar{\LL}{\beta\rho}\cap B(\xx,\rho)\cap \borel_\Index\big)
&\leq_\pt \sum_{i = 1}^N \mu\big(B(\xx_i,\beta\rho)\big)\\
&\lesssim_\times N (\beta\rho)^{\delta - \epsilon} \by{\eqref{exactdim}}\\
&\lesssim_\times \beta^{-(d - 1) + (\delta - \epsilon)}\rho^{s - \epsilon}\\
&\lesssim_\times \beta^{\delta - (d - 1) - \epsilon} \rho^{-2\epsilon} \mu\big(B(\xx,\rho)\big). \by{\eqref{exactdim}}
\end{align*}
Letting $\alpha = \delta - (d - 1) > 0$ and $\epsilon = \alpha/(1 + 2/\gamma) > 0$, since $\rho \geq \beta^{1/\gamma}$ we get
\[
\mu\big(\thickvar{\LL}{\beta\rho}\cap B(\xx,\rho)\cap \borel_\Index\big)
\lesssim_\times \beta^{\alpha/2} \mu\big(B(\xx,\rho)\big).
\qedhere\]
\end{proof}

Next, we prove Theorem \ref{theoremQDE}. By Lemma \ref{lemmaunifQD}, it suffices to demonstrate the following:

\begin{theorem}
Let $\mu$ be a measure which is uniformly quasi-decaying relative to a set $\borel\subset\R^d$. Then 
\[
\mu(\VWA_d\cap \borel) = 0 .
\]
\end{theorem}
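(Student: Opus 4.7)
The plan is to combine the Simplex Lemma of Pollington--Velani with the first Borel--Cantelli lemma, using uniform quasi-decay to bound the $\mu$-measure of neighborhoods of the hyperplanes produced by the Simplex Lemma. Write $\VWA_d = \bigcup_{n\in\N} W_{1/n}$ with $W_\epsilon \df \{\xx\in\R^d : \omega(\xx) > 1 + 1/d + \epsilon\}$, so it suffices to prove $\mu(W_\epsilon \cap \borel) = 0$ for each fixed $\epsilon > 0$. By local finiteness, we may assume $\borel$ is contained in a fixed bounded open set $U$. For each dyadic $Q = 2^k$, set
\[
A_Q \df \{\xx\in\R^d : \|\xx - \pp/q\| < Q^{-(1+1/d+\epsilon)} \text{ for some } \pp\in\Z^d \text{ and } q \in [Q, 2Q)\}.
\]
Any $\xx\in W_\epsilon\cap \borel$ admits infinitely many rational approximations $\pp_n/q_n$ with $q_n\to\infty$, giving $\xx \in A_{Q_n}$ for $Q_n = 2^{\lfloor\log_2 q_n\rfloor}$, so $W_\epsilon\cap \borel \subset \limsup_k A_{2^k}$. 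By Borel--Cantelli it thus suffices to prove $\sum_k \mu(A_{2^k} \cap \borel) < \infty$.

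Choose $\gamma \in (0,\, \epsilon/(1 + 1/d))$ and let $C_1, \alpha$ be the associated uniform quasi-decay constants. Set $\eta \df Q^{-(1+1/d+\epsilon)}$ and $\rho \df Q^{-\sigma}$ with $\sigma \df (1 + 1/d + \epsilon)/(1 + \gamma)$, so that $\sigma > 1 + 1/d$ and $\eta = \rho^{1+\gamma}$. Fix a maximal $\rho$-separated set $\{\xx_j\}_j \subset \borel$; the balls $B(\xx_j, \rho)$ cover $\borel$ and have bounded overlap (a constant $N_d$ depending only on $d$). For $Q$ large enough, $\rho + \eta$ is smaller than the critical radius $c_d Q^{-(1+1/d)}$ from the Simplex Lemma, so $B(\xx_j, \rho + \eta)$ cannot contain $d+1$ affinely independent rationals with denominator $< 2Q$. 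Hence for each $j$, either $A_Q \cap B(\xx_j, \rho) = \emptyset$, or all rationals $\pp/q$ with $q < 2Q$ meeting $B(\xx_j, \rho+\eta)$ lie in a single affine hyperplane $\LL_j$, giving
\[
A_Q \cap B(\xx_j, \rho) \subset \thickvar{\LL_j}{\eta} \cap B(\xx_j, \rho).
\]

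Applying uniform quasi-decay with $\beta = \rho^\gamma$ (so $\beta\rho = \eta$, and the constraint $\beta \leq \rho^\gamma$ holds with equality) yields
\[
\mu\bigl(A_Q \cap B(\xx_j, \rho) \cap \borel\bigr) \leq C_1 \rho^{\alpha \gamma} \mu\bigl(B(\xx_j, \rho)\bigr).
\]
Summing over $j$ and using bounded overlap, $\sum_j \mu(B(\xx_j, \rho)) \leq N_d\, \mu(\thickvar{U}{1}) < \infty$; hence $\mu(A_Q \cap \borel) \lesssim_d Q^{-\alpha\gamma\sigma}$, which is summable over $Q = 2^k$. The main difficulty lies in the parameter balance: $\rho$ must be small enough that the Simplex Lemma forces coplanarity ($\rho \lesssim Q^{-1-1/d}$), yet large enough that $\beta = \eta/\rho \leq \rho^\gamma$ satisfies the quasi-decay constraint; the threshold $\gamma < \epsilon/(1 + 1/d)$ exactly reconciles both demands, and this is precisely why the $\beta \leq \rho^\gamma$ restriction in Definition \ref{definitionQD} (rather than requiring the bound for all $\beta$) is strong enough to yield extremality.
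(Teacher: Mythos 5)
Your argument is correct and is essentially the paper's own proof: both rest on covering $\borel$ by balls centered at a $\rho$-separated net, invoking the Simplex Lemma to confine all relevant rationals near each ball to a single hyperplane, applying uniform quasi-decay with $\beta = \rho^\gamma$ (so that the thickening $\eta = \rho^{1+\gamma}$ falls within the allowed range), and concluding via bounded overlap and Borel--Cantelli. The only differences are cosmetic choices of parametrization (dyadic denominator blocks $[Q,2Q)$ and the additive exponent $1+1/d+\epsilon$ versus the paper's blocks $[Q_{\Index-1},Q_\Index)$ with $Q_\Index = \epsilon_d H^{d\Index}$ and the multiplicative exponent $(1+1/d)(1+\gamma)$), and your explicit balance $\gamma < \epsilon/(1+1/d)$ correctly reconciles the Simplex-Lemma radius constraint with the $\beta\le\rho^\gamma$ restriction.
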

\begin{proof}
For each $\gamma > 0$ let
\[
W_\gamma \df \{\xx\in\R^d: \omega(\xx) > (1 + 1/d)(1 + \gamma)\},
\]
so that $\VWA_d = \bigcup_{\gamma > 0} W_\gamma$. Fix $\gamma > 0$, and we will show that $\mu(W_\gamma\cap \borel) = 0$. Since $\mu$ is uniformly quasi-decaying relative to $\borel$, there exist $C_1,\alpha > 0$ such that for all $\xx\in \borel$, $0 < \rho \leq 1$, $0 < \beta \leq \rho^\gamma$, and $\LL\in\Hyp$, if $B = B(\xx,\rho)$ then \eqref{QDwithE} holds.

To proceed further we recall the \emph{simplex lemma}, which is proven by a volume argument:

\begin{lemma}[{\cite[Lemma 4]{KTV}}]
\label{lemmasimplex}
Fix $d\in\N$. There exists $\epsilon_d > 0$ such that for all $\yy\in\R^d$ and $0 < \rho\leq 1$, the set
\[
S_{\yy,\rho} \df \left\{\frac{\pp}{q}\in\Q^d\cap B(\yy,\rho): q\leq \epsilon_d \rho^{-d/(d + 1)}\right\}
\]
is contained in an affine hyperplane $\LL_{\yy,\rho} \subset\R^d$.
\end{lemma}

\noindent Let $\epsilon_d > 0$ be as in Lemma \ref{lemmasimplex}. Fix $H > 1$, and for each $\Index\in\N$ let
\begin{align*}
Q_\Index &\df \epsilon_d H^{d\Index},&
\rho_\Index &\df \frac12H^{-(d + 1)\Index}.
\end{align*}
For each $\Index\in\N$ and $\yy\in\R^d$ let
\begin{align*}
S_{\Index,\yy} &\df S_{\yy,2\rho_\Index} = \left\{\frac{\pp}{q}\in\Q^d\cap B(\yy,2\rho_\Index): q\leq Q_\Index\right\},&
\LL_{\Index,\yy} &\df \LL_{\yy,2\rho_\Index}.
\end{align*}
Fix $\Index\in\N$, and let $E_\Index\subset W_\gamma\cap \borel$ be a maximal $\rho_\Index$-separated set.
\begin{claim}
\label{claimSbound}
\[
W_\gamma\cap \borel \subset \limsup_{\Index\to\infty}\bigcup_{\yy\in E_\Index}\left[\thickvar{\LL_{\Index,\yy}}{\rho_\Index^{1 + \gamma}}\cap B(\yy,\rho_\Index)\right].
\]
\end{claim}
\begin{subproof}
Fix $\xx\in W_\gamma\cap \borel$ and $\pp/q\in\Q^d$, and let $\Index\in\N$ satisfy $Q_{\Index - 1} \leq q < Q_\Index$. Then
\[
q^{-(1 + 1/d)(1 + \gamma)} \leq Q_{\Index - 1}^{-(1 + 1/d)(1 + \gamma)} \asymp_\times \rho_\Index^{1 + \gamma}
\]
and so since $\omega(\xx) > (1 + 1/d)(1 + \gamma)$, there exist infinitely many $\pp/q$ such that
\begin{equation}
\label{goodpq}
\|\xx - \pp/q\| < \rho_\Index^{1 + \gamma} < \rho_\Index.
\end{equation}
Fix $\pp/q$ satisfying \eqref{goodpq}. Since $\xx\in W_\gamma\cap \borel$, there exists $\yy\in E_\Index$ such that $\xx\in B(\yy,\rho_\Index)$. Then $\pp/q\in S_{\Index,\yy} \subset \LL_{\Index,\yy}$ and thus by \eqref{goodpq}, we have $\xx\in \thickvar{\LL_{\Index,y}}{\rho_\Index^{1 + \gamma}}$.
\end{subproof}

Without loss of generality we may assume that $\borel$ is bounded, so that $\mu\big(\thickvar \borel1\big) < \infty$. Then for each $\Index\in\N$, we have
\begin{align*}
&\sum_{\yy\in E_\Index} \mu\left(\thickvar{\LL_{\Index,\yy}}{\rho_\Index^{1 + \gamma}}\cap B(\yy,\rho_\Index)\cap \borel\right) \noreason \\
&\leq_\pt C_1 \rho_\Index^{\gamma\alpha} \sum_{\yy\in E_\Index} \mu\big(B(\yy,\rho_\Index)\big) \note{uniform quasi-decay}\\
&\lesssim_\times \rho_\Index^{\gamma\alpha} \mu\big(\thickvar \borel1\big) \asymp_\times \rho_\Index^{\gamma\alpha}. \note{bounded multiplicity}
\end{align*}
So by the Borel-Cantelli lemma,
\[
\mu\left(\limsup_{\Index\to\infty}\bigcup_{\yy\in E_\Index}\left[\thickvar{\LL_{\Index,\yy}}{\rho_\Index^{1 + \gamma}}\cap B(\yy,\rho_\Index)\cap \borel\right]\right) = 0,
\]
and so $\mu(W_\gamma\cap \borel) = 0$ by Claim \ref{claimSbound}.
\end{proof}

\section{Basic properties of the quasi-decay condition}
\label{sectionbasic}
Before proving Theorems \ref{theoremQDembedding} and \ref{theoremstabilityQD}, we need some preliminaries. The first, as mentioned in the introduction, is a substitute for the doubling condition which holds for every measure on a doubling metric space.

\begin{definition}
A metric space $X$ is \emph{doubling} if there exists a constant $N_X$ such that every ball $B(x,\rho) \subset X$ can be covered by at most $N_X$ balls of radius $\rho/2$.
\end{definition}

For example, $\R^d$ is a doubling metric space.

\begin{lemma}
\label{lemmaquasifederer}
Let $X$ be a doubling metric space, and let $\mu$ be a measure on $X$. Then for all $\epsilon > 0$, there exists $\delta > 0$ such that for $\mu$-a.e. $x\in X$, there exists $C_2 > 0$ such that for all $0 < \rho \leq 1$,
\begin{equation}
\label{quasifederer}
\mu\big(B(x,\rho^{1 - \delta})\big) \leq C_2 \rho^{-\epsilon} \mu\big(B(x,\rho)\big).
\end{equation}
\end{lemma}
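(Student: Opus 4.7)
The plan is to reduce the statement to the dyadic scale $\rho = 2^{-n}$, establish a uniform integral estimate by combining Fubini with the doubling geometry of $X$, and then finish via Markov's inequality and Borel--Cantelli.

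The main technical step is a reverse-doubling integral estimate: for any bounded Borel set $B_0 \subseteq X$ and any $0 < r \leq R$,
\[
\int_{B_0} \frac{\mu(B(x, R))}{\mu(B(x, r))} \, d\mu(x) \leq C_X \left(\frac{R}{r}\right)^{D} \mu(B_0^R),
\]
where $D = \log_2 N_X$ is the doubling dimension of $X$ and $B_0^R$ denotes the $R$-neighborhood of $B_0$. I would prove this by writing $\mu(B(x,R)) = \int \mathbf{1}_{B(x,R)}(y) \, d\mu(y)$, invoking Fubini to swap the order of integration (so $y$ ranges over $B_0^R$), and covering $B(y,R)$ by at most $C_X (R/r)^D$ balls $B(z_i, r/2)$ coming from iterated doubling. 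The triangle inequality forces $B(z_i, r/2) \subseteq B(x, r)$ whenever $x \in B(z_i, r/2)$, so $\mu(B(x,r))^{-1} \leq \mu(B(z_i, r/2))^{-1}$ on that piece, and each piece therefore contributes at most $1$ to the inner integral.

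With this estimate in hand, I would fix $\epsilon > 0$ and choose $\delta, \tilde\epsilon > 0$ satisfying $\delta D < \tilde\epsilon < \epsilon$. Applying Markov's inequality with $R = 2^{-n(1-\delta)}$ and $r = 2^{-n}$ to the bad sets
\[
G_n \df \bigl\{ x \in B_0 : \mu\bigl(B(x, 2^{-n(1-\delta)})\bigr) > 2^{n\tilde\epsilon}\, \mu\bigl(B(x, 2^{-n})\bigr)\bigr\}
\]
gives $\mu(G_n) \leq C_X \cdot 2^{n(\delta D - \tilde\epsilon)} \mu(B_0^1)$, which is summable in $n$. Borel--Cantelli then furnishes, for $\mu$-a.e.\ $x \in B_0$, a threshold $n_0(x)$ such that $\mu(B(x, 2^{-n(1-\delta)})) \leq 2^{n\tilde\epsilon}\, \mu(B(x, 2^{-n}))$ for all $n \geq n_0(x)$; exhausting $X$ by bounded sets extends this dyadic conclusion to $\mu$-a.e.\ $x \in X$.

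To pass from the dyadic conclusion to continuous scales, I would take $\delta' = \delta/2$ as the exponent in the lemma statement. Given small $\rho \in (0, 1]$, let $k$ be the least integer with $2^{-k} \leq \rho$; a short check yields $\rho^{1-\delta'} \leq 2^{-k(1-\delta)}$ for $\rho$ small enough, so $B(x, \rho^{1-\delta'}) \subseteq B(x, 2^{-k(1-\delta)})$ and $B(x, 2^{-k}) \subseteq B(x, \rho)$. The dyadic bound then gives $\mu(B(x, \rho^{1-\delta'})) \leq 2^{k\tilde\epsilon}\, \mu(B(x, \rho)) \leq C\, \rho^{-\epsilon}\, \mu(B(x, \rho))$, using $k \leq \log_2(1/\rho) + 1$ and $\tilde\epsilon < \epsilon$. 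Finitely many larger $\rho$ together with the multiplicative slack are absorbed into the constant $C_2 = C_2(x)$. The main obstacle is the reverse-doubling estimate in the second paragraph: since $\mu$ itself is not assumed to be doubling, the polynomial bound has to be extracted purely from the doubling geometry of the ambient space $X$, and the covering-with-inclusion observation is what makes this possible.
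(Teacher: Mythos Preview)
Your proof is correct and follows essentially the same strategy as the paper: reduce to dyadic scales, bound the measure of the ``bad'' sets using the doubling geometry of $X$, sum via Borel--Cantelli, and then interpolate back to continuous $\rho$. The only difference is in how the bad-set bound is obtained: you package it as a standalone integral inequality $\int_{B_0} \mu(B(x,R))/\mu(B(x,r))\,d\mu(x) \lesssim (R/r)^D \mu(B_0^R)$ and then apply Markov, whereas the paper works directly with a maximal $\rho_{n+1}$-separated subset of the bad set and bounds the resulting sum by a counting argument. Both arguments rest on the same Fubini-plus-doubling-cover mechanism (your inclusion $B(z_i,r/2)\subseteq B(x,r)$ plays the same role as the paper's bound on $\#(E_n\cap B(x,\rho_n^{1-\delta}))$), so the distinction is one of presentation rather than substance; your integral formulation is arguably a bit cleaner as a reusable lemma.
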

\begin{proof}
Fix $\epsilon > 0$, and let $\delta = \epsilon/(2\log_2(N_X)) > 0$, where $N_X$ is the doubling constant of $X$. For each $\Index\in\N$ let $\rho_\Index = 2^{-\Index}$ and let
\[
S_\Index \df \{x\in X: \mu\big(B(x,\rho_\Index^{1 - \delta})\big) > \rho_\Index^{-\epsilon}\mu\big(B(x,\rho_{\Index + 1})\big)\}.
\]
\begin{claim}
If $\borel\subset X$ is bounded then $\sum_{\Index\in\N}\mu(S_\Index\cap \borel) < \infty$.
\end{claim}
\begin{subproof}
Fix $\Index$, and let $\borel_\Index$ be a maximal $\rho_{\Index + 1}$-separated subset of $S_\Index\cap \borel$, so that $S_\Index\cap \borel\subset \thickvar{\borel_\Index}{\rho_{\Index + 1}}$. We have
\begin{align*}
\mu(S_\Index\cap \borel) &\leq \sum_{x\in \borel_\Index} \mu\big(B(x,\rho_{\Index + 1})\big)\\
&\leq \rho_\Index^\epsilon \sum_{x\in \borel_\Index} \mu\big(B(x,\rho_\Index^{1 - \delta})\big)\\
&= \rho_\Index^\epsilon \int \#(\borel_\Index\cap B(x,\rho_\Index^{1 - \delta})) \dee\mu(x)\\
&\leq \rho_\Index^\epsilon \mu\big(\thickvar \borel 1\big) \max_{x\in X} \#(\borel_\Index\cap B(x,\rho_\Index^{1 - \delta})).
\end{align*}
Fix $x\in X$. Repeatedly applying the doubling condition shows that $B(x,\rho_\Index^{1 - \delta})$ can be covered by at most $N_X^m$ balls of radius $\rho_{\Index + 1}/3$, where $m \df \lceil \log_2(6\rho_\Index^{-\delta})\rceil$. But each of these balls intersects $\borel_\Index$ at most once, since $\borel_\Index$ is $\rho_{\Index + 1}$-separated. So
\[
\max_{x\in X} \#(\borel_\Index\cap B(x,\rho_\Index^{1 - \delta})) \leq N_X^m \asymp_\times N_X^{\log_2(\rho_\Index^{-\delta})} = \rho_\Index^{-\epsilon/2}.
\]
Thus $\mu(S_\Index) \lesssim_\times \rho_\Index^{\epsilon/2}$.
\end{subproof}
So by the Borel--Cantelli lemma, for $\mu$-a.e. $x\in X$ we have $\#\{\Index\in\N:x\in S_\Index\} < \infty$. Fix such an $x$, and fix $0 < \rho \leq 1$. Let $\Index\in\N$ satisfy $\rho_{\Index + 1} \leq \rho < \rho_\Index$. If $\rho$ is small enough, then $x\notin S_\Index$, which implies
\[
\mu\big(B(x,\rho^{1 - \delta})\big) \leq \mu\big(B(x,\rho_\Index^{1 - \delta})\big) \leq \rho_\Index^{-\epsilon} \mu\big(B(x,\rho_{\Index + 1})\big) \leq 2^\epsilon \rho^{-\epsilon} \mu\big(B(x,\rho)\big),
\]
demonstrating \eqref{quasifederer}. Larger values of $\rho$ can be accomodated by changing the constant appropriately.
\end{proof}

Let us call a measure satisfying the conclusion of Lemma \ref{lemmaquasifederer} \emph{quasi-Federer}, so that Lemma \ref{lemmaquasifederer} says that any measure on a doubling metric space is quasi-Federer. For the purposes of this paper this is a somewhat silly definition, since every measure on $\R^d$ is quasi-Federer. However, the following refinements of the quasi-Federer notion distinguish nontrivial classes of measures on $\R^d$:

\begin{definition}
Let $X$ and $\mu$ be as in Lemma \ref{lemmaquasifederer}. Given $\borel\in X$, we will say that $\mu$ is \emph{uniformly quasi-Federer relative to $\borel$} if for all $\epsilon > 0$, there exist $C_2,\delta > 0$ such that for all $x\in\borel$ and $0 < \rho \leq 1$, \eqref{quasifederer} holds. (Note however that $\borel$ does not occur on the left hand side of \eqref{quasifederer}, in contrast to \eqref{QDwithE}.) Similarly, if $x\in X$, we will say that $\mu$ is \emph{quasi-Federer at $x$} if $\mu$ is uniformly quasi-Federer relative to $\{x\}$.

Note that Lemma \ref{lemmaquasifederer} implies that there exists a sequence of sets $(\borel_\Index)_\Index$ such that $\mu\big(X \butnot \bigcup \borel_\Index\big) = 0$ and for each $\Index$, $\mu$ is uniformly quasi-Federer relative to $\borel_\Index$. In particular, $\mu$ is quasi-Federer at $\mu$-a.e. $x\in X$.
\end{definition}

We need two more preliminary results. The following lemma is an immediate consequence of Definitions \ref{definitionQD} and \ref{definitionUQD}:
\begin{lemma}
\label{lemmaQD}
~
\begin{itemize}
\item[(i)] If $\mu$ is uniformly quasi-decaying (resp. uniformly weakly quasi-decaying) relative to $\borel\subset\R^d$, then for all $\gamma > 0$ there exists $\alpha = \alpha(\gamma,\mu) > 0$ such that for all $C > 0$, there exists $C_1 > 0$ such that for all $\xx \in \borel$, $0 < \rho \leq 1$, $\beta \leq C \rho^\gamma$, and $\LL\in\Hyp$, if $B = B(\xx,\rho)$ then \eqref{QDwithE} (resp. \eqref{weakQDwithE}) holds.
\item[(ii)] If $\mu$ is quasi-decaying (resp. weakly quasi-decaying) at $\xx\in\R^d$ relative to $\borel\subset\R^d$, then for all $\gamma > 0$ there exists $\alpha = \alpha(\gamma,\mu,\xx) > 0$ such that for all $C > 0$, there exists $C_1 > 0$ such that for all $0 < \rho \leq 1$, $\beta \leq C \rho^\gamma$, and $\LL\in\Hyp$, if $B = B(\xx,\rho)$ then \eqref{QDwithE} (resp. \eqref{weakQDwithE}) holds.
\end{itemize}
\end{lemma}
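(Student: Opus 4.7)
\medskip\noindent\textbf{Proof plan.}
The plan is to prove (i) and (ii) together via a parallel-strip covering argument; they differ only in that (ii) is pointwise at a single $\xx$ while (i) is uniform over $\borel$, and every estimate below is pointwise in $\xx$. Given $\gamma > 0$, I will take $\alpha(\gamma, \mu) := \alpha$ to be the constant supplied by the hypothesis for this same $\gamma$, and given any $C > 0$ I will construct the desired $C_1' = C_1'(C)$ by handling two regimes.

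The regime $\beta \geq 1/2$ will be trivial: $\mu(\text{LHS}) \leq \mu(B) \leq 2^\alpha \beta^\alpha \mu(B)$, so any $C_1' \geq 2^\alpha$ suffices. For $\beta < 1/2$, I will set $\beta' := \beta/(2C)$; since $\beta \leq C\rho^\gamma$, this gives $\beta' \leq \rho^\gamma/2 \leq \rho^\gamma$, allowing me to apply the hypothesis with thickening parameter $\beta'$. I will then cover $\thickvar{\LL}{\beta\rho}$ (quasi-decaying case), or $\thickvar{\LL}{\beta D}$ with $D := \|\dist_\LL\|_{\mu,B}$ (weakly quasi-decaying case), by at most $\lceil 2C \rceil$ parallel translates $\LL_i$ of $\LL$, each generating a strip of half-width $\beta'\rho$ (respectively $\beta' D$), with signed offsets satisfying $|t_i| \leq \beta\rho$ (respectively $|t_i| \leq \beta D$).

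For the quasi-decaying case, I will apply the hypothesis directly to each $\LL_i$ and sum:
\[
\mu\bigl(\thickvar{\LL}{\beta\rho} \cap B \cap \borel\bigr) \leq \sum_i \mu\bigl(\thickvar{\LL_i}{\beta'\rho} \cap B \cap \borel\bigr) \leq \lceil 2C \rceil \, C_1 (\beta')^\alpha \, \mu(B),
\]
which is bounded by a constant depending only on $(C, \alpha)$ times $C_1 \beta^\alpha \mu(B)$, as required.

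The weakly quasi-decaying case requires one additional comparison, since the hypothesis applied with $\LL_i$ in place of $\LL$ controls $\mu\bigl(\thickvar{\LL_i}{\tilde\beta D_i} \cap B \cap \borel\bigr)$ with $D_i := \|\dist_{\LL_i}\|_{\mu,B}$, whereas the cover supplies strips of the form $\thickvar{\LL_i}{\beta' D}$. To bridge this gap, I will pick $y^* \in B \cap \Supp(\mu)$ with $\dist(y^*,\LL) = D$ by compactness (the degenerate cases $\mu(B) = 0$ and $D = 0$ are trivial: if $D = 0$, letting $\beta \to 0$ in the hypothesis forces $\mu(\LL \cap B \cap \borel) = 0$). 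Since $|t_i| \leq \beta D \leq D/2$, the triangle inequality yields $\dist(y^*, \LL_i) \geq D - |t_i| \geq D/2$, hence $D_i \geq D/2$. Therefore $\thickvar{\LL_i}{\beta' D} \subseteq \thickvar{\LL_i}{2\beta' D_i}$, and since $2\beta' = \beta/C \leq \rho^\gamma$ the hypothesis gives $\mu\bigl(\thickvar{\LL_i}{\beta' D} \cap B \cap \borel\bigr) \leq C_1 (\beta/C)^\alpha \mu(B)$; summing over the $\lceil 2C \rceil$ translates completes the bound. The $D_i \geq D/2$ comparison is the only slightly delicate step; everything else is bookkeeping.
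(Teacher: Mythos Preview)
Your proof is correct. The paper itself does not give a proof; it simply declares the lemma ``an immediate consequence of Definitions~\ref{definitionQD} and~\ref{definitionUQD}.''

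The argument the authors presumably have in mind is shorter than yours: given $\gamma>0$, apply the definition with a smaller exponent, say $\gamma'=\gamma/2$, to obtain constants $\alpha,C_1$. For any $C>0$ and any $\beta\le C\rho^\gamma$, either $\beta\le\rho^{\gamma'}$, in which case the hypothesis for $\gamma'$ gives \eqref{QDwithE} (resp.\ \eqref{weakQDwithE}) directly; or $\rho^{\gamma'}<\beta\le C\rho^\gamma$, which forces $\rho> C^{-2/\gamma}$ and hence $\beta>\rho^{\gamma'}>C^{-1}$, so the trivial bound $\mu(\cdot)\le\mu(B)\le C^{\alpha}\beta^{\alpha}\mu(B)$ suffices. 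Taking $C_1'=\max(C_1,C^{\alpha})$ finishes the argument, and $\alpha$ visibly does not depend on $C$. This avoids the strip-covering and the $D_i\ge D/2$ comparison entirely.

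Your route---covering the $\beta$-strip by $\lceil 2C\rceil$ parallel $\beta'$-strips with $\beta'=\beta/(2C)$---is a genuine alternative and has one small advantage: it uses the hypothesis at the \emph{same} $\gamma$, so the output $\alpha$ equals the $\alpha(\gamma)$ already supplied by the definition rather than $\alpha(\gamma/2)$. The weakly quasi-decaying case requires the extra step you identified (bounding $\|d_{\LL_i}\|_{\mu,B}$ from below via a point realizing $\|d_\LL\|_{\mu,B}$), which is handled correctly. The degenerate cases $\mu(B)=0$ and $D=0$ are also treated properly. So your argument is sound, just more elaborate than the one-line dismissal in the paper.
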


Our last preliminary result is a generalization of the Lebesgue differentiation theorem:

\begin{theorem}[{\cite[Theorem 9.1]{Simmons1}}]
\label{theoremLD}
Let $\mu$ and $\nu$ be measures on $\R^d$ such that $\nu\lessless\mu$. Then the function
\begin{equation}
\label{lebesguederivative}
f(x) \df \lim_{\rho\to 0}\frac{\nu\big(B(x,\rho)\big)}{\mu\big(B(x,\rho)\big)}
\end{equation}
is well-defined for $\mu$-almost every $\xx\in \R^d$. Moreover, $\nu = f\mu$, i.e. $f$ is a Radon-Nikodym derivative of $\nu$ with respect to $\mu$.
\end{theorem}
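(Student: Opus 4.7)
The plan is to reduce Theorem \ref{theoremLD} to the differentiation theorem for locally $\mu$-integrable functions via the Radon-Nikodym theorem, and then to prove that reduction via a Hardy-Littlewood type weak maximal inequality combined with approximation by continuous functions. Since $\nu\lessless\mu$ and both are locally finite Borel measures on $\R^d$, the Radon-Nikodym theorem provides a locally $\mu$-integrable $g:\R^d\to[0,\infty]$ with $\nu = g\mu$. It therefore suffices to show
\[
\lim_{\rho\to 0}\frac{1}{\mu(B(x,\rho))}\int_{B(x,\rho)} g\,\dee\mu = g(x) \quad\text{for $\mu$-a.e. } x,
\]
which will force the limit \eqref{lebesguederivative} to exist and equal $g(x)$ $\mu$-a.e., as required.

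The central step is the weak-type estimate
\[
\mu\big(\{x : M_\mu h(x) > \lambda\}\big) \leq \frac{C_d}{\lambda}\|h\|_{L^1(\mu)}
\]
for $h\in L^1(\mu)$, where $M_\mu h(x) \df \sup_{\rho>0}\frac{1}{\mu(B(x,\rho))}\int_{B(x,\rho)}|h|\,\dee\mu$. To prove this, for each $x$ in the superlevel set choose a ball $B(x,\rho_x)$ witnessing $M_\mu h(x) > \lambda$; then by the Besicovitch covering theorem the family $\{B(x,\rho_x)\}$ admits a subcover with multiplicity bounded by a dimensional constant $C_d$, and summing the defining inequality $\int_{B(x,\rho_x)}|h|\,\dee\mu > \lambda\mu(B(x,\rho_x))$ over this subcover yields the estimate. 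This is the one place where the Euclidean structure of the ambient space matters essentially: Besicovitch's theorem fails in a general metric space, and is precisely what allows one to dispense with the doubling hypothesis usually invoked to run Vitali covering arguments.

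Given the weak maximal inequality, the conclusion follows by the standard density argument. Fix a compact $K\subset \R^d$; by regularity of $\mu$ on $K$, choose continuous $h_n$ with $\|g - h_n\|_{L^1(\mu\given K)} \to 0$. For continuous $h_n$ the averaged limit equals $h_n(x)$ pointwise by uniform continuity. Writing $g = h_n + (g-h_n)$, the set where the averaged limit of $g$ differs from $g(x)$ by more than $\epsilon$ is contained in $\{M_\mu(g - h_n) > \epsilon\} \cup \{|g - h_n| > \epsilon\}$, which has $\mu$-measure at most $(C_d + 1)\|g - h_n\|_{L^1(\mu\given K)}/\epsilon \to 0$. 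A diagonal subsequence argument, followed by a countable exhaustion of $\R^d$ by compact sets, gives convergence $\mu$-a.e. globally. The only genuinely hard ingredient is the Besicovitch covering theorem itself, a classical but combinatorially delicate result particular to $\R^d$; the rest of the argument is routine approximation.
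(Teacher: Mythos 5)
Your argument is correct, and it is the standard proof of the differentiation theorem for arbitrary Radon measures on $\R^d$ (as in Mattila's book, Chapter 2): Radon--Nikodym reduction, Besicovitch covering theorem to get the weak $(1,1)$ maximal inequality without any doubling hypothesis on $\mu$, and the usual density-of-continuous-functions splitting. Note that the paper does not prove Theorem \ref{theoremLD} at all --- it is quoted from \cite[Theorem 9.1]{Simmons1} --- so there is no internal proof to compare against; your write-up supplies exactly the argument one would expect that reference to contain, and you correctly identify Besicovitch as the one ingredient that is genuinely specific to $\R^d$.

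Two small points worth tightening if you write this out in full. First, Besicovitch requires the radii $\rho_x$ to be uniformly bounded (or the set of centers bounded), so you should either truncate the supremum in $M_\mu h$ to $0<\rho<R$ and let $R\to\infty$, or localize to bounded sets from the start; relatedly, the superlevel set $\{M_\mu h>\lambda\}$ need not be open when closed balls are used, so one applies the covering argument to an arbitrary compact subset of it (or works with outer measure). Second, the ``diagonal subsequence'' is unnecessary: for each fixed $\epsilon>0$ the exceptional set $\{\limsup_\rho|A_\rho g - g|>\epsilon\}$ is contained in $\{M_\mu(g-h_n)>\epsilon/2\}\cup\{|g-h_n|>\epsilon/2\}$ for \emph{every} $n$, hence has measure zero directly, and one then takes a countable union over $\epsilon=1/k$. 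Finally, the statement only claims the limit exists $\mu$-a.e., which your argument delivers on $\Supp(\mu)$ (where $\mu(B(x,\rho))>0$ for all $\rho$), and $\R^d\butnot\Supp(\mu)$ is $\mu$-null, so nothing further is needed.
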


We are now ready to prove Theorem \ref{theoremstabilityQD}; it clearly follows from Lemma \ref{lemmaquasifederer} and Theorem \ref{theoremLD} together with the following:

\begin{proposition}
\label{propositionstabilityQD2}~
\begin{itemize}
\item[(i)] Let $\|\cdot\|_1$ and $\|\cdot\|_2$ be two norms on $\R^d$. If $\mu$ is (weakly) quasi-decaying and quasi-Federer at $\xx\in\R^d$ relative to $\borel\subset\R^d$ with respect to the norm $\|\cdot\|_1$, then $\mu$ is also (weakly) quasi-decaying at $\xx$ relative to $\borel$ with respect to the norm $\|\cdot\|_2$.
\item[(ii)] For each $i = 1,2$, fix $d_i\in\N$, and let $\mu_i$ be a measure on $\R^{d_i}$ which is (weakly) quasi-decaying and quasi-Federer at a point $\xx_i\in\R^{d_i}$ relative to a set $\borel_i\subset\R^{d_i}$. Let $d = d_1 + d_2$. Then $\mu = \mu_1\times \mu_2$ is (weakly) quasi-decaying at $\xx = (\xx_1,\xx_2)\in \R^d$ relative to $\borel = \borel_1\times \borel_2 \subset \R^d$.
\item[(iii)] Fix $\epsilon > 0$. Let $\mu_1$ be a measure on an open set $U_1\subset\R^d$ which is uniformly quasi-decaying relative to a set $\borel_1\subset U_1$. Let $\psi:U_1\to U_2\subset\R^d$ be a $\CC^{1 + \epsilon}$ diffeomorphism. Then if $\mu_2 = \psi(\mu_1)$ is quasi-Federer at $\xx_2\in U_2$, then $\mu_2$ is also quasi-decaying at $\xx_2$ relative to $\borel_2 = \psi(\borel_1)$.
\item[(iv)] Let $\mu$ be a measure on $\R^d$, and let $U\subset\R^d$ be an open set. Then $\mu$ is (weakly) quasi-decaying at a point $\xx\in U$ relative to a set $\borel\subset\R^d$ if and only if $\mu\given U$ is (weakly) quasi-decaying at $\xx$ relative to $\borel$.
\item[(v)] Let $\mu$ be a measure on $\R^d$ which is (weakly) quasi-decaying at a point $\xx\in\R^d$ relative to a set $\borel\subset\R^d$, and let $\nu$ satisfy $\nu\lesssim_\times \mu$ on $\borel$. If the limit \eqref{lebesguederivative} exists and is positive, then $\nu$ is (weakly) quasi-decaying at $\xx$ relative to $\borel$.
\end{itemize}
\end{proposition}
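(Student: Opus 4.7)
The plan is to dispatch parts (iv) and (v) as immediate consequences of the definitions, handle (i) and (ii) by standard rescaling and Fubini arguments that use Lemma \ref{lemmaquasifederer} to absorb constant-factor changes of ball radii, and treat (iii) by a covering argument that linearizes $\psi$ on sub-balls of an optimally balanced radius. For (iv), the nontrivial direction follows because $\xx \in U$ open forces $B(\xx, \rho) \subset U$ for all sufficiently small $\rho$, so $\mu\given U$ and $\mu$ agree on every ball and hyperplane thickening appearing in the (weakly) quasi-decay inequality at $\xx$. For (v), the existence and positivity of the limit in \eqref{lebesguederivative} gives $\nu\big(B(\xx, \rho)\big) \asymp_\times \mu\big(B(\xx, \rho)\big)$ for small $\rho$; combined with the hypothesis $\nu \lesssim_\times \mu$ on $\borel$, this transfers the (weakly) quasi-decay inequality from $\mu$ to $\nu$ at $\xx$ with the same exponent.

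For (i), I use the equivalence of the two norms with some constant $K$: both $B_2(\xx, \rho)$ and the corresponding hyperplane thickening are contained in their norm-$1$ versions rescaled by $K$, so applying quasi-decay of $\mu$ in norm $1$ at scale $K\rho$ gives a bound involving $\mu\big(B_1(\xx, K\rho)\big)$, which Lemma \ref{lemmaquasifederer} lets me replace by $\mu\big(B_2(\xx, \rho)\big)$ at the cost of a factor $\rho^{-\eta}$ for arbitrarily small $\eta$; since $\rho \geq \beta^{1/\gamma}$, this becomes $\beta^{-\eta/\gamma}$ and is absorbed into $\beta^{\alpha'}$ for a slightly smaller exponent. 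For (ii), after switching to the sup norm via (i) so that $B(\xx, \rho) = B_1(\xx_1, \rho) \times B_2(\xx_2, \rho)$, I split a hyperplane in $\R^{d_1 + d_2}$ according to its dominant direction and observe that each horizontal slice $\thickvar\LL{\beta\rho} \cap \big(\R^{d_1} \times \{\yy_2\}\big)$ is contained in $\thickvar{\LL_{\yy_2}}{C\beta\rho}$ for a hyperplane $\LL_{\yy_2} \subset \R^{d_1}$; quasi-decay of $\mu_1$ at $\xx_1$ applied slicewise, followed by Fubini, then gives the product inequality. The weak version additionally requires comparing $\|d_{\LL_{\yy_2}}\|_{\mu_1, B_1}$ to $\|d_\LL\|_{\mu, B}$, which can be arranged by integrating only over $\yy_2$ in a neighborhood of a slice realizing the supremum.

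Part (iii) is the main obstacle. Writing $\psi(\zz) = \xx_2 + D\psi(\xx_1)(\zz - \xx_1) + R(\zz)$ with $\|R(\zz)\| \lesssim \|\zz - \xx_1\|^{1+\epsilon}$, the naive pullback of $\thickvar\LL{\beta\rho}$ over the whole ball $B_1(\xx_1, C\rho)$ is only contained in a $C\big(\beta\rho + \rho^{1+\epsilon}\big)$-thickening of an approximating hyperplane, and the excess $\rho^{1+\epsilon}$ cannot be absorbed into $\beta$ when $\beta \ll \rho^\epsilon$. To overcome this, I cover $\psi^{-1}(\thickvar\LL{\beta\rho}) \cap B_1(\xx_1, C\rho) \cap \borel_1$ by balls $B_1(\yy_i, r)$ centered at points $\yy_i \in \borel_1$ of the balanced radius $r := (\beta\rho)^{1/(1+\epsilon)}$, chosen precisely so that the linearization error $O(r^{1+\epsilon}) = O(\beta\rho)$ matches the thickness. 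On each such sub-ball, linearizing $\psi$ at $\yy_i$ yields $\psi^{-1}(\thickvar\LL{\beta\rho}) \cap B_1(\yy_i, r) \subseteq \thickvar{\LL_i}{C'\beta\rho} = \thickvar{\LL_i}{C' r^\epsilon \cdot r}$ for a suitable hyperplane $\LL_i$, and uniform quasi-decay of $\mu_1$ with parameter $\gamma = \epsilon$ (using Lemma \ref{lemmaQD}(i) to absorb $C'$) bounds its $\mu_1$-mass by $(r^\epsilon)^\alpha \mu_1\big(B_1(\yy_i, r)\big)$. Summing over the bounded-multiplicity cover and invoking quasi-Federer of $\mu_2$ at $\xx_2$ (which lifts to $\mu_1$ at $\xx_1$ via the locally bi-Lipschitz $\psi$) to replace $\mu_1\big(B_1(\xx_1, C''\rho)\big)$ by $\mu_2\big(B_2(\xx_2, \rho)\big)$ finally yields quasi-decay for $\mu_2$ at $\xx_2$ with exponent $\alpha' = \epsilon\alpha/(1 + \epsilon)$.
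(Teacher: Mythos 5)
Your overall architecture matches the paper's: (iv) and (v) are read off from the definitions, (i) is norm-equivalence plus quasi-Federer to absorb the dilation of the ball, (ii) is a Fubini/slicing argument in the sup norm, and (iii) is linearization plus a covering argument. Your (iii) is a legitimate self-contained alternative: the paper instead defers to the proof of Proposition \ref{propositionQDembedding} (i.e.\ to Lemma \ref{lemmaZPbeta} with $\ell=1$), whereas you pick the covering radius $r=(\beta\rho)^{1/(1+\epsilon)}$ directly so that the $\CC^{1+\epsilon}$ linearization error matches the thickness $\beta\rho$; this is the same mechanism with a different, arguably more transparent, parametrization. One regime to add there: when $\beta>\rho^{\epsilon}$ you have $r>\rho$, and a single linearization over $B(\xx_1,C\rho)$ already suffices, so the subdivision is only needed when $\beta\leq\rho^\epsilon$; also the final passage from $\mu_1\big(B(\xx_1,C''\rho)\big)$ to $\mu_2\big(B(\xx_2,\rho)\big)$ costs a factor $\rho^{-\eta}$, which is absorbed exactly as in your part (i).

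The one genuine gap is the weak case of (ii). Writing $\LL=\{\yy:\zz\cdot\yy=c\}$ and $\sigma=\|d_\LL\|_{\mu,B}$, you must bound $\mu\big(\thickvar\LL{\beta\sigma}\cap B\cap\borel\big)$ by integrating over \emph{all} slices, so for \emph{every} $\yy_1\in B_1$ you need the slice hyperplane $\LL_{\yy_1}\subset\R^{d_2}$ to satisfy $\|d_{\LL_{\yy_1}}\|_{\mu_2,B_2}\gtrsim_\times\sigma/\|\zz_2\|_1$, so that the slice of $\thickvar\LL{\beta\sigma}$ lies in $\thickvar{\LL_{\yy_1}}{O(\beta)\|d_{\LL_{\yy_1}}\|_{\mu_2,B_2}}$. ``Integrating only over $\yy_2$ in a neighborhood of a slice realizing the supremum'' neither establishes this uniform lower bound nor covers the set whose measure you must control. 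The correct device (as in the paper) is: choose $\aa,\bb\in B\cap\Supp(\mu)$ with $d_\LL(\aa)\leq(1/3)\sigma\leq(2/3)\sigma\leq d_\LL(\bb)$, so $|\zz\cdot\bb-\zz\cdot\aa|\geq(1/3)\sigma$; without loss of generality the second-factor contribution satisfies $|\zz_2\cdot\bb_2-\zz_2\cdot\aa_2|\geq(1/6)\sigma$ (note that in the weak case this, not the size of $\|\zz_i\|$, dictates which factor to slice over). Since $d_{\LL_{\yy_1}}(\yy_2)=|\zz_1\cdot\yy_1+\zz_2\cdot\yy_2-c|/\|\zz_2\|_1$, it follows that for every $\yy_1$ one of $\aa_2,\bb_2$ lies at distance at least $(1/12)\sigma/\|\zz_2\|_1$ from $\LL_{\yy_1}$, which is the uniform lower bound you need; with that substituted, your Fubini argument goes through.
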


\begin{proof}~
\begin{itemize}
\item[(i)]
Let $C > 0$ be the implied constant in the asymptotic $\|\cdot\|_1 \asymp_\times \|\cdot\|_2$, which holds because any two norms on $\R^d$ are equivalent. Fix $\gamma > 0$, and let $\alpha \df \alpha(\gamma,\mu,\xx) > 0$ be as in Lemma \ref{lemmaQD}. Fix $0 < \rho\leq 1$, $0 < \beta \leq \rho^\gamma$, and $\LL\in\Hyp$. Note that
\[
\|d_\LL^{(2)}\|_{\mu,B_2(\xx,\rho)} \leq C\|d_\LL^{(1)}\|_{\mu,B_1(\xx,C\rho)},
\]
where $B_i(\xx,\rho)$ denotes the ball $B(\xx,\rho)$ taken with respect to the norm $\|\cdot\|_i$, and similarly for $d_\LL^{(i)}$. If $\mu$ is weakly quasi-decaying, then
\begin{align*}
&\mu\big(\NN_2(\LL,\beta\|d_\LL^{(2)}\|_{\mu,B_2(\xx,\rho)})\cap B_2(\xx,\rho)\cap \borel\big) \noreason\\
&\leq_\pt \mu\big(\NN_1(\LL,\beta C^2\|d_\LL^{(1)}\|_{\mu,B_1(\xx,C\rho)})\cap B_1(\xx,C\rho)\cap \borel\big) \noreason\\
&\lesssim_\times \beta^\alpha\mu\big(B_1(\xx,C\rho)\big) \by{Lemma \ref{lemmaQD}}\\
&\lesssim_\times \beta^{\alpha/2} \mu\big(B_2(\xx,\rho)\big)/ \since{$\mu$ is quasi-Federer at $\xx$}
\end{align*}
If $\mu$ is quasi-decaying, then a similar argument shows that
\[
\mu\big(\NN_2(\LL,\beta\rho)\cap B_2(\xx,\rho)\cap \borel\big) \lesssim_\times \beta^{\alpha/2} \mu\big(B_2(\xx,\rho)\big).
\]
\item[(ii)]
By part (i), we can use any norm on $\R^d = \R^{d_1}\times \R^{d_2}$ in the proof. It is convenient to use the max norm $\|\cdot\|_\infty$, so that $B(\xx,\rho) = B(\xx_1,\rho)\times B(\xx_2,\rho)$ for all $\rho > 0$. Fix $\gamma > 0$, let $\alpha_i = \alpha(\gamma,\mu_i,\xx_i)$ be as in Lemma \ref{lemmaQD}, and let $\alpha = \alpha_1\wedge\alpha_2 > 0$. Fix $0 < \rho \leq 1$, $0 < \beta \leq \rho^\gamma$, and $\LL\in\Hyp$. Write $B_i = B(\xx_i,\rho)$ and $B = B_1\times B_2$. There exist $\zz = (\zz_1,\zz_2)\in\R^d\butnot\{\0\}$ and $c\in\R$ such that
\[
\LL = \left\{\yy\in\R^d: \zz\cdot\yy = c \right\}.
\]
Without loss of generality suppose $\|\zz\|_1 = 1$. For each $\yy_1\in\R^{d_1}$, let
\[
\LL_{\yy_1} = \{\yy_2\in\R^{d_2} : (\yy_1,\yy_2)\in\LL\} = \{\yy_2\in\R^{d_2} : \zz_1\cdot \yy_1 + \zz_2\cdot\yy_2 = c\}.
\]
Note that for all $\yy = (\yy_1,\yy_2)\in\R^d$,
\begin{align} \label{dLz}
d_\LL(\yy) &= |\zz\cdot\yy - c|\\ \label{dLyz}
d_{\LL_{\yy_1}}(\yy_2) &= |\zz_1\cdot\yy_1 + \zz_2\cdot\yy_2 - c||/\|\zz_2\|_1.
\end{align}
In particular
\begin{equation}
\label{comparison}
d_{\LL_{\yy_1}}(\yy_2) = d_\LL(\yy_1,\yy_2) / \|\zz_2\|_1.
\end{equation}
We divide into cases:\\
\item[(iia)] Quasi-decaying case. Since $\|\zz_1\|_1 + \|\zz_2\|_1 = \|\zz\|_1 = 1$, there exists $i = 1,2$ such that $\|\zz_i\|_1\geq 1/2$. Without loss of generality, suppose $\|\zz_2\|_1\geq 1/2$. Then
\begin{align*}
&\mu\big(\thickvar\LL{\beta\rho}\cap B\cap \borel\big)\\
&=_\pt \int_{B_1\cap \borel_1} \mu_2\big(\big\{\yy_2\in B_2\cap \borel_2 : d_\LL(\yy_1,\yy_2)\leq \beta\rho\big\}\big) \;\dee\mu_1(\yy_1)\noreason\\
&\leq_\pt \int_{B_1} \mu_2\big(\thickvar{\LL_{\yy_1}}{2\beta\rho} \cap B_2\cap \borel_2\big) \;\dee\mu_1(\yy_1) \by{\eqref{comparison}}\\
&\lesssim_\times \beta^\alpha \int_{B_1} \mu_2(B_2) \;\dee\mu_1(\yy_1) = \beta^\alpha \mu(B). \note{Lemma \ref{lemmaQD}}
\end{align*}
Thus $\mu$ is quasi-decaying at $\xx$ relative to $\borel$.
\item[(iib)] Weakly quasi-decaying case. Let $\sigma = \|d_\LL\|_{\mu,B}$. We can assume that
\[
\thickvar\LL{(1/3)\sigma}\cap B\cap\Supp(\mu) \neq \emptyset,
\]
as otherwise \eqref{weakQDwithE} holds trivially. Then there exist $\aa,\bb\in B\cap\Supp(\mu)$ such that $d_\LL(\aa) \leq (1/3)\sigma \leq (2/3)\sigma \leq d_\LL(\bb)$. So by \eqref{dLz},
\[
|\zz\cdot \bb - \zz\cdot\aa| \geq (1/3)\sigma.
\]
Without loss of generality, we may suppose that
\[
|\zz_2\cdot\bb_2 - \zz_2\cdot\aa_2| \geq (1/6)\sigma.
\]
Then for all $\yy_1\in \R^d$, by \eqref{dLyz} we have
\[
d_{\LL_{\yy_1}}(\aa) + d_{\LL_{\yy_1}}(\bb) \geq (1/6)\sigma/\|\zz_2\|_1
\]
and thus
\[
\|d_{\LL_{\yy_1}}\|_{\mu_2,B_2} \geq (1/12)\sigma/\|\zz_2\|_1.
\]
Applying \eqref{comparison} gives
\[
\{\yy_2\in\R^{d_2}: (\yy_1,\yy_2)\in\thickvar\LL{\beta\sigma}\}
\subset \thickvar{\LL_{\yy_1}}{12\beta\|d_{\LL_{\yy_1}}\|_{\mu_2,B_2}},
\]
so
\begin{align*}
&\mu\big(\thickvar\LL{\beta\sigma}\cap B\cap \borel\big)\\
&=_\pt \int_{B_1\cap \borel_1} \mu_2\big(\big\{\yy_2\in B_2\cap \borel_2 : d_\LL(\yy_1,\yy_2)\leq \beta\sigma\big\}\big) \;\dee\mu_1(\yy_1)\noreason\\
&\leq_\pt \int_{B_1} \mu_2\big(\thickvar{\LL_{\yy_1}}{12\beta\|d_{\LL_{\yy_1}}\|_{\mu_2,B_2}} \cap B_2\cap \borel_2\big) \;\dee\mu_1(\yy_1) \by{\eqref{comparison}}\\
&\lesssim_\times \beta^\alpha \int_{B_1} \mu_2(B_2) \;\dee\mu_1(\yy_1) = \beta^\alpha \mu(B). \note{Lemma \ref{lemmaQD}}
\end{align*}
\item[(iii)]
The proof of (iii) is similar to the proof of Proposition \ref{propositionQDembedding} below. More precisely, in that proof we can replace $\|d_\LL\|_{\mu_2,B_2}$ by $\|d_\LL\|_{U_2\cap B_2}$ without affecting the argument. Here $\|d_\LL\|_B \df \sup_B \dist(\cdot,\LL)$. Since $U_2$ is open, for $\rho$ sufficiently small we have $\|d_\LL\|_{U_2\cap B_2} = \|d_\LL\|_{B_2} \geq \rho$. Thus in this case, the proof of Proposition \ref{propositionQDembedding} actually proves \eqref{QDwithE} rather than just \eqref{weakQDwithE}.
\item[(iv)]
This is immediate upon changing the implied constant of \eqref{QDwithE} or \eqref{weakQDwithE} appropriately to handle $0 < \rho \leq 1$ for which $B(\xx,\rho)\nsubset U$.
\item[(v)]
If the limit \eqref{lebesguederivative} exists and is positive, then $\mu(B(\xx,\rho)) \asymp_\times \nu\big(B(\xx,\rho)\big)$ for all $0 < \rho \leq 1$. The claim follows immediately.
\qedhere
\end{itemize}
\end{proof}

We now prepare for the proof of Theorem \ref{theoremQDembedding}. The key idea, already implicitly contained in the proofs of \cite[Theorem 7.6]{KLW} and \cite[Theorem 4.6]{FKMS}, is to cover the neighborhood of the zero set of a smooth function by neighborhoods of hyperplanes. We bring this idea to the foreground by stating the following lemma, in which we use the notation
\begin{align*}
\|f\|_B &\df \sup_{\xx\in B} |f(\xx)|,&
\|f\|_{\CC^\epsilon,B} &\df \sup_{\xx,\yy\in B} \frac{|f(\yy) - f(\xx)|}{\|\yy - \xx\|^\epsilon},&
\Delta &\df B(\0,1) \subset \R^d,\\
\|f\| &\df \|f\|_\Delta,&
\|f\|_{\CC^\epsilon} &\df \|f\|_{\CC^\epsilon,\Delta}
\end{align*}

\begin{lemma}
\label{lemmaZPbeta}
Fix $\ell\in\N$ and $0 < \epsilon\leq 1$, and let $f:\Delta\to\R$ be a function of class $\CC^{\ell + \epsilon}$ such that
\begin{equation}
\label{fellbound}
\|f^{(\ell)}\|_{\CC^\epsilon} \leq \kappa_\ell \|f\|,
\end{equation}
where $\kappa_\ell > 0$ is a small constant depending on $\ell$ and $\epsilon$. Then for all $\beta > 0$ sufficiently small (depending on $\ell$ and $\epsilon$), the set
\[
\ZZ(f,\beta) \df \{\xx\in\Delta : |f(\xx)| \leq \beta\|f\|\}
\]
can be covered by collections $\CC_1,\ldots,\CC_\ell$, where for each $k = 1,\ldots,\ell$, the collection $\CC_k$ takes the form
\begin{equation}
\label{Ck}
\CC_k \df \{\thickvar{\LL_j}{\beta_k^{1 + \epsilon/2}}\cap B(\pp_j,\beta_k) : j\in J_k\},
\end{equation}
where $\beta_k \df \beta^{1/2^{2k - 1}}$, $(\pp_j)_{j\in J_k}$ is a $\beta_k$-separated sequence in $\Delta$, and $(\LL_j)_{j\in J_k}$ is a sequence of affine hyperplanes.
\end{lemma}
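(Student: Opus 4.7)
The plan is to induct on $\ell$, with Taylor expansion as the primary tool at each level. For the base case $\ell = 1$, I would select a maximal $\beta_1$-separated subset $(\pp_j)_{j\in J_1}$ of $\Delta$, and for each $\pp_j$ consider the affine approximation $L_j(\xx) \df f(\pp_j) + f'(\pp_j) \cdot (\xx - \pp_j)$. The hypothesis $\|f'\|_{\CC^\epsilon} \leq \kappa_1 \|f\|$ gives the Taylor bound $|f(\xx) - L_j(\xx)| \leq \kappa_1 \|f\| \|\xx - \pp_j\|^{1+\epsilon}$. I then split according to the size of $\|f'(\pp_j)\|$: if $\|f'(\pp_j)\| \geq c \|f\|$ for a fixed constant $c$ independent of $\beta$, then the hyperplane $\LL_j \df \{\yy : L_j(\yy) = 0\}$ is well-defined, and for $\xx \in \ZZ(f,\beta) \cap B(\pp_j,\beta_1)$ the estimate $|L_j(\xx)| \leq (\beta + \kappa_1 \beta_1^{1+\epsilon})\|f\|$ yields $\dist(\xx, \LL_j) \leq \beta_1^{1+\epsilon/2}$ for $\beta$ small. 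Otherwise, by H\"older control, $\|f'\|$ is bounded above by roughly $c\|f\| + \kappa_1 \|f\| \cdot 2^\epsilon$ on all of $\Delta$, so that (for $c$ and $\kappa_1$ small enough) $f$ varies by less than $\|f\|$ across $\Delta$; this would force $\ZZ(f,\beta) \cap B(\pp_j,\beta_1) = \emptyset$, since otherwise $|f(\pp_j)|$ would be small enough that the total oscillation bound would contradict $\|f\| = \sup_\Delta |f|$.

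For the inductive step $\ell \to \ell + 1$, I would first cover $\Delta$ by balls of radius $\beta_{\ell+1}$ centered on a maximal $\beta_{\ell+1}$-separated set, and at each center $\pp_j$ expand $f$ by Taylor's theorem to order $\ell + 1$, with remainder controlled by $\kappa_{\ell+1}\|f\|\|\xx-\pp_j\|^{\ell+1+\epsilon}$. As in the base case, a subcase based on whether the linear term $f'(\pp_j)$ dominates the higher-order Taylor contributions on $B(\pp_j, \beta_{\ell+1})$ will allow construction of a hyperplane for $\CC_{\ell+1}$, with the higher-order terms absorbed into the tube thickness $\beta_{\ell+1}^{1+\epsilon/2}$. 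For the complementary balls I would restrict and rescale: set $g(\yy) \df f(\pp_j + \beta_{\ell+1}\yy)$ on $\Delta$, verify that $g$ satisfies the hypothesis of the lemma at level $\ell$ with an adequate constant $\kappa_\ell$, and apply the inductive hypothesis to produce covers $\CC_1', \ldots, \CC_\ell'$ of $\ZZ(g, \beta')$ for an appropriately chosen $\beta'$. Transporting these covers back through the affine map $\yy \mapsto \pp_j + \beta_{\ell+1}\yy$ would then yield sets of the correct form to populate $\CC_1, \ldots, \CC_\ell$ of the original problem.

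The main obstacle I expect is the scale-matching in the inductive step. The prescribed scales $\beta_k = \beta^{1/2^{2k-1}}$ satisfy $\beta_k = \beta_{k+1}^4$, which is designed precisely so that the scales arising in the rescaled sub-problem for $g$, after multiplication by the rescaling factor $\beta_{\ell+1}$, align with the required scales $\beta_1, \ldots, \beta_\ell$ of the original problem. Verifying this alignment while simultaneously controlling $\|g^{(\ell)}\|_{\CC^\epsilon}$ in terms of $\|g\|$ for the rescaled function---which requires converting the hypothesis $\|f^{(\ell+1)}\|_{\CC^\epsilon} \leq \kappa_{\ell+1}\|f\|$ into a bound on $\|f^{(\ell)}\|_{\CC^\epsilon, B(\pp_j,\beta_{\ell+1})}$, and then relating $\|g\|$ back to $\|f\|$ on the bad balls---is where the delicacy lies. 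The threshold separating the good and bad subcases must be tuned against the Taylor remainder so that the tube thickness $\beta_k^{1+\epsilon/2}$ comes out correctly, and $\kappa_{\ell+1}$ must be chosen small enough (depending on $\kappa_\ell$, $d$, and $\epsilon$) for all constants to track through the induction.
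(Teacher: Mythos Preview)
Your approach differs substantially from the paper's, and the scale-matching you flag as the main obstacle is in fact a genuine problem that your scheme does not resolve.

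The paper inducts on $\ell$ not by rescaling $f$ to smaller balls, but by passing to a partial derivative on the \emph{same} domain $\Delta$. The key observation is that if $\ZZ(f,\beta)\neq\emptyset$ then $\inf_\Delta |f| \leq \tfrac12\|f\|$, so by the mean value inequality some $\partial_i f$ satisfies $\|\partial_i f\|\gtrsim_\times\|f\|$; this immediately transfers the hypothesis \eqref{fellbound} to $\partial_i f$ at level $\ell-1$. Applying the inductive hypothesis to $\partial_i f$ with parameter $\beta^{1/4}$ produces covers of $\ZZ(\partial_i f,\beta^{1/4})$ whose scales are exactly $\beta_2,\ldots,\beta_\ell$, since $(\beta^{1/4})^{1/2^{2k-1}}=\beta^{1/2^{2k+1}}=\beta_{k+1}$. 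The remaining set $\ZZ(f,\beta)\setminus\ZZ(\partial_i f,\beta^{\gamma/4})$ (where $|\partial_i f|$ is large) is then covered by $\CC_1$ via linear Taylor approximation around a $\beta_1$-net, exactly as in your base case. Thus the paper constructs the \emph{finest} collection $\CC_1$ at the current level and inherits the coarser ones from induction---the reverse of your plan.

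Your rescaling approach breaks at the tube thickness. If you rescale a bad ball of radius $\beta_{\ell+1}$ to $\Delta$ and apply induction with some parameter $\beta'$, the resulting balls have radii $(\beta')_k := (\beta')^{1/2^{2k-1}}$ and tube thicknesses $(\beta')_k^{1+\epsilon/2}$; transporting back multiplies both by $\beta_{\ell+1}$. If you match the ball radius, $\beta_{\ell+1}(\beta')_k = \beta_m$, then the transported tube thickness is $\beta_{\ell+1}(\beta')_k^{1+\epsilon/2}=\beta_m\cdot(\beta')_k^{\epsilon/2}$, whereas you need at most $\beta_m^{1+\epsilon/2}=\beta_m\cdot\beta_m^{\epsilon/2}$. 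Since $(\beta')_k=\beta_m/\beta_{\ell+1}>\beta_m$, the tube is strictly too thick. Moreover, no single $\beta'$ makes $\beta_{\ell+1}(\beta')_k=\beta_m$ hold for more than one value of $k$. The relation $\beta_k=\beta_{k+1}^4$ is tailored to the paper's derivative-based induction (take $\beta'=\beta^{1/4}$ on the same domain), not to a spatial rescaling. The separate difficulty you mention---controlling $\|g^{(\ell)}\|_{\CC^\epsilon}/\|g\|$ on bad balls where both $|f|$ and $|f'|$ may be small---is also sidestepped entirely by the paper's approach, since $\|\partial_i f\|\gtrsim_\times\|f\|$ is available globally.
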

\begin{proof}
We proceed by induction on $\ell$. If $\ell = 0$, then we let $\kappa_0 = 1/2$, which implies that $\ZZ(f,\beta) = \emptyset$ for all $\beta < 1/2$ and so the lemma is trivial. So suppose that $\ell \geq 1$ and $\ZZ(f,\beta)\neq\emptyset$. Then 
\[
\inf_\Delta |f| \leq \beta \|f\| \leq (1/2) \|f\|,
\] so by the mean value inequality, there exists $i = 1,\ldots,d$ such that $\|\del_i f\| \gtrsim_\times \|f\|$. Let $C_1 > 0$ denote the implied constant and let $\kappa_\ell \leq \kappa_{\ell - 1}/C_1$. Then
\begin{align*}
\|\del_i f^{(\ell - 1)}\|_{\CC^\epsilon}
&\leq \|f^{(\ell)}\|_{\CC^\epsilon}
\leq \kappa_\ell \|f\| \leq C_1\kappa_\ell \|\del_i f\| \leq \kappa_{\ell - 1} \|\del_i f\|,
\end{align*}
so by the induction hypothesis, $\ZZ(\del_i f,\beta^{1/4})$ can be covered by collections $\CC_2,\ldots,\CC_\ell$ of the form \eqref{Ck}. If $\ell = 1$, then by the base case of the induction we have $\ZZ(\del_i f, \beta^{\epsilon/4}) = \emptyset$ assuming $\beta$ is sufficiently small. Let
\[
\gamma = \begin{cases}
\epsilon & \ell = 1\\
1 & \ell \geq 2
\end{cases},
\]
so that either way, $\ZZ(\del_i f,\beta^{\gamma/4})$ can be covered by collections $\CC_2,\ldots,\CC_\ell$ of the form \eqref{Ck}. So to complete the proof, we need to cover $\ZZ(f,\beta)\butnot \ZZ(\del_i f,\beta^{\gamma/4})$ by a collection $\CC_1$ of the form $\eqref{Ck}_{k = 1}$. Let $(\pp_j)_{j\in J_1}$ be a maximal $\beta_1 = \beta^{1/2}$-separated sequence in $\Delta\butnot \ZZ(\del_i f,\beta^{\gamma/4})$, and let $J = J_1$. Fix $j\in J$, so that 
\[
|\del_i f(\pp_j)| > \beta^{\gamma/4} \|\del_i f\| \asymp_\times \beta^{\gamma/4} \|f\|.
\] 
Let $B_j \df B(\pp_j,\beta_1)$.
\begin{claim}
For all $\yy\in B_j\cap \Delta$,
\[
|f(\yy) - f(\pp_j) - f'(\pp_j)[\yy - \pp_j]| \lesssim_\times \beta_1^{1 + \gamma} \|f\|.
\]
\end{claim}
\begin{subproof}
Elementary calculus gives
\begin{align*}
|f(\yy) - f(\pp_j) - f'(\pp_j)[\yy - \pp_j]|
&\leq \|\yy - \pp_j\| \sup_{\zz\in B_j\cap \Delta} \|f'(\zz) - f'(\pp_j)\|.
\end{align*}
Since $\|\yy - \pp_j\| \leq \beta_1$, to complete the proof we need to show that
\begin{equation}
\label{ETSZP}
\|f'(\zz) - f'(\pp_j)\| \lesssim_\times \beta_1^\gamma \|f\| \all\zz\in B_j\cap \Delta.
\end{equation}
If $\ell = 1$, then \eqref{ETSZP} follows directly from \eqref{fellbound}. So suppose that $\ell \geq 2$, and write $f = P + R$, where $P$ is the Taylor polynomial of $f$ at $\0$ of order $\ell$. By \eqref{fellbound} and the mean value inequality we have
\begin{equation}
\label{Rbounds}
\|R\| \lesssim_\times \cdots \lesssim_\times \|R^{(\ell)}\| \leq \|R^{(\ell)}\|_{\CC^\epsilon} \lesssim_\times \kappa_\ell \|f\|,
\end{equation}
so by making $\kappa_\ell$ sufficiently small we can guarantee that $\|R\| \leq (1/2)\|f\|$ and thus $\|f\| \asymp_\times \|P\|$. But $\|P\|$ is asymptotic to the maximum of the coefficients of $P$, which implies that $\|P^{(2)}\| \lesssim_\times \|P\|$. On the other hand, $\|R^{(2)}\| \lesssim_\times \|f\|$ by \eqref{Rbounds}, so overall we have $\|f^{(2)}\| \lesssim_\times \|f\|$. Applying the mean value inequality yields \eqref{ETSZP}.
\end{subproof}
Thus if we let $\LL_j = \{\yy : f(\pp_j) + f'(\pp_j)[\yy - \pp_j] = 0\}$, then for all $\yy\in B_j\cap \Delta$ we have
\[
\dist(\yy,\LL_j) = \frac{|f(\pp_j) - f'(\pp_j)[\yy - \pp_j]|}{\|f'(\pp_j)\|} \lesssim_\times \frac{|f(\yy)| + \beta_1^{1 + \gamma} \|f\|}{\beta_1^{\gamma/2}\|f\|}\cdot
\]
So for $\yy\in \ZZ(f,\beta)\cap B_j\cap \Delta$, we have $|f(\yy)|\leq \beta\|f\| \leq \beta_1^{1 + \gamma} \|f\|$ and thus $\dist(\yy,\LL_j) \lesssim_\times \beta_1^{1 + \gamma/2}$. So if $\beta$ is small enough, then
\[
B_j\cap \Delta\cap \ZZ(f,\beta) \subset \thickvar{\LL_j}{\beta_1^{1 + \gamma/3}}.
\]
Taking the union over $j\in J = J_1$ gives
\[
\Delta\cap \ZZ(f,\beta)\butnot \ZZ(\del_i f,\beta^{\gamma/4}) \subset \bigcup_{j\in J_1} \thickvar{\LL_j}{\beta_1^{1 + \gamma/3}}\cap B_j = \bigcup(\CC_1).
\qedhere\]
\end{proof}

We are almost ready to prove Theorem \ref{theoremQDembedding}. First, we recall the definition of a nondegenerate embedding:

\begin{definition}
\label{definitionnondegenerate}
Let $U\subset\R^d$ be an open set, and let $\psi:U\to\R^D$ be a map of class $\CC^1$. Suppose that $\psi$ is a \emph{smooth embedding}, i.e. that $\psi$ is a homeomorphism onto its image and that for each $\xx\in U$, the linear transformation $\psi'(\xx)$ is injective. Given $\xx\in U$ and $\ell\in\N$, we say that $\psi$ is \emph{$\ell$-nondegenerate at $\xx$} if $\psi$ is of class $\CC^\ell$ in a neighborhood of $\xx$ and
\[
\R^D = \psi'(\xx)[\R^d] + \psi''(\xx)[\R^d\otimes\R^d] + \cdots + \psi^{(\ell)}(\xx)[(\R^d)^{\otimes \ell}].
\]
If $\psi$ is $\ell$-nondegenerate at every point of $U$ (resp. at almost every point of $U$), then we say that $\psi$ is $\ell$-nondegenerate (resp. $\ell$-weakly nondegenerate), or just nondegenerate (resp. weakly nondegenerate). The manifold $\psi(U)$ will also be called $\ell$-nondegenerate (resp. $\ell$-weakly nondegenerate).
\end{definition}

It is not hard to see that if $U$ is connected and $\psi$ is a real-analytic smooth embedding, then $\psi$ is weakly nondegenerate if and only if $\psi(U)$ is not contained in any affine hyperplane. Even in the setting of smooth maps, examples of connected smooth embeddings which are strongly degenerate (i.e. not weakly nondegenerate) but not contained in any affine hyperplane are somewhat pathological \cite{Wolsson}.

Theorem \ref{theoremQDembedding} now follows from Lemmas \ref{lemmaunifQD} and \ref{lemmaquasifederer} together with the following:

\begin{proposition}
\label{propositionQDembedding}
Let $\mu_1$ be a measure on an open set $U\subset\R^d$, let $\borel_1\subset U$, and suppose that $\mu_1$ is uniformly quasi-decaying relative to $\borel_1$. Fix $\ell\in\N$ and $\epsilon > 0$, and let $\psi:U\to\R^D$ be a smooth embedding which is $\ell$-nondegenerate at a point $\xx_1\in U$, and of class $\CC^{\ell + \epsilon}$ in a neighborhood of $\xx_1$. Then if $\mu_2 = \psi(\mu_1)$ is quasi-Federer at $\xx_2 = \psi(\xx_1)$, then $\mu_2$ is weakly quasi-decaying at $\xx_2$ relative to $\borel_2 = \psi(\borel_1)$.
\end{proposition}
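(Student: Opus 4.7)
The strategy is to pull everything back through $\psi$. Working in a neighborhood of $x_1$ on which $\psi$ is a $\CC^{\ell+\epsilon}$ diffeomorphism onto its image, each hyperplane $\LL\subset\R^D$ corresponds to a smooth function $f_\LL:U\to\R$ defined by $f_\LL(x) = \langle \psi(x) - p_\LL,\, n_\LL\rangle$, where $n_\LL$ is a unit normal to $\LL$ and $p_\LL\in\LL$; then $|f_\LL(x)| = d_\LL(\psi(x))$. Writing $r = \|d_\LL\|_{\mu_2, B_2}$ and using the local bi-Lipschitz property of $\psi$, the set $\thickvar{\LL}{\beta r}\cap B_2\cap \borel_2$ pulls back to $\{x\in B_1 : |f_\LL(x)|\leq \beta r\}\cap \borel_1$ inside some ball $B_1 = B(x_1, \rho_1)\subset U$ with $\rho_1\asymp \rho_2$.

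I would next rescale $f_\LL$ to $\Delta = B(\0,1)$ and apply Lemma \ref{lemmaZPbeta} to cover the rescaled sublevel set by collections $\CC_1,\ldots,\CC_\ell$ of hyperplane neighborhoods of the form \eqref{Ck}. Uniform quasi-decay of $\mu_1$ relative to $\borel_1$ then bounds the $\mu_1$-measure of each element of $\CC_k$ by $\lesssim\beta_k^{\epsilon\alpha/2}$ times the measure of the underlying ball, and summing over $j\in J_k$ using the bounded multiplicity coming from the $\beta_k$-separatedness of the base points produces a total bound $\lesssim \beta^{\alpha'}\mu_1(B(x_1,2\rho_1))$ for some $\alpha'>0$. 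Converting back to $\mu_2$ via the inclusion $\psi(B(x_1,2\rho_1))\subseteq B(x_2, K\rho_2)$ and using the quasi-Federer property of $\mu_2$ at $x_2$ to replace this enlarged ball by $B_2$ (at the cost of an additional factor $\rho_2^{-\epsilon'}$ that is absorbed into the exponent) yields the target estimate $C\beta^{\alpha''}\mu_2(B_2)$. Along the way one must verify that the thickness parameter $\beta_k^{\epsilon/2}$ at ball radius $\rho_1\beta_k$ satisfies the constraint $\beta_k^{\epsilon/2}\leq (\rho_1\beta_k)^{\tilde\gamma}$ required to invoke uniform quasi-decay of $\mu_1$; this is arranged by choosing the auxiliary parameter $\tilde\gamma$ for $\mu_1$ sufficiently large compared with the given $\gamma$ on the $\mu_2$ side.

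The main obstacle is verifying the hypothesis of Lemma \ref{lemmaZPbeta} after rescaling, namely $\|\tilde f_\LL^{(\ell)}\|_{\CC^\epsilon}\leq \kappa_\ell\|\tilde f_\LL\|$, which reduces to a uniform lower bound $\|f_\LL\|_{B_1}\gtrsim \rho_1^{\ell+\epsilon}$ valid for every hyperplane $\LL\subset\R^D$. This is precisely where the $\ell$-nondegeneracy of $\psi$ at $x_1$ is used: the Taylor coefficients of $f_\LL$ at $x_1$ depend linearly on $n_\LL$ via $n_\LL\mapsto \langle \psi^{(k)}(x_1), n_\LL\rangle$ for $1\leq k\leq \ell$, and $\ell$-nondegeneracy says exactly that no unit normal $n_\LL$ annihilates all of them. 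A compactness argument over the sphere of normals yields a uniform constant $c_*>0$ with
\[
\sup_{\|v\|\leq 1}\Big|\sum_{k=1}^\ell \frac{1}{k!}\big\langle \psi^{(k)}(x_1)v^{\otimes k},\, n_\LL\big\rangle\Big|\geq c_*
\]
for every unit $n_\LL$, and a Taylor-remainder argument (splitting into the cases $|f_\LL(x_1)|\gtrsim \rho_1^\ell$ and $|f_\LL(x_1)|\ll \rho_1^\ell$) upgrades this to $\|f_\LL\|_{B_1}\gtrsim \rho_1^\ell$ for $\rho_1$ sufficiently small, which comfortably exceeds the required bound. With this uniform estimate in hand, the remaining work is a careful but routine assembly of the pullback, Lemma \ref{lemmaZPbeta}, uniform quasi-decay of $\mu_1$, and the quasi-Federer property of $\mu_2$.
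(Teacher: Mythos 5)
Your proposal matches the paper's proof essentially step for step: pull the hyperplane back to the scalar function $\pi\circ\psi$, rescale to the unit ball, verify the hypothesis \eqref{fellbound} of Lemma \ref{lemmaZPbeta} by comparing with the order-$\ell$ Taylor polynomial and using a compactness argument over unit normals (which is exactly where $\ell$-nondegeneracy enters, giving $\|f\|\gtrsim_\times\rho^\ell$), cover the sublevel set by the collections $\CC_k$, apply uniform quasi-decay of $\mu_1$ to each piece after checking the thickness-versus-radius constraint, sum using the bounded multiplicity of the $\beta_k$-separated centers, and convert back to $\mu_2$ via the quasi-Federer property at $\xx_2$. This is the same argument as in the paper, with no substantive differences.
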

\begin{proof}
Fix $\gamma > 0$, $0 < \rho\leq 1$, $0 < \beta \leq \rho^\gamma$, $\LL\in\Hyp$, and $B_2 \df B(\xx_2,\rho)$. Since $\psi$ is a smooth embedding, for some constant $C_1 > 0$ we have $\psi(B_1)\supset \psi(U)\cap B_2$, where $B_1 \df B(\xx_1,C_1\rho)$. Let $\pi:\R^d\to\R$ be an affine map such that for all $\yy\in\R^d$, $\dist(\yy,\LL) = |\pi(\yy)|$. Then
\begin{align*}
\mu_2\big(\thickvar{\LL}{\beta\|\dist_\LL\|_{\mu_2,B_2}}\cap B_2\cap \borel_2\big)
&\leq \mu_1\big(\big\{\yy\in B_1\cap \borel_1 : |\pi\circ\psi(\yy)| \leq \beta \|\pi\circ\psi\|_{B_1}\big\}\big).
\end{align*}
Let $T(\zz) = \xx_1 + C_1\rho\zz$, so that $T(\Delta) = B_1$. Let $f = \pi\circ\psi\circ T$, so that
\[
\big\{\yy\in B_1\cap \borel_1 : |\pi\circ\psi(\yy)| \leq \beta \|\pi\circ\psi\|_{B_1}\big\}
= T\big(\big\{\zz\in\Delta : |f(\zz)| \leq \beta \|f\|\big\}\big)\cap \borel_1.
\]
Let $\bfP:\R^d\to\R^D$ be the Taylor approximation of $\psi$ at $\xx_1$ to order $\ell$, and let $P = \pi\circ\bfP\circ T$. Since $\psi$ is of class $\CC^{\ell + \epsilon}$ in a neighborhood of $\xx_1$, we have
\[
\|f - P\| \lesssim_\times \rho^{\ell + \epsilon}.
\]
On the other hand, since by hypothesis $\bfP(\R^d)$ is not contained in any affine hyperplane, a compactness argument shows that $\|\pi\circ\bfP\|\asymp_\times 1$, and thus
\[
\|P\| \gtrsim_\times \rho^\ell \|\pi\circ\bfP\| \asymp_\times \rho^\ell.
\]
Thus if $\rho$ is sufficiently small, then $\|f\| \asymp_\times \|P\|$. We also have
\[
\|f^{(\ell)}\|_{\CC^\epsilon}
= \rho^{\ell + \epsilon} \|(\pi\circ\psi)^{(\ell)}\|_{\CC^\epsilon,B_1} \leq \rho^{\ell + \epsilon} \|\psi^{(\ell)}\|_{\CC^\epsilon,B_1} \lesssim_\times \rho^{\ell + \epsilon},
\]
so if $\rho$ is sufficiently small then \eqref{fellbound} holds. Let the collections $\CC_1,\ldots,\CC_\ell$ be given by Lemma \ref{lemmaZPbeta}. For each $k = 1,\ldots,\ell$ let $\beta_k$, $(\pp_j)_{j\in J_k}$, and $(\LL_j)_{j\in J_k}$ be as in \eqref{Ck}. Then
\begin{equation}
\label{whereweleftoff}
\begin{split}
&\mu_2\big(\thickvar{\LL}{\beta\|\dist_\LL\|_{\mu_2,B_2}}\cap B_2\cap \borel_2\big)\\
&\leq \mu_1\left(\bigcup_{k = 1}^\ell \bigcup_{j\in J_k} T\big(\thickvar{\LL_j}{\beta_k^{1 + \epsilon/2}}\cap B(\pp_j,\beta_k)\big)\cap \borel_1\right)\\
&\leq \sum_{k = 1}^\ell \sum_{j\in J_k} \mu_1\big(\thickvar{T(\LL_j)}{C_1 \beta_k^{1 + \epsilon/2}\rho}\cap B(T(\pp_j),C_1\beta_k \rho)\cap \borel_1\big)\\
&= \sum_{k = 1}^\ell \sum_{j\in J_k} \mu_1\big(\thickvar{T(\LL_j)}{\beta_k^{\epsilon/2}\rho_k}\cap B(T(\pp_j),\rho_k/2)\cap \borel_1\big),
\end{split}
\end{equation}
where $\rho_k = 2C_1 \beta_k \rho$.

Fix $k = 1,\ldots,\ell$ and $j\in J_k$. We claim that for some $\alpha > 0$ depending only on $\gamma$,
\begin{equation}
\label{Txy}
\mu_1\big(\thickvar{T(\LL_j)}{\beta_k^{\epsilon/2}\rho_k}\cap B(T(\pp_j),\rho_k/2)\cap \borel_1\big) \lesssim_\times \beta^\alpha \mu_1\big(B(T(\pp_j),2\rho_k)\big).
\end{equation}
To avoid trivialities, assume that the set in the left hand side is nonempty, and let $\yy$ be a member of that set. Then $B(T(\pp_j),\rho_k/2)\subset B(\yy,\rho_k) \subset B(T(\pp_j),2\rho_k)$, so it is enough to show that
\[
\mu_1\big(\thickvar{T(\LL_j)}{\beta_k^{\epsilon/2}\rho_k}\cap B(\yy,\rho_k)\cap \borel_1\big) \lesssim_\times \beta^\alpha \mu_1\big(B(\yy,\rho_k)\big).
\]
Write $\delta_k \df 1/2^{2k - 1}$, so that $\beta_k \df \beta^{\delta_k}$. Then
\begin{align*}
\beta_k &\leq \rho^{\gamma\delta_k},&
\rho_k &\lesssim_\times \beta_k^{1 + 1/(\gamma\delta_k)},&
\beta_k^{\epsilon/2} &\lesssim_\times \rho_k^{\frac{\epsilon/2}{1 + 1/(\gamma\delta_k)}}.
\end{align*}
Let $\alpha_k = \alpha\big(\frac{\epsilon/2}{1 + 1/(\gamma\delta_k)},\mu_1\big)$ (cf. Lemma \ref{lemmaQD}). Then since $\yy\in \borel_1$, we have
\[
\mu_1\big(\thickvar{T(\LL_j)}{\beta_k^{\epsilon/2}\rho_k}\cap B(\yy,\rho_k)\cap \borel_1\big) \lesssim_\times \beta_k^{\alpha_k \epsilon/2} \mu_1\big(B(\yy,\rho_k)\big),
\]
and letting $\alpha \df \min_k \delta_k \alpha_k \epsilon/2 > 0$ completes the proof of \eqref{Txy}.

We finish the proof with the following calculation:
\begin{align*}
&\mu_2\big(\thickvar{\LL}{\beta\|\dist_\LL\|_{\mu_2,B_2}}\cap B_2\cap \borel_2\big)\\
&\lesssim_\times \sum_{k = 1}^\ell \sum_{j\in J_k} \beta^\alpha \mu_1\big(B(T(\pp_j),2\rho_k)\big) \by{\eqref{whereweleftoff} and \eqref{Txy}}\\
&\asymp_\times \beta^\alpha \sum_{k = 1}^\ell \mu_1\big(B(\xx_1,2 C_1 \rho)\big) \note{bounded multiplicity}
\end{align*}
\begin{align*}
&\lesssim_\times \beta^{\alpha} \mu_2\big(B(\xx_2,2C_2 \rho)\big) \note{for some $C_2 > 0$}\\
&\lesssim_\times \beta^{\alpha/2} \mu_2\big(B(\xx_2,\rho)\big). \since{$\mu_2$ is quasi-Federer at $\xx_2$}
&\qedhere\end{align*}
\end{proof}

\draftnewpage
\section{More refined Diophantine properties of quasi-decaying measures}
\label{sectionKLW}

In this section, we fix $\bothdim\in\N$, and let $\MM = \MM_\bothdim$ denote the set of $\pdim\times \qdim$ matrices as in \6\ref{subsectionadditional}. We will usually identify $\MM$ with its image under the Pl\"ucker embedding $\psi:\MM\to \EE$ defined by \eqref{plucker}; however, we will sometimes distinguish between $\bfA\in\MM$ and $\psi(\bfA)\in\EE$ for clarity. Our main goal in this section is to prove Theorem \ref{theoremexponents}, using the techniques of \cite{KleinbockMargulis2, KLW, BKM, KMW}. We begin by introducing a uniform framework with which to talk about exponents of irrationality and their multiplicative versions. Our tool for doing this is the Dani--Kleinbock--Margulis correspondence principle between Diophantine approximation and the dynamics of homogeneous flows \cite{Dani3, KleinbockMargulis}.

\subsection{The correspondence principle}
\label{subsectioncorrespondence}
To start with, we introduce the notations
\begin{align*}
\Lambda_0 &\df \Z^{\pdim + \qdim}\\
u_\bfA &\df \left[\begin{array}{cc}
I_\pdim & \bfA \\
& I_\qdim
\end{array}\right] & (\bfA\in\MM)\\
g_\tt &\df \left[\begin{array}{ccc}
e^{t_1} &&\\
& \ddots &\\
&& e^{t_{\pdim + \qdim}}
\end{array}\right] & (\tt\in\mfa),
\end{align*}
where $\mfa \df \{\tt\in\R^{\pdim + \qdim} : \sum t_i = 0\}$. Next let
\begin{align*}
\mfa_+ &\df \left\{\tt\in \mfa :
\begin{array}{c}
\text{$t_i\leq 0$ for $i \leq \pdim$}\\
\text{$t_i\geq 0$ for $i > \pdim$}
\end{array}\right\}\\
\mfa_+^* &\df \left\{\tt\in \mfa_+ :
\begin{array}{c}
t_1 = \cdots = t_\pdim\\
t_{\pdim + 1} = \cdots = t_{\pdim + \qdim}
\end{array}
\right\}.
\end{align*}
Finally, given $\SS\subset\mfa_+$ and a function $\norm:\SS\to\Rplus$, we let
\[
\omega(\bfA;\SS,\norm) \df \limsup_{\SS\ni\tt\to\infty} \frac{1}{\norm(\tt)}\Delta\big(g_\tt u_\bfA \Lambda_0\big),
\]
where
\[
\Delta(\Lambda) \df -\log\min\big\{\|\rr\| : \rr\in\Lambda\butnot\{\0\}\big\}.
\]
We can now state the following special case of the Dani--Kleinbock--Margulis correspondence principle:

\begin{proposition}[Corollary of {\cite[Theorem 8.5]{KleinbockMargulis}}]
\label{propositiondynamicalinterpretation}
For all $\bfA\in\MM$,
\begin{align*}
\omega(\bfA) &= \xi\big(\omega(\bfA;\mfa_+^*,\norm_0)\big),
\end{align*}
where
\begin{align*}
\norm_0\left(-\frac t\qdim,\cdots,-\frac t\qdim,\frac t\pdim,\cdots,\frac t\pdim\right) &= t,&
\xi(c) &= \frac \qdim\pdim \frac{1 + \pdim c}{1 - \qdim c}\cdot
\end{align*}
\end{proposition}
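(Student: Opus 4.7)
The proof reduces to applying the Dani--Kleinbock--Margulis correspondence principle in the precise form of \cite[Theorem 8.5]{KleinbockMargulis}, so my plan is to unpack what that theorem says along the one-parameter ray $\mfa_+^*$ and then verify the scaling formula for $\xi$. The two quantities are both limsups but indexed differently: $\omega(\bfA)$ is a limsup over integer vectors $(\pp,\qq)$, while $\omega(\bfA;\mfa_+^*,\norm_0)$ is a limsup over the real parameter $t = \norm_0(\tt)$. The Dani--Kleinbock--Margulis correspondence relates the two by a change of variables that depends only on the ray $\mfa_+^*$ (and the choice of norm), not on $\bfA$.

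First I will unpack the action of $g_{\tt(t)} u_\bfA$ on an integer vector $(\pp,\qq)\in\Lambda_0\butnot\{\0\}$, where $\tt(t)\in\mfa_+^*$ is the vector on the central ray with $\norm_0(\tt(t))=t$. Using the sup norm, $\|g_{\tt(t)} u_\bfA(\pp,\qq)^T\|$ is the maximum of a contracted version of $\|\bfA\qq-\pp\|$ (with contraction rate determined by the first $\pdim$ coordinates of $\tt(t)$) and an expanded version of $\|\qq\|$ (with expansion rate determined by the last $\qdim$). For each fixed $(\pp,\qq)$ with $\qq\neq\0$, I will determine the value $t_*$ of the parameter at which these two contributions are balanced. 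At $t_*$, the common value $e^{-\Delta(g_{\tt(t_*)} u_\bfA\Lambda_0)}$ is a weighted geometric mean of $\|\bfA\qq-\pp\|$ and $\|\qq\|$, while $t_*$ itself is an affine function of $\log\|\bfA\qq-\pp\|$ and $\log\|\qq\|$.

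Dividing, the ratio $\Delta(g_{\tt(t_*)} u_\bfA\Lambda_0)/t_*$ becomes a M\"obius transformation of the ratio $-\log\|\bfA\qq-\pp\|/\log\|\qq\|$. A standard extremal argument (this is the actual content of \cite[Theorem 8.5]{KleinbockMargulis}) shows that one can restrict the limsup defining $\omega(\bfA;\mfa_+^*,\norm_0)$ to a sequence of $t$'s arising as balance points $t_*$ for best approximants, with no loss. Consequently $\omega(\bfA;\mfa_+^*,\norm_0)$ is the image of $\omega(\bfA)$ under the M\"obius transformation, and inverting the transformation recovers $\omega(\bfA)=\xi(\omega(\bfA;\mfa_+^*,\norm_0))$.

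The main obstacle is purely bookkeeping of the normalizations: identifying correctly how the rates $1/\pdim$ and $1/\qdim$ appear in the coordinates of $\tt(t)$ relative to the condition $\sum t_i = 0$, and then carrying these constants through the algebraic inversion to arrive at the specific formula $\xi(c)=(\qdim/\pdim)(1+\pdim c)/(1-\qdim c)$. There is nothing conceptually deep here once Theorem 8.5 of \cite{KleinbockMargulis} is invoked; the content of this corollary is just the specialization of that general principle to the classical exponent of irrationality, and the verification amounts to solving the linear system giving $t_*$ and substituting.
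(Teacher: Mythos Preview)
The paper does not include its own proof of this proposition; it is simply recorded as a corollary of \cite[Theorem 8.5]{KleinbockMargulis} and left at that. Your outline is correct and is precisely the computation one would carry out to extract the explicit M\"obius relation $\xi$ from that theorem by specializing to the one-parameter diagonal ray $\mfa_+^*$, so there is nothing to compare.
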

It is harder to state a multiplicative version of Proposition \ref{propositiondynamicalinterpretation}. In this context it is worth mentioning \cite[Theorem 9.2]{KleinbockMargulis}, which at first sight appears to be such a multiplicative analogue. However, the version of multiplicative approximation considered in \cite{KleinbockMargulis} differs in several senses from the version of multiplicative matrix approximation considered in this paper:
\begin{itemize}
\item the results there are for lattices rather than matrices, and the concepts become trivial when restricted to the ``usual example'' of lattices in the form $u_\bfA \Lambda_0$, as these lattices all contain vectors which lie in a coordinate subspace and are therefore $\psi$-MA for every $\psi$ in the sense of \cite[\69.1]{KleinbockMargulis}.
\item the ``height'' of a vector in a lattice is considered to be the maximum of its coordinates, whereas in our setup the height of the vector $(\pp,\qq)$ is considered to be the number $\prod_{j = 1}^\qdim |q_j|\vee 1$. This change (by itself) does not affect which matrices are considered to be VWMA, but it does affect the exponent of multiplicative irrationality for those matrices which are VWMA.
\end{itemize}
A multiplicative version of Proposition \ref{propositiondynamicalinterpretation} which is closer to our setup appeared in \cite[Proposition 3.1]{KMW}:
\begin{proposition}[Corollary of {\cite[Proposition 3.1]{KMW}}]
\label{propositiondynintmult}
A matrix $\bfA\in\MM$ is VWMA if and only if
\[
\omega(\bfA;\mfa_+,\norm) > 0,
\]
where $\norm:\mfa\to\Rplus$ is any norm.
\end{proposition}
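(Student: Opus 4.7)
The statement is essentially \cite[Proposition~3.1]{KMW} translated into the notation of this paper, so my plan is to run the standard Dani--Kleinbock--Margulis correspondence argument, verifying each direction by a coordinatewise analysis of the lattice $g_\tt u_\bfA \Lambda_0$.

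For the implication $\omega(\bfA;\mfa_+,\norm) > 0 \implies \bfA$ is VWMA, I would unpack the hypothesis: there are $\epsilon > 0$ and $\tt_k \in \mfa_+$ with $\norm(\tt_k)\to\infty$ and nonzero $(\pp_k,\qq_k)\in\Lambda_0$ such that $\|g_{\tt_k} u_\bfA(\pp_k,\qq_k)\|_\infty \le e^{-\epsilon\norm(\tt_k)}$. Equivalently, for each $k$ the coordinatewise bounds $e^{t_{k,i}}|r_{k,i}| \le e^{-\epsilon\norm(\tt_k)}$ and $e^{t_{k,\pdim+j}}|q_{k,j}| \le e^{-\epsilon\norm(\tt_k)}$ hold, where $r_{k,i} = (\bfA\qq_k - \pp_k)_i$. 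Multiplying all $\pdim+\qdim$ inequalities and using $\sum_i t_{k,i} = 0$ yields
\[
\prod_i |r_{k,i}| \cdot \prod_j |q_{k,j}| \le e^{-(\pdim+\qdim)\epsilon\norm(\tt_k)}.
\]
Because every norm on $\mfa_+$ is equivalent to $\max_i |t_{k,i}|$, the same coordinate bounds force $\log(\prod_j|q_{k,j}|\vee 1)$ to be bounded above by a fixed multiple of $\norm(\tt_k)$, so the ratio defining $\omega_\mult(\bfA)$ exceeds $1$ by a uniform positive amount.

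For the reverse, given a sequence of VWMA approximations $(\pp,\qq)$ with $\prod_i|r_i| \le (\prod_j|q_j|\vee 1)^{-(1+\delta)}$, I would construct $\tt$ that balances the coordinates of $g_\tt u_\bfA(\pp,\qq)$: set $t_i = -\log|r_i| - c$ for $i\le\pdim$ and $t_{\pdim+j} = -\log|q_j| - c$ for $j\le\qdim$, with $c$ chosen by $\sum_i t_i = 0$, giving $c = -(\pdim+\qdim)^{-1}\bigl(\log\prod_i|r_i| + \log\prod_j|q_j|\bigr)$. By construction every coordinate of $g_\tt u_\bfA(\pp,\qq)$ has modulus exactly $e^{-c}$, so $\Delta(g_\tt u_\bfA \Lambda_0) \ge c$; the VWMA inequality then forces $c \gtrsim \norm(\tt)$.

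The main obstacle is verifying that this $\tt$ actually lies in $\mfa_+$: the naive formula can violate the sign constraints when some $|r_i|$ is much smaller than $e^{-c}$ or some $|q_j|$ is much larger (or zero, in which case the formula breaks down entirely). The standard remedy, as in \cite{KMW}, is to truncate the offending $t$-coordinates to the boundary of $\mfa_+$---replacing $|r_i|$ by $\max(|r_i|,e^{-c})$ and $|q_j|$ by $\min(|q_j|,e^{-c})\vee 1$ in the defining formula---and then re-solve for $c$. These truncations only tighten the bound on $\|g_\tt u_\bfA(\pp,\qq)\|_\infty$ while modifying $c$ by a controlled additive factor, leaving the asymptotic $c \gtrsim \norm(\tt)$ intact.
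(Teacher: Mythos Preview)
The paper does not prove this proposition; it is quoted from \cite[Proposition~3.1]{KMW}, so there is no in-paper argument to compare against. Your forward direction (multiply the $\pdim+\qdim$ coordinate inequalities and use $\sum_i t_i = 0$) is the standard computation and is correct up to routine bookkeeping about zero coordinates.

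The reverse direction has a genuine gap. You are right that the balanced $\tt$ built from a given approximant $(\pp,\qq)$ need not lie in $\mfa_+$, but your truncation does not repair this. The claim that ``these truncations only tighten the bound on $\|g_\tt u_\bfA(\pp,\qq)\|_\infty$'' is false: clamping an offending coordinate of $\tt$ to zero leaves the corresponding entry of $g_\tt u_\bfA(\pp,\qq)$ equal to the raw $|r_i|$ or $|q_j|$, which need not be $\le e^{-c}$ at all, and re-solving for $c$ only shifts the problem. The paper itself flags exactly this obstruction in the footnote immediately following the proposition: for $\bfA=(1/2,\epsilon)^T$ and $\rr_1=((0,0),1)$ one has $u_\bfA\rr_1=(1/2,\epsilon,1)$, and no $\tt\in\mfa_+$ can shrink $g_\tt u_\bfA\rr_1$ below a fixed positive constant---one coordinate is pinned regardless of how you truncate. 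As that footnote states, the argument in \cite{KMW} ``circumvent[s]'' the difficulty ``by finding another approximant which can be shrunk to small size using only $\tt\in\mfa_+$''; this replacement of the approximant is the missing idea, and it is genuinely more than projecting $\tt$ onto the cone.
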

This theorem does not contain any information relating the exponent of multiplicative irrationality function $\omega_\mult$ with functions of the form $\bfA\mapsto\omega(\bfA;\SS,\norm)$. This appears to be difficult or impossible to do for technical reasons.\Footnote{The integer point $\rr_1 = ((0,0),1) \in \R^{2 + 1}$ should be counted as a good multiplicative approximation of the matrix $\bfA = (1/2,\epsilon)^T \in \MM_{2,1}$ (since $u_\bfA \rr_1 = ((1/2,\epsilon),1)$ has a small second coordinate), but the point $\rr_2 = ((1,0),0)$ should not (since $u_\bfA \rr_2 = ((1,0),0)$ is independent of $\bfA$). But any $\tt\in\mfa$ which shrinks $u_\bfA \rr_1$ to a small size also shrinks $u_\bfA \rr_2$ to a small size. In \cite{KMW} this problem was circumvented by finding another approximant which can be shrunk to small size using only $\tt\in\mfa_+$, but this approximant may not be of as good quality as $\rr_1$. In some sense the real problem might be that the function $\Delta$ appearing in the definitions of $\omega$ and $\omega_\times$ does not give enough information as to how far a lattice is into the cusp.\label{footnotetechnical}}

\subsection{Computing the exponent of irrationality of an affine subspace of $\EE$}
In view of Propositions \ref{propositiondynamicalinterpretation} and \ref{propositiondynintmult}, and after replacing $\SS$ by a discrete approximation, to prove Theorem \ref{theoremexponents} it suffices to demonstrate the following:

\begin{theorem}
\label{theoremexponents2}
Let $\mu$ be a measure on $\MM$ which is supported on an affine subspace $\AA\subset\EE$ and which is weakly quasi-decaying relative to $\borel \subset \MM\cap\AA$ when interpreted as a measure on $\AA$. Fix $\SS\subset\mfa$ and $\norm:\SS\to\Rplus$ such that for all $\tt\in\SS$, we have $\norm(\tt) \asymp_\times \|\tt\|$. Then for $\mu$-a.e. $\bfA\in\MM\cap\AA$,
\begin{equation}
\label{exponents2}
\omega(\bfA;\SS,\norm) = \inf\{\omega(\bfB;\SS,\norm) : \bfB\in\MM\cap \AA\}.
\end{equation}
\end{theorem}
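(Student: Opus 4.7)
My plan is to adapt the dynamical Borel--Cantelli argument of Kleinbock--Margulis and its affine-subspace generalizations in \cite{KleinbockMargulis, KLW, KMW, ABRdS3}, with the weak quasi-decay of $\mu$ on $\AA$ replacing the $(C,\alpha)$-good hypothesis used there. The inequality $\omega(\bfA;\SS,\norm) \geq c_0 \df \inf\{\omega(\bfB;\SS,\norm) : \bfB \in \MM\cap\AA\}$ is automatic for $\bfA \in \MM\cap\AA$, so the task is to show that for every $c > c_0$ one has $\mu(\{\bfA \in \MM\cap\AA : \omega(\bfA;\SS,\norm) > c\}) = 0$. By Lemma~\ref{lemmaunifQD}, after decomposing $\mu$ into countably many pieces we may assume $\mu$ is uniformly weakly quasi-decaying relative to $\borel$, and a further localization reduces the problem to a single ball $B = B(\bfA_0,\rho_0)\subset\AA$ with $\bfA_0 \in \borel$. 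Since $\norm(\tt)\asymp_\times\|\tt\|$ and the action of $g_\tt$ on lattice minima is Lipschitz in $\tt$, we may also replace $\SS$ by a countable $1$-net $\SS_0 \subset \SS$ without altering the $\limsup$ defining $\omega(\,\cdot\,;\SS,\norm)$.

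For each $\tt \in \SS_0$ set
\[
W_\tt \df \{\bfA \in B\cap\borel : \exists\,\rr \in \Lambda_0\butnot\{\0\},\ \|g_\tt u_\bfA\rr\|\leq e^{-c\,\norm(\tt)}\},
\]
so that $\{\omega(\,\cdot\,;\SS,\norm) > c\}\cap B\cap\borel \subset \limsup_\tt W_\tt$. Because $|\{\tt\in\SS_0:\norm(\tt)\leq T\}|$ grows only polynomially in $T$, the Borel--Cantelli lemma reduces the theorem to proving $\mu(W_\tt)\lesssim_\times e^{-\eta\,\norm(\tt)}$ for some fixed $\eta>0$ and all sufficiently large $\tt$.

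To bound $\mu(W_\tt)$, note that for fixed $\rr \in \Lambda_0$ the map $\bfA\mapsto g_\tt u_\bfA\rr$ is affine in the entries of $\bfA$, and since those entries are among the coordinates of $\EE$ it extends to an affine map $F_{\tt,\rr}\colon \EE\to\R^{\pdim+\qdim}$. Thus the set $\{\bfA \in \AA : \|F_{\tt,\rr}(\bfA)\|\leq e^{-c\,\norm(\tt)}\}$ is an intersection of finitely many slabs $\{|\ell_i(\bfA)|\leq \delta_i\}$ for affine functions $\ell_i$ on $\EE$ and widths $\delta_i$ comparable to $e^{-c\,\norm(\tt)}$. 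Whenever some $\ell_i$ is non-constant on $\AA$, weak quasi-decay on $\AA$ --- applied with $\beta = \delta_i/\|d_{\LL_i}\|_{\mu,B}$, which satisfies $\beta\leq\rho_0^\gamma$ once $\norm(\tt)$ is large --- gives $\mu(\{|\ell_i|\leq\delta_i\}\cap B\cap\borel)\lesssim_\times \beta^\alpha\mu(B)$. A Minkowski-type lattice count, grouping $\rr$ by the $g_\tt$-twisted size profile of $u_{\bfA_0}\rr$ as in \cite{KleinbockMargulis}, limits the number of contributing $\rr$ of each profile; summing the per-$\rr$ estimates then yields the required exponential decay of $\mu(W_\tt)$.

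The main obstacle is the degenerate case in which every component $\ell_i$ of $F_{\tt,\rr}$ is constant on $\AA$, since weak quasi-decay then gives no useful information. Such $\rr$ are \emph{universal} approximants at time $\tt$: one has $\|F_{\tt,\rr}(\bfA)\| = \|F_{\tt,\rr}(\bfB)\|$ for every $\bfA,\bfB \in \MM\cap\AA$, so these $\rr$ contribute to $W_\tt$ only when they already witness $\|g_\tt u_\bfB\rr\|\leq e^{-c\,\norm(\tt)}$ for some (equivalently every) fixed $\bfB\in\MM\cap\AA$. Selecting $\bfB\in\MM\cap\AA$ with $\omega(\bfB;\SS,\norm) < c$ (which exists because $c > c_0$), one concludes that such universal approximants can contribute for only finitely many $\tt\in\SS_0$, so they are irrelevant for Borel--Cantelli. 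Carefully disentangling these two regimes, and quantifying the Minkowski lattice count for the non-degenerate $\rr$ in the affine-subspace setting, is the technical core of the argument --- essentially the place where the KMW--KLW machinery must be revisited.
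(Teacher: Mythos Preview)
Your outline correctly sets up the Borel--Cantelli reduction, but the proposed bound on $\mu(W_\tt)$ has a genuine gap: summing the per-$\rr$ quasi-decay estimates over all contributing $\rr\in\Lambda_0$ does not give exponential decay. For $\bfA$ ranging over a fixed ball $B_0$, the condition $\|g_\tt u_\bfA\rr\|\leq e^{-c\,\norm(\tt)}$ can hold for any $\rr$ with $\|\rr\|\lesssim e^{\|\tt\|_\infty}$, so there are of order $e^{(\pdim+\qdim)\|\tt\|_\infty}$ candidates. Even granting a lower bound $\|d_{\LL_i}\|_{\mu,B_0}\gtrsim e^{-\tau\norm(\tt)}$ for $\tau$ close to $c_0$ (which itself needs an argument), the per-$\rr$ saving from weak quasi-decay is only $e^{-(c-\tau)\alpha\,\norm(\tt)}$ with $\alpha$ fixed by $\mu$ and possibly very small; the sum over $\rr$ then diverges. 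The ``Minkowski-type lattice count grouping $\rr$ by size profile'' you attribute to \cite{KleinbockMargulis} is not a device used there --- that paper, and \cite{KLW,KMW}, never sum over individual lattice vectors. Your constant/non-constant dichotomy for $F_{\tt,\rr}\given\AA$ is also too coarse: the problematic $\rr$ are not those for which $F_{\tt,\rr}$ is exactly constant on $\AA$, but those for which the linear part is merely small, and there is no finiteness statement for these.

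The paper's argument replaces vectors $\rr$ by rational subspaces $V\leq\R^{\pdim+\qdim}$ and the covolume functions $f_{\tt,V}(\bfA)=\Covol(g_\tt u_\bfA(\Lambda_0\cap V))$, which by Lemma~\ref{lemmaFtV} are norms of affine maps on $\EE$. For each $\tt$ one builds a finite tree of balls in $\AA$, to each of which is attached an adapted flag $\FF$ of such subspaces (Lemmas~\ref{lemmabasecase} and~\ref{lemmainductivestep}); on each leaf, membership in $W_{\kappa,\tt}$ forces one of the at most $\pdim+\qdim$ covolume functions $f_{\tt,V}$, $V\in\FF$, to be small (Lemma~\ref{lemmaexistsV}), yielding a single hyperplane neighborhood per leaf rather than a sum over $\rr$. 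Tree growth is controlled by bounded multiplicity and the quasi-Federer property. The new ingredient over \cite{KLW,KMW} --- and the point your sketch misses entirely --- is the lower bound~\eqref{rx3t} on the radii of the balls in the tree, which is precisely what allows one to verify $\beta\lesssim\rho^{\gamma'}$ at every node and hence to substitute weak quasi-decay for friendliness. The quantitative lower bound on $\|F_{\tt,V}\given\AA\|$ that plays the role of your degenerate case is encoded in the definition~\eqref{omegaLdef} of $\omega(\AA;\SS,\norm)$ and the hypothesis~\eqref{KLWassumption} of Lemma~\ref{lemmaexponents}.
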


We now begin the preliminaries to the proof of this theorem, which involve finding an alternate expression for the right hand side of \eqref{exponents2}.

\begin{notation*}
Let $\VV$ denote the collection of all rational subspaces of $\R^{\pdim + \qdim}$. Note that $(\VV,\subset)$ is a partially ordered set whose maximal chains are all of length $(\pdim + \qdim)$. We will call the elements of $\VV$ ``vertices'', to emphasize that we are thinking about $\VV$ as a combinatorial object, namely a partially ordered set under inclusion. For each $V\in\VV$, $\bfA\in\MM$, and $\tt\in\mfa$, let
\[
f_{\tt,V}(\bfA) \equiv f_\tt(\bfA,V) \df \Covol\big(g_\tt u_\bfA(\Lambda_0\cap V)\big),
\]
where $\Covol$ denotes the covolume of a discrete subgroup of $\R^{\pdim + \qdim}$ with respect to some fixed norm on $\R^{\pdim + \qdim}$, relative to the $\R$-linear span of that discrete subgroup.
\end{notation*}

We will think of the number $f_{\tt,V}(\bfA)$ as a sort of ``accuracy of approximation'' of the rational subspace $V\leq \R^{\pdim + \qdim}$, relative to the window $\tt$, in analogy to how the number $\|g_\tt u_\bfA \rr\|$ can be thought of as the ``accuracy of approximation'' of an integer vector $\rr\in \Z^{\pdim+\qdim}$. The important thing is that smaller values of $f_{\tt,V}(\bfA)$ mean that $\bfA$ is more well approximable and larger values mean that it is less well approximable. The connection between the values of $f_{\tt,V}(\bfA)$ for various $\tt,V$ and the approximability of $\bfA$ in the sense of the Dani--Kleinbock--Margulis correspondence principle will be made more clear in the proof of Lemma \ref{lemmaeasydirection} below.

\begin{lemma}
\label{lemmaFtV}
For each $V\in\VV$ and $\tt\in\mfa$, there exists an affine map $F_{\tt,V}:\EE\to \bigwedge^{\dim(V)} \R^{\pdim + \qdim}$ such that for all $\bfA\in \MM$,
\[
f_{\tt,V}(\bfA) = \|F_{\tt,V}(\psi_{\bothdim}(\bfA))\|,
\]
where $\|\cdot\|$ is the wedge power of the norm used to define covolume.
\end{lemma}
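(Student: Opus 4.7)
The plan is to construct $F_{\tt,V}$ explicitly by unpacking the wedge-product formula for covolumes. Fix a $\Z$-basis $\rr_1,\ldots,\rr_k$ of $\Lambda_0\cap V$, where $k=\dim V$, and set $\omega_V \df \rr_1\wedge\cdots\wedge \rr_k \in \bigwedge^k \R^{\pdim+\qdim}$. By the standard identification of covolume with the norm of a wedge product,
\[
f_{\tt,V}(\bfA) = \big\|\textstyle\bigwedge^k(g_\tt u_\bfA)(\omega_V)\big\| = \big\|\textstyle\bigwedge^k g_\tt \cdot \bigwedge^k u_\bfA(\omega_V)\big\|,
\]
so since $\bigwedge^k g_\tt$ is an $\bfA$-independent linear operator on $\bigwedge^k \R^{\pdim+\qdim}$, it is enough to produce an affine map $G:\EE\to\bigwedge^k \R^{\pdim+\qdim}$ with $G(\psi(\bfA))=\bigwedge^k u_\bfA(\omega_V)$ and then set $F_{\tt,V} \df \bigwedge^k g_\tt\circ G$.

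To build $G$, I would expand $\bigwedge^k u_\bfA(\omega_V)$ in the standard basis. Writing $\omega_V=\sum_{|J|=k} p_J(V)\,\ee_J$ with the $p_J(V)$ the (constant) Pl\"ucker coordinates of $V$, multilinearity gives
\[
\textstyle\bigwedge^k u_\bfA(\omega_V) = \sum_{|J|=k} p_J(V)\,\bigwedge_{j\in J} u_\bfA \ee_j.
\]
Split $J=J'\sqcup J''$ with $J'\subseteq[\pdim]$ and $J''\subseteq[\pdim{+}1,\pdim{+}\qdim]$. Since $u_\bfA \ee_j=\ee_j$ for $j\leq\pdim$ and $u_\bfA \ee_{\pdim+j'}=\bfA\ee_{j'}\oplus\ee_{\pdim+j'}$, the inner wedge expands as a sum indexed by subsets $T\subseteq J''$ (choosing which ``mixed'' factors to keep) of
\[
\textstyle\pm\, \bigwedge_{j\in J'}\ee_j \wedge \bigwedge_{j\in T}\bfA\ee_{j-\pdim} \wedge \bigwedge_{j\in J''\setminus T}\ee_j.
\]
The term $\bigwedge_{j\in T}\bfA\ee_{j-\pdim}$ lives in $\bigwedge^{|T|}\R^\pdim$ and its coefficients (in the $\{\ee_{I'}\}$ basis with $I'\subseteq[\pdim]$, $|I'|=|T|$) are precisely the $|T|\times|T|$ minors of $\bfA$ with column set $T-\pdim$. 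Thus each coordinate of $\bigwedge^k u_\bfA(\omega_V)$ in the standard basis of $\bigwedge^k\R^{\pdim+\qdim}$ is a fixed $\R$-linear combination (with coefficients determined by $V$) of minors of $\bfA$ of sizes $|T|\in\{0,1,\ldots,\min(k,\pdim,\qdim)\}$, plus a constant arising from the $T=\emptyset$ contributions.

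To finish, I match these minors with components of $\psi(\bfA)$. By expanding $\bigwedge_{j=1}^\qdim(\bfA\ee_j\oplus\ee_j) - \bigwedge_{j=1}^\qdim(\0\oplus\ee_j)$ the same way, the coordinate of $\psi(\bfA)\in\EE$ along $\ee_I$ (with $I=I'\sqcup I''$, $|I'|=s\geq 1$) is, up to sign, the $s\times s$ minor of $\bfA$ with row set $I'$ and column set $[\qdim]\setminus(I''-\pdim)$; every nontrivial minor of $\bfA$ of every size from $1$ to $\min(\pdim,\qdim)$ arises exactly once. Since the size bound $|T|\leq\min(\pdim,\qdim)$ holds, each coefficient of $\bigwedge^k u_\bfA(\omega_V)$ is an affine combination of the coordinates of $\psi(\bfA)$, and this entrywise prescription defines the sought-after affine $G:\EE\to\bigwedge^k\R^{\pdim+\qdim}$ on all of $\EE$ (not only on $\psi(\MM)$). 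Setting $F_{\tt,V}\df\bigwedge^k g_\tt\circ G$ completes the proof.

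The only real work is bookkeeping: keeping track of signs from antisymmetry of the wedge, and confirming the bijection between ``$T$-indexed minors appearing in $\bigwedge^k u_\bfA(\omega_V)$'' and ``coordinates of $\psi(\bfA)$.'' There is no analytic difficulty; the content of the lemma is the structural fact that the matrix entries of $u_\bfA$ enter $\bigwedge^k u_\bfA(\omega_V)$ only through products of distinct columns of $\bfA$, which is exactly what $\psi$ records.
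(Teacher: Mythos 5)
Your argument is correct and follows essentially the same route as the paper: reduce $f_{\tt,V}(\bfA)=\|\bigwedge^{k}(g_\tt u_\bfA)(\omega_V)\|$ to the statement that $\bigwedge^{k}u_\bfA(\omega_V)$ depends affinely on $\psi(\bfA)$, then verify this by expanding in the standard basis and matching the resulting minors of $\bfA$ with the Pl\"ucker coordinates. The paper simply records the resulting explicit formula for the affine map where you derive it via the $T\subseteq J''$ bookkeeping, but the content is identical.
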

\begin{proof}
Let $\bb_1,\ldots,\bb_v$ be an integral basis of $V$. Then the parallelepiped $\sum_{i = 1}^v [0,1]\bb_i$ is a fundamental domain for $\Lambda_0\cap V$. It follows that for each $\bfA\in\MM$, the parallelepiped $\sum_{i = 1}^v [0,1]g_\tt u_\bfA \bb_i$ is a fundamental domain for $g_\tt u_\bfA(\Lambda_0\cap V)$. Thus the covolume of $g_\tt u_\bfA(\Lambda_0\cap V)$ is equal to the volume of the parallelepiped, i.e.
\[
f_{\tt,V}(\bfA) = \Vol\left(\sum_{i = 1}^v [0,1]g_\tt u_\bfA \bb_i\right) = \|g_\tt u_\bfA \bb_1\wedge\cdots\wedge g_\tt u_\bfA \bb_v\|
= \|g_\tt u_\bfA(\bb_1\wedge\cdots\wedge\bb_v)\|.
\]
So to complete the proof, it suffices to show that for all $\tau\in \bigwedge^v \R^{\pdim + \qdim}$, the map
\[
F_\tau(\psi_{\bothdim}(\bfA)) \df u_\bfA(\tau)\in \bigwedge^v \R^{\pdim + \qdim}
\]
can be extended affinely to all of $\EE$. But this follows from the following explicit formula for $F_\tau$:
\[
F_\tau(\psi_{\bothdim}(\bfA))
= \sum_{\substack{I,J\subset\{1,\ldots,\pdim + \qdim\} \\ \#(I) = \#(J) = k \\ I\cap \{1,\ldots,\pdim\} \subset J}} \epsilon_{IJ} \tau_I \big[\psi_{\bothdim}(\bfA)\big]_{K(I,J)} \ee_J
\]
where we use the notations
\begin{align*}
\ee_I \df \bigwedge_{i\in I} &\ee_i, \;\;\;
\tau \df \sum_{\substack{I\subset \{1,\ldots,\pdim + \qdim\} \\ \#(I) = v}} \tau_I \ee_I,\;\;\;
\epsilon_{IJ}\in\{\pm 1\}, \;\;\; \big[\psi_{\bothdim}(\bfA)\big]_\smallemptyset \df 1,\\
K(I,J) &\df (\{1,\ldots,\pdim\}\cap (J\butnot I))\cup (\{\pdim + 1,\ldots,\pdim + \qdim\}\butnot (I\butnot J)).
\qedhere\end{align*}
\end{proof}

In the sequel we will extend $f_{\tt,V}$ to $\EE$ by letting $f_{\tt,V}(\sigma) = \|F_{\tt,V}(\sigma)\|$ for all $\sigma\in\EE$.

Given an affine subspace $\AA\subset\EE$, a set $\SS\subset\mfa$, and a function $\norm:\SS\to\Rplus$, let
\begin{equation}
\label{omegaLdef}
\omega(\AA;\SS,\norm) \df \limsup_{\SS\ni\tt\to\infty} \sup_{V\in\VV} \frac{-\log \|F_{\tt,V}\given \AA\|}{\norm(\tt) \dim(V)},
\end{equation}
where
\[
\|F\given\AA\| \df \|F(\zero_\AA)\| \vee \sup_{\substack{\sigma\in \AA \\ \|\sigma - \zero_\AA\| \leq 1}} \|F(\sigma) - F(\zero_\AA)\|.
\]
Here $\zero_\AA\in\AA$ is chosen so as to minimize $\|\zero_\AA\|$. We will show that $\omega(\AA;\SS,\norm)$ is equal to the right hand side of \eqref{exponents2}. One direction we can show now, and the other direction will follow from the proof of Theorem \ref{theoremexponents2}.

\begin{lemma}
\label{lemmaeasydirection}
With the above notation,
\[
\inf\big\{\omega(\bfA;\SS,\norm) : \bfA\in \MM\cap \AA\} \geq \omega(\AA;\SS,\norm).
\]
\end{lemma}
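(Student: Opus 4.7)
The plan is to relate $\Delta(g_\tt u_\bfA \Lambda_0)$ to the values $f_{\tt,V}(\bfA) = \|F_{\tt,V}(\psi(\bfA))\|$ via Minkowski's theorem, and then bound $\|F_{\tt,V}(\psi(\bfA))\|$ by $\|F_{\tt,V}\given\AA\|$ using the fact that $\psi(\bfA)\in\AA$ lies within a bounded distance of $\zero_\AA$ (with the bound depending on $\bfA$ but not on $\tt$ or $V$).

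First, fix $\bfA\in\MM\cap\AA$ and $V\in\VV$ with $v \df \dim(V)\geq 1$. The set $g_\tt u_\bfA(\Lambda_0\cap V)$ is a discrete subgroup of $\R^{\pdim+\qdim}$ of rank $v$ with covolume $f_{\tt,V}(\bfA)$ inside its linear span. By Minkowski's first theorem (applied inside this span), there is a nonzero $\rr\in\Lambda_0\cap V$ with
\[
\|g_\tt u_\bfA \rr\| \lesssim_\times f_{\tt,V}(\bfA)^{1/v}.
\]
Since $\rr\in\Lambda_0\butnot\{\0\}$ as well, this vector contributes to $\Delta(g_\tt u_\bfA\Lambda_0)$, so
\begin{equation}
\label{sketchminkowski}
\Delta(g_\tt u_\bfA \Lambda_0) \;\geq\; -\frac{1}{v}\log f_{\tt,V}(\bfA) \;-\; O(1),
\end{equation}
where the implied constant depends only on $\pdim,\qdim$ and the choice of norm.

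Second, since the map $F_{\tt,V}$ is affine (Lemma \ref{lemmaFtV}) and $\psi(\bfA)\in\AA$, writing $R_\bfA \df 1 + \|\psi(\bfA) - \zero_\AA\|$ gives
\[
f_{\tt,V}(\bfA) \;=\; \|F_{\tt,V}(\psi(\bfA))\| \;\leq\; R_\bfA \cdot \|F_{\tt,V}\given\AA\|,
\]
by the definition of $\|F_{\tt,V}\given\AA\|$. Note that $R_\bfA$ depends on $\bfA$ alone, not on $\tt$ or $V$. Combining with \eqref{sketchminkowski},
\[
\Delta(g_\tt u_\bfA\Lambda_0) \;\geq\; \frac{-\log\|F_{\tt,V}\given\AA\|}{v} \;-\; \frac{\log R_\bfA}{v} \;-\; O(1).
\]
Dividing by $\norm(\tt)$, taking $\sup$ over $V\in\VV$, and then taking $\limsup_{\SS\ni\tt\to\infty}$, the error terms $(\log R_\bfA + O(1))/\norm(\tt)$ tend to $0$ (since $\norm(\tt)\asymp_\times \|\tt\|\to\infty$), so that
\[
\omega(\bfA;\SS,\norm) \;\geq\; \limsup_{\SS\ni\tt\to\infty}\sup_{V\in\VV}\frac{-\log\|F_{\tt,V}\given\AA\|}{\norm(\tt)\,\dim(V)} \;=\; \omega(\AA;\SS,\norm).
\]
Taking the infimum over $\bfA\in\MM\cap\AA$ completes the proof.

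The only nontrivial ingredient is the Minkowski bound producing a short lattice vector inside the subgroup $g_\tt u_\bfA(\Lambda_0\cap V)$; all other steps are direct manipulations. The harder direction of the identification of $\omega(\AA;\SS,\norm)$ with $\inf_{\bfA\in\MM\cap\AA}\omega(\bfA;\SS,\norm)$ — namely the reverse inequality — is where the weak quasi-decay hypothesis must enter, and will be proved in the course of Theorem \ref{theoremexponents2}.
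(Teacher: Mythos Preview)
Your proof is correct and follows essentially the same route as the paper's own argument: bound $f_{\tt,V}(\bfA)$ by a constant (depending only on $\bfA$) times $\|F_{\tt,V}\given\AA\|$ using affinity of $F_{\tt,V}$, apply Minkowski's theorem to extract a short lattice vector in $g_\tt u_\bfA(\Lambda_0\cap V)$, and then divide by $\norm(\tt)$ and take the limsup so that the $\bfA$-dependent constant washes out. The only cosmetic difference is that the paper uses $1+\|\psi(\bfA)\|$ in place of your $R_\bfA = 1+\|\psi(\bfA)-\zero_\AA\|$, which is immaterial.
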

\begin{proof}
Fix $\bfA\in\MM\cap \AA$, $\tt\in\SS$, and $V\in\VV$. Then
\[
f_{\tt,V}(\bfA) \leq \|F_{\tt,V}\given \AA\|\cdot(1 + \|\psi_{\bothdim}(\bfA)\|).
\]
By Minkowski's theorem, there exists a vector $\vv\in \Lambda_0\cap V$ such that
\[
\|g_\tt u_\bfA \vv\| \leq 2f_{\tt,V}(\bfA)^{1/\dim(V)},
\]
so
\[
\Delta\big(g_\tt u_\bfA \Lambda_0\big) \geq \frac{-\log\big(2^{\dim(V)}(1 + \|\psi_{\bothdim}(\bfA)\|)\cdot \|F_{\tt,V}\given \AA\|\big)}{\dim(V)}\cdot
\]
Dividing by $\norm(\tt)$ and taking the limsup over $\SS\ni\tt\to\infty$ completes the proof.
\end{proof}

\subsection{Proof of Theorem \ref{theoremexponents2}}
By Lemma \ref{lemmaeasydirection}, to prove Theorem \ref{theoremexponents2} it suffices to show that for $\mu$-a.e. $\bfA\in\MM\cap \AA$, we have $\omega(\bfA;\SS,\norm) \leq \omega(\AA;\SS,\norm)$. We now state a lemma which will allow us to prove this:

\begin{lemma}
\label{lemmaexponents}
Let $\mu$ be a measure on $\MM$ which is supported on an affine subspace $\AA\subset\EE$ and which is uniformly weakly quasi-decaying and uniformly quasi-Federer relative to $\borel\subset \MM\cap\AA$ when interpreted as a measure on $\AA$. Let $X = \Supp(\mu) \subset \MM\cap\AA$. Fix $\gamma > 0$ and a ball $B_0 = B_X(\bfA_0,\rho_0)$. Consider $\tt\in\mfa$ and $0 < \kappa \leq 1$ such that
\begin{align}
\label{KLWassumption}
\sup_{2B_0} f_{\tt,V} \geq \kappa^{\dim(V)}
\end{align}
for all $V\in\VV$, and let
\[
W_{\kappa,\tt} \df \{\bfA\in X : \exists \vv\in\Lambda_0\butnot\{\0\} \;\; \|g_\tt u_\bfA \vv\| \leq e^{-\gamma\|\tt\|} \kappa\}.
\]
Then there exists $\epsilon > 0$ (depending on $\mu,\borel,\gamma$ but not $\kappa,\tt$) such that
\[
\mu(W_{\kappa,\tt}\cap B_0\cap \borel) \lesssim_\times e^{-\epsilon\|\tt\|}.
\]
\end{lemma}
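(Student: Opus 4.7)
The plan is to follow the Kleinbock--Margulis quantitative nondivergence scheme, as refined in \cite{KLW, KMW}, but with weak quasi-decay replacing the decay/friendliness hypothesis and with everything restricted to the affine subspace $\AA$.

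First I would reduce the statement about short vectors of $g_\tt u_\bfA \Lambda_0$ to a statement about the family of functions $f_{\tt, V}$. Suppose $\bfA \in W_{\kappa,\tt}$, let $\epsilon \df e^{-\gamma\|\tt\|}\kappa$, and let $k = k(\bfA)$ be the largest integer such that the $k$th successive minimum of $g_\tt u_\bfA\Lambda_0$ is $\leq C\epsilon$, where $C > 0$ is a constant depending only on $\pdim+\qdim$. Let $V(\bfA) \in \VV$ be the $\Q$-span of $\{\vv\in\Lambda_0 : \|g_\tt u_\bfA \vv\|\leq C\epsilon\}$; this is a rational subspace of dimension $k$. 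By Minkowski's second theorem applied to $g_\tt u_\bfA(\Lambda_0\cap V(\bfA))$, the covolume $f_{\tt, V(\bfA)}(\bfA)$ is comparable to the product of the first $k$ successive minima, which is at most $(C\epsilon)^k$. Hence
\[
W_{\kappa,\tt}\;\subset\;\bigcup_{V\in\VV}\bigl\{\bfA\in X : f_{\tt,V}(\bfA)\lesssim_\times \epsilon^{\dim V}\bigr\},
\]
and it suffices to bound the $\mu$-measure of each of these level sets intersected with $B_0\cap \borel$.

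Next I would verify that weak quasi-decay on $\AA$ upgrades to a \emph{good functions property} for $\{f_{\tt,V}\given \AA\}_{V\in\VV}$. By Lemma \ref{lemmaFtV}, for each fixed $V$ the map $F_{\tt,V}\given \AA$ is affine with values in $\bigwedge^{\dim V}\R^{\pdim+\qdim}$, and $f_{\tt,V} = \|F_{\tt,V}\|$. Given a ball $B\subset \AA$, pick a unit vector $u^*$ in the target space maximizing $\sup_B |u^*\cdot F_{\tt,V}|$; then this sup is $\asymp_\times \sup_B f_{\tt,V}$, and the scalar affine function $\phi \df u^*\cdot F_{\tt,V}\given \AA$ has zero set an affine hyperplane $\LL_V\subset \AA$. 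For $\bfA\in B\cap \Supp(\mu)$ one has $|\phi(\bfA)| = \|\nabla\phi\|\cdot\dist(\bfA,\LL_V)$, and $\sup_{B\cap\Supp(\mu)}|\phi| = \|\nabla\phi\|\cdot\|d_{\LL_V}\|_{\mu,B}$, so the sublevel set $\{\|F_{\tt,V}\|\leq \beta\sup_B f_{\tt,V}\}\cap B$ is contained in $\thickvar{\LL_V}{C\beta\|d_{\LL_V}\|_{\mu,B}}\cap B$ for an absolute constant $C$. Uniform weak quasi-decay relative to $\borel$ then yields $\mu(\cdots\cap \borel)\lesssim_\times \beta^\alpha \mu(B)$, provided $\beta\leq \rho^\gamma$; this last restriction is harmless because the values of $\beta$ arising in the covering argument are comparable to $e^{-\gamma'\|\tt\|}$ for some $\gamma' > 0$, hence automatically small whenever $\|\tt\|$ is large, and the small-$\|\tt\|$ case can be absorbed into the implied constant.

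With the good functions property in hand, the proof follows the recursive partition argument of \cite[Thm.~5.2]{KleinbockMargulis} / \cite[Thm.~4.2]{KLW}. Using a Besicovitch-type cover together with uniform quasi-Federer, one subdivides $B_0$ into scales $B$ and, at each scale, distinguishes cases according to which $V\in\VV$ is ``dominant'' (in the sense that $\sup_{2B} f_{\tt,V}$ is the smallest relative to the threshold $\kappa^{\dim V}$). Within a ball where $V$ is dominant, either one can directly bound $\mu\{f_{\tt,V}\lesssim \epsilon^{\dim V}\}\cap B\cap \borel$ by the good functions property with $\beta\asymp (\epsilon/\kappa)^{\dim V}$, producing a factor $\beta^\alpha\lesssim e^{-\gamma\alpha\|\tt\|}$, or one refines to sub-balls on which a strictly larger $V'\supsetneq V$ takes over. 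The induction terminates after at most $\pdim+\qdim$ steps because chains in $(\VV,\subset)$ have bounded length, and summing the geometric contributions gives $\mu(W_{\kappa,\tt}\cap B_0\cap \borel)\lesssim_\times e^{-\epsilon\|\tt\|}$ for $\epsilon = \gamma\alpha/2$, say.

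The main obstacle is precisely the gap between the hypothesis \eqref{KLWassumption}, which is a \emph{global} lower bound on $2B_0$, and weak quasi-decay, which furnishes only a \emph{local} estimate in terms of $\|d_\LL\|_{\mu,B}$ on each individual ball. Bridging this gap is the whole point of the KLW/KM recursive covering: one cannot in general compare $\sup_B f_{\tt,V}$ and $\sup_{2B_0} f_{\tt,V}$ directly, and the recursion is designed so that whenever this comparison fails on a sub-ball, a larger (and hence less abundant) subspace $V'$ becomes dominant, controlling that sub-ball's contribution instead. Careful choice of decay exponents $\alpha(\gamma_k,\mu)$ from Lemma \ref{lemmaQD} for the finitely many scales $\gamma_k$ encountered in the recursion yields a single $\alpha > 0$ that works throughout, giving the desired $\epsilon > 0$ independent of $\kappa$ and $\tt$.
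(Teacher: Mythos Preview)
Your overall architecture is right --- the KM/KLW recursive covering is indeed the backbone --- but there is a genuine gap at the point where you dismiss the restriction $\beta \leq \rho^\gamma$ as ``harmless''. Knowing that $\beta \asymp e^{-\gamma'\|\tt\|}$ is small is not the same as knowing that $\beta \leq \rho^\gamma$: the latter is a \emph{relation} between $\beta$ and the radius $\rho$ of the ball on which you want to apply weak quasi-decay. In the standard KLW recursion the radii of the sub-balls are determined by when a new subspace becomes ``marked'', and there is no a~priori lower bound on those radii in terms of $\|\tt\|$; a sub-ball can in principle have radius much smaller than any fixed power of $e^{-\|\tt\|}$. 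With friendliness this does not matter because the decay estimate holds for all $\beta>0$, but with weak quasi-decay it is fatal unless you can guarantee $\rho \gtrsim e^{-c\|\tt\|}$ for every ball in the tree.

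This lower bound is exactly the new ingredient the paper supplies (and explicitly flags as having no analogue in \cite{KleinbockMargulis2, KLW, BKM, KMW}): in the inductive step one shows that for any $\bfA$ in a permissible ball with flag $\FF$, there is an $\FF$-addable vertex $V$ satisfying $f_{\tt,V}(\bfA) \leq \tfrac12\eta(\dim V)$ (obtained via Minkowski applied to a quotient lattice, using the $\FF$-concavity of $\eta$), and a Lipschitz estimate for $\bfB\mapsto f_{\tt,V}(\bfB)$ then forces the radius $\rho_\bfA$ at which $V$ ceases to be approximable to satisfy $\rho_\bfA \gtrsim e^{-2\|\tt\|_\infty}$. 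With this in hand, $\beta \asymp e^{-\gamma\|\tt\|}$ does give $\beta \lesssim \rho^{\gamma/2}$, and weak quasi-decay applies. The same lower bound on $\rho$ is also what makes the \emph{quasi}-Federer condition usable: quasi-Federer only gives $\mu(\lambda B)\lesssim \rho^{-\epsilon}\mu(B)$, and the factor $\rho^{-\epsilon}$ must be bounded by $e^{\epsilon'\|\tt\|}$ to sum the tree. Your sketch invokes quasi-Federer but does not explain how to absorb this loss, and without the radius lower bound you cannot.
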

In this lemma and its proof, we understand the metric on $X$ to be the one inherited from the vector space $\EE$, not the one inherited from the vector space $\MM$.
\begin{proof}[Proof of Theorem \ref{theoremexponents2} assuming Lemma \ref{lemmaexponents}]
Let $\borel\subset \MM\cap\AA$, $X = \Supp(\mu)$, $\gamma > 0$, and $B_0 \subset B_X(\bfA_0,\rho_0)$ be as in Lemma \ref{lemmaexponents}, with the additional constraint that $\borel\cap 2B_0\neq \emptyset$. Let $\tau = \omega(\AA;\SS,\norm) + \gamma$, and for each $\tt\in\SS$, let $\kappa_\tt = e^{-\tau \norm(\tt)}$. Fix $\bfA\in \borel\cap 2B_0$. Since $\mu$ is weakly quasi-decaying at $\bfA$ relative to $\borel$, it follows that $2B_0 = B_\AA(\bfA_0,2\rho_0)\cap \Supp(\mu)$ cannot be contained in an affine hyperplane of $\AA$, so the affine span of $2B_0$ is equal to $\AA$. Thus for all $V\in\VV$ we have
\[
\sup_{2B_0} f_{\tt,V} \asymp_\times \|F_{\tt,V}\given \AA\|
\]
and so by \eqref{omegaLdef}, if $\tt\in\SS$ is sufficiently large then
\[
\sup_{2B_0} f_{\tt,V} \geq e^{-\tau \norm(\tt) \dim(V)} = \kappa_\tt^{\dim(V)}.
\]
So by Lemma \ref{lemmaexponents}, if $\mu$ is uniformly weakly quasi-decaying relative to a set $\borel\subset X$ when interpreted as a measure on $\AA$, then
\[
\mu(W_{\kappa_\tt,\tt}\cap B_0\cap \borel) \lesssim_\times e^{-\epsilon\|\tt\|}.
\]
Thus the Borel--Cantelli lemma implies that for $\mu$-a.e. $\bfA\in B_0\cap \borel$ we have
\begin{equation}
\label{borelcantelli}
\#\{\tt\in\SS : \bfA\in W_{\kappa_\tt,\tt}\} < \infty.
\end{equation}
But if $\bfA$ satisfies \eqref{borelcantelli}, then
\begin{align*}
\omega(\bfA;\SS,\norm) &\leq \limsup_{\SS\ni\tt\to\infty} \frac{-\log(e^{-\gamma\|\tt\|} \kappa_\tt)}{\norm(\tt)}
= \tau + \gamma\limsup_{\SS\ni\tt\to\infty} \frac{\|\tt\|}{\norm(\tt)} \noreason\\
&= \omega(\AA;\SS,\norm) + (1 + C_1) \gamma. \note{for some $C_1 > 0$}
\end{align*}
Since $\gamma$ and $B_0$ were arbitrary, we have $\omega(\bfA;\SS,\norm)\leq \omega(\AA;\SS,\norm)$ for $\mu$-a.e. $\bfA\in \borel$. Combining with Lemmas \ref{lemmaunifQD}, \ref{lemmaquasifederer}, and \ref{lemmaeasydirection} completes the proof.
\end{proof}

Now we need to prove Lemma \ref{lemmaexponents}. The idea, following \cite{KleinbockMargulis,KLW}, is to construct a cover of the set $W_{\kappa,\tt}\cap B_0$ whose measure can be bounded using fact that $\mu$ is uniformly weakly quasi-decaying relative to $\borel$. To construct this cover, we will first construct a tree $\TT$ such that each node $e\in\TT$ corresponds to a ball in $B_e\subset \AA$. We will also associate to $e$ a \emph{flag}, i.e. a set $\FF_e \equiv \{V_0,\ldots,V_\ell\} \subset \VV$ such that $\{\0\} = V_0 \propersubset V_1 \propersubset \cdots \propersubset V_\ell = \R^{\pdim + \qdim}$. The purpose of the flag $\FF_e$ is to separate potential approximants to points in $B_e$ into $\ell$ different classes: an approximant $\rr\in\Lambda_0\butnot\{\0\}$ is in exactly one of the sets $V_1\butnot V_0,\ldots,V_\ell\butnot V_{\ell - 1}$. In order for this separation to be useful, the flag $\FF_e$ should satisfy the following conditions:
\begin{itemize}
\item[(1)] The quality-of-approximation ratios $f_\tt(B_e,V_{i + 1})/f_\tt(B_e,V_i)$ should not be too large (in terms of $\omega(\AA;\SS,\norm)$), where
\begin{equation}
\label{ftSV}
f_\tt(S,V) \df \sup_S f_{\tt,V}.
\end{equation}
\label{page21}
\item[(2)] For each vertex $V\in\VV$ such that $V_i\subsetneqq V\subsetneqq V_{i + 1}$ for some $i$, the quality of approximation $f_\tt(B_e,V)$ should be bounded from below in terms of $f_\tt(B_e,V_i)$ and $f_\tt(B_e,V_{i + 1})$.
\end{itemize}
The idea of the tree is to give us a picture of what happens as we ``zoom in'' towards a point $\bfA\in X$. On the large scale, we will be able to find a flag $\FF$ which satisfies (1) and (2) which depends only on the Diophantine properties of the affine space $\AA$. As we zoom in, all vertices become better approximations (because the supremum in \eqref{ftSV} is taken over a smaller collection). If this causes a vertex to become a counterexample to (2), then we simply add it to our flag and create a new node on the tree. On the other hand, the probability that a vertex will become a counterexample to (1) (assuming that when we added the vertex to the flag, it satisfied (1)) is small because of the quasi-decay condition. So if $\bfA$ is a typical point, then after we finish the zooming process the flag will still satisfy (1). Lemma \ref{lemmaexistsV} below shows that in this case, $\bfA$ cannot be in $W_{\kappa,\tt}$.

We will encode the Diophantine properties of the root flag $\FF_\smallemptyset$ by defining a function $\eta: \{0,\ldots,\pdim + \qdim\}\to (0,\infty)$ such that for each $j$, $\eta(j)$ represents the quality of the ``best expected approximation'' in dimension $j$. As we zoom in, we will add the vertex $V$ to our flag at the exact moment when the quality of approximation of $V$ becomes better than $\eta(\dim(V))$. For a typical point, this strategy should create a final flag which satisfies condition 1.

\begin{figure}[h!]
\begin{center}
\begin{tabular}{@{}ll|@{}}
\begin{tikzpicture}[line cap=round,line join=round,>=triangle 45,scale=0.6]
\clip(-1.0744866397712483,-1.0652581632465994) rectangle (12.52932974604722,6.807415187466945);
\draw[->] (1.2,2.0)-- (11.4,2.0);
\draw (1.6296536816006593,1.9962393331765842)-- (2.69,0.42);
\draw (4.09,-0.39)-- (2.69,0.42);
\draw (4.09,-0.39)-- (6.89,0.16);
\draw (6.89,0.16)-- (9.694690100098676,1.9776280691815895);
\draw[->] (1.62,1.2)-- (1.62,5.950803061223502);
\draw[thick, dotted] (8.29,2.55)-- (8.29,6.54);
\draw[thick, dotted] (5.49,1.11)-- (5.49,6.11);
\begin{scriptsize}
\draw[color=black] (11.4,1.62) node {dimension};
\draw[color=black] (0.6,5.4) node {height};
\draw[color=black] (0.6,5) node {(log scale)};
\draw[color=black] (1.0,2.0) node {$0$};
\draw [fill=black] (1.62,1.99) circle (1.5pt);
\draw [fill=black] (2.69,0.42) circle (1.5pt);
\draw [fill=black] (4.09,-0.39) circle (1.5pt);
\draw [fill=black] (6.89,0.16) circle (1.5pt);
\draw [fill=black] (9.69,2.0) circle (1.5pt);
\draw [fill=black] (5.49,1.11) circle (1.5pt);
\draw [fill=black] (8.29,2.54) circle (1.5pt);
\end{scriptsize}
\end{tikzpicture}
\begin{tikzpicture}[line cap=round,line join=round,>=triangle 45,scale=0.6]
\clip(-1.0744866397712483,-1.0652581632465994) rectangle (12.52932974604722,6.807415187466945);
\draw[->] (1.2,2.0)-- (11.4,2.0);
\draw (1.6296536816006593,1.9962393331765842)-- (2.69,0.42);
\draw (4.09,-0.39)-- (2.69,0.42);
\draw (4.09,-0.39)-- (6.89,0.16);
\draw (6.89,0.16)-- (9.694690100098676,1.9776280691815895);
\draw[->] (1.62,1.2)-- (1.62,5.950803061223502);
\draw[thick, dotted] (8.29,1.55)-- (8.29,5.54);
\begin{scriptsize}
\draw[color=black] (11.4,1.62) node {dimension};
\draw[color=black] (0.6,5.4) node {height};
\draw[color=black] (0.6,5) node {(log scale)};
\draw[color=black] (1.0,2.0) node {$0$};
\draw [fill=black] (1.62,1.99) circle (1.5pt);
\draw [fill=black] (2.69,0.42) circle (1.5pt);
\draw [fill=black] (4.09,-0.39) circle (1.5pt);
\draw [fill=black] (6.89,0.16) circle (1.5pt);
\draw [fill=black] (9.69,2.0) circle (1.5pt);
\draw [fill=black] (5.49,-0.11) circle (1.5pt);
\draw [fill=black] (8.29,1.55) circle (1.5pt);
\end{scriptsize}
\end{tikzpicture}
\end{tabular}
\caption{Two possible plots of the set\[\hspace{-1 in}\{(\dim(V),\log f_\tt(B(\bfA,\rho),V)) : \text{$V\in\VV$ is $\FF$-addable or satisfies $V\in\FF$}\},\]
along with a graph of the piecewise linear function $\log\eta$. The displayed points represent the minimum plot points over each vertical strip, and the vertical ellipses represent additional ungraphed plot points. The two plots are taken with the same value of $\tt$ and $\bfA$ but different values for $\rho$. As $\rho$ decreases, all plot points will move down, but the probability that any given plot point jumps down a significant amount is small (under the assumption that $\bfA$ is $\mu$-random). Once a plot point ``crosses'' the graph of $\log\eta$, then its corresponding vertex is added to the flag, at which point it is unlikely to move down further. This explains why in the typical case the final plot is essentially the same as the graph of $\log\eta$ on the integers.}
\label{figureflag}
\end{center}
\end{figure}

\begin{definition*}
If $\FF\subset\VV$ is a flag, then the number $\ell \equiv \ell(\FF) \df \#(\FF) - 1$ 
is called the \emph{length} of the flag. A vertex $V\in\VV\butnot\FF$ is \emph{$\FF$-addable} if $\FF\cup\{V\}$ is a flag. A flag $\FF$ is called \emph{maximal} if $\ell(\FF) = \pdim + \qdim$, or equivalently if there is no $\FF$-addable vertex.
\end{definition*}

\begin{definition*}
Given $\eta:\{0,\ldots,\pdim + \qdim\}\to (0,\infty)$ and a vertex $V\in\VV$, a set $S\subset\EE$ is said to be \emph{$(\eta,V)$-approximable} if
\[
f_\tt(S,V) \leq \eta(\dim(V)).
\]
If $S\subset\EE$ is fixed, then the collection of vertices $V\in\VV$ such that $S$ is $(\eta,V)$-approximable will be denoted $\WW(\eta,S)$, and its complement will be denoted $\BB(\eta,S)$.
\end{definition*}

\begin{lemma}[Cf. {\cite[Proposition 5.1]{KLW}}]
\label{lemmaexistsV}
Let $\FF\subset \WW(\eta,S)$ be a maximal flag, and fix $\bfA\in W_{\kappa,\tt}\cap S$. Then there exists $V\in\FF\butnot\{\0\}$ such that
\[
f_{\tt,V}(\bfA) \leq e^{-\gamma\|\tt\|} \kappa \eta(\dim(V) - 1).
\]
\end{lemma}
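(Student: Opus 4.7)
The plan is to let the short vector witnessing $\bfA\in W_{\kappa,\tt}$ select the desired element of the flag, then use the classical covolume-of-sublattice identity to transfer the smallness of that vector to the smallness of $f_{\tt,V}(\bfA)$. Since $\FF$ is maximal, enumerate it as $\{\0\}=V_0\subsetneq V_1\subsetneq\cdots\subsetneq V_{\pdim+\qdim}=\R^{\pdim+\qdim}$, with $\dim V_j=j$. Let $\vv\in\Lambda_0\butnot\{\0\}$ be a witness to $\bfA\in W_{\kappa,\tt}$, so $\|g_\tt u_\bfA\vv\|\leq e^{-\gamma\|\tt\|}\kappa$. Let $i$ be the smallest index with $\vv\in V_i$; since $V_0=\{\0\}$ and $V_{\pdim+\qdim}=\R^{\pdim+\qdim}$ we have $1\leq i\leq \pdim+\qdim$, so $V\df V_i\in\FF\butnot\{\0\}$ is our candidate.

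The core step is the inequality
\[
f_{\tt,V}(\bfA)\;\leq\; f_{\tt,V_{i-1}}(\bfA)\cdot\|g_\tt u_\bfA\vv\|.
\]
To verify it, set $\Lambda'\df g_\tt u_\bfA(\Lambda_0\cap V_{i-1})$ and $\ww\df g_\tt u_\bfA\vv$. Because $\vv\in\Lambda_0\cap V_i$ but $\vv\notin V_{i-1}$ (by minimality of $i$), the group $\Lambda'+\Z\ww$ is a full-rank sublattice of $g_\tt u_\bfA(\Lambda_0\cap V_i)$ in the $i$-dimensional subspace $g_\tt u_\bfA V_i$. Its covolume factors as $\Covol(\Lambda')\cdot d(\ww,\mathrm{span}\,\Lambda')$, and since $\0\in\mathrm{span}\,\Lambda'$ this distance is at most $\|\ww\|$. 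Combined with the standard fact that the covolume of a full-rank sublattice is at least the covolume of the ambient lattice, we obtain the displayed inequality.

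To finish, the hypothesis $\FF\subset\WW(\eta,S)$ together with $\bfA\in S$ gives
\[
f_{\tt,V_{i-1}}(\bfA)\;\leq\; f_\tt(S,V_{i-1})\;\leq\;\eta(i-1)\;=\;\eta(\dim V-1).
\]
Plugging this and the bound $\|g_\tt u_\bfA\vv\|\leq e^{-\gamma\|\tt\|}\kappa$ into the key inequality yields the conclusion. There is no genuine obstacle here: the argument is an immediate consequence of the codimension-one covolume formula and the definition of $\WW(\eta,S)$. Maximality of the flag is only used to guarantee the existence of the predecessor vertex $V_{i-1}\in\FF$ for any $i$ that can arise, and the ambient Pl\"ucker/affine structure on $\EE$ plays no role in this particular lemma.
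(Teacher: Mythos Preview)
Your proof is correct and follows essentially the same route as the paper: you pick the minimal $i$ with $\vv\in V_i$ while the paper picks the maximal $i$ with $\vv\notin V_i$, which is the same choice up to a shift in indexing, and both arguments hinge on the codimension-one covolume inequality $f_{\tt,V_i}(\bfA)\leq \|g_\tt u_\bfA\vv\|\cdot f_{\tt,V_{i-1}}(\bfA)$ combined with $\FF\subset\WW(\eta,S)$. You are in fact slightly more explicit than the paper, which merely cites ``an argument based on the geometric significance of $f_\tt$'' for that inequality.
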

\begin{proof}
Since $\bfA\in W_{\kappa,\tt}$, we have $\|g_\tt u_\bfA \vv\| \leq e^{-\gamma\|\tt\|} \kappa$ for some $\vv\in\Lambda_0$. Write $\FF = \{V_0,\ldots,V_{\pdim + \qdim}\}$ with $\{\0\} = V_0 \propersubset V_1 \propersubset \cdots \propersubset V_{\pdim + \qdim} = \R^{\pdim + \qdim}$. Let $i$ be the largest element of $\{0,\ldots,\pdim + \qdim\}$ such that $\vv\notin V_i$. Then $V_{i + 1} = V_i + \R\vv$. An argument based on the geometric significance of $f_\tt$ shows that
\[
f_\tt(\bfA,V_{i + 1}) \leq \|g_\tt u_\bfA \vv\| f_\tt(\bfA,V_i).
\]
On the other hand, since $\FF\subset \WW(\eta,S)$ and $\bfA\in S$ we have
\[
f_\tt(\bfA,V_i) \leq \eta(\dim(V_i)) = \eta(\dim(V_{i + 1}) - 1)
\]
and by the definition of $\vv$,
\[
\|g_\tt u_\bfA \vv\| \leq e^{-\gamma\|\tt\|} \kappa.
\]
Combining these inequalities completes the proof.
\end{proof}

\begin{definition*}
Fix $\lambda\geq 2$, a flag $\FF\subset\VV$, and a function $\eta:\{0,\ldots,\pdim + \qdim\}\to (0,\infty)$. A ball $B = B_X(\bfA,\rho)$ is said to be \emph{$(\FF,\eta,\lambda)$-permissible} if $\FF\subset \WW(2\eta,2B)$, but every $\FF$-addable vertex is in $\BB(\eta,\lambda B)$.
\end{definition*}

\begin{definition*}
Fix a flag $\FF\subset\VV$ and a function $\eta:\{0,\ldots,\pdim + \qdim\}\to (0,\infty)$. We say that $\eta$ is \emph{$\FF$-concave} if for all $j\notin \{\dim(V):V\in\FF\}$,
\[
\eta(j) \geq 8\sqrt{\eta(j - 1)\eta(j + 1)}.
\]
\end{definition*}

The purpose of concavity is to ensure that if $B$ is $(\FF,\eta,\lambda)$-permissible, then the flag $\FF$ will satisfy condition (2) on p.\pageref{page21}. The factor of $8$ will be important in the proof of \eqref{rx3t} below.

\begin{remark}
\label{remarketa}
If $\eta$ is $\FF$-concave and $\FF = \{V_0,\ldots,V_\ell\}$ with $V_0\propersubset \cdots \propersubset V_\ell$, then for each $i = 0,\ldots,\ell - 1$, if $j = \dim(V_i)$ and $m = \dim(V_{i + 1}) - \dim(V_i)$, then for each $0\leq k \leq m$ we have
\[
\eta(j + k) \geq 8^{k(m - k)} \eta(j + m)^{k/m} \eta(j)^{(m - k)/m}.
\]
\end{remark}

\begin{notation*}
For each $i = 0,\ldots,\pdim + \qdim$ let
\[
C_i \df 4^{i(\pdim + \qdim - i)}.
\]
Note that $C_0 = C_{\pdim + \qdim} = 1$, and $C_i = 4\sqrt{C_{i - 1} C_{i + 1}}$ for all $0 < i < \pdim + \qdim$.
\end{notation*}

\begin{lemma}[Base case]
\label{lemmabasecase}
Fix a ball $B_0 = B_X(\bfA_0,\rho_0)$. Then there exist a flag $\FF_0\subset\VV$ and an $\FF_0$-concave function $\eta:\{0,\ldots,\pdim + \qdim\}\to (0,\infty)$ such that:
\begin{itemize}
\item[(i)] $B_0$ is $(\FF_0,\eta,2)$-permissible for every $V\in\FF_0$,
\item[(ii)] $\FF_0\subset \BB(\eta,2B_0)$,
\item[(iii)] $\eta(j) \leq C_j/2 \all j$, and
\item[(iv)] $\eta(j + 1)/\eta(j) \gtrsim_\times \kappa \all j$.
\end{itemize}
\end{lemma}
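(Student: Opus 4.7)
We construct $\FF_0$ and $\eta$ by an iterative refinement. Write $\phi(V) := f_\tt(2B_0, V)$, and note that $\phi(\{\0\}) = \phi(\R^{\pdim+\qdim}) = 1$ while $\phi(V) \geq \kappa^{\dim V}$ for all $V \in \VV$ by hypothesis \eqref{KLWassumption} of Lemma \ref{lemmaexponents}. For a chain $\FF = \{V_0 \subsetneq V_1 \subsetneq \cdots \subsetneq V_\ell\}$ with $d_i := \dim V_i$, prescribed values $\eta(d_i)$ at flag dimensions are extended to all dimensions by log-quadratic interpolation (the equality case of the 8-concavity inequality from Remark \ref{remarketa}):
\[
\eta(d_i + k) := 8^{k(m-k)}\,\eta(d_{i+1})^{k/m}\,\eta(d_i)^{(m-k)/m}, \qquad m := d_{i+1} - d_i.
\]
This is the pointwise minimum among $\FF$-concave extensions with the given boundary values, making it the best candidate for satisfying (iii) and the addable-vertex portion of (i).

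The greedy algorithm goes as follows. Initialize $\FF := \{\{\0\}, \R^{\pdim+\qdim}\}$ with small boundary values $\eta(0), \eta(\pdim+\qdim) < 1$. At each step, test whether some $\FF$-addable $V$ violates $\phi(V) > \eta(\dim V)$, or whether $\eta(j) > C_j/2$ at some $j$. If so, select a violating vertex $V$ that fits in the appropriate gap of the current chain, add $V$ to $\FF$, set $\eta(\dim V)$ to a value in $[\phi(V)/2,\,\phi(V))$ chosen so as to also preserve (iv), and re-interpolate on the two new sub-gaps. Since $|\FF|$ strictly grows and is capped at $\pdim+\qdim+1$, the procedure halts in at most $\pdim+\qdim-1$ steps. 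At termination, each flag element $V \in \FF_0$ satisfies $\eta(\dim V) < \phi(V) \leq 2\eta(\dim V)$, yielding the flag part of (i) together with (ii); and the absence of addable violators yields the addable part of (i) together with (iii).

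The main difficulty is the interplay between (iii) and (iv). Condition (iii) is tight because $\eta$ must be 8-log-concave while $C_j/2$ is only 4-log-concave ($C_j = 4\sqrt{C_{j-1}C_{j+1}}$), forcing $\FF_0$ to have sufficiently short gaps lest the log-quadratic interpolation exceed $C_j/2$ on long gaps. Condition (iv) translates, via the formula
\[
\frac{\eta(d_i+k+1)}{\eta(d_i+k)} = 8^{m-2k-1}\left(\frac{\eta(d_{i+1})}{\eta(d_i)}\right)^{1/m},
\]
into the lower bound $\eta(d_{i+1})/\eta(d_i) \gtrsim_\times \kappa^m\,8^{m(m-1)}$ (attained at $k = m-1$), which again forces the gap length $m$ to be controlled. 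The existence of enough compatible violating vertices to refine $\FF$ into one with sufficiently short gaps is secured by a Minkowski-type argument using $\phi(V) \geq \kappa^{\dim V}$: rational subspaces of each intermediate dimension with controlled covolume fitting inside the current gap must be exhibited using the arithmetic of $\Lambda_0$. The secondary subtlety is choosing $\eta(\dim V)$ within the permitted window so as to simultaneously respect (iii) and (iv); this requires tracking how the log-quadratic interpolant depends on the ratio of the boundary values and selecting consecutive flag values $\eta(d_i)$ whose ratios lie in a narrow corridor dictated by the displayed formula above.
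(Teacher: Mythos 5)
Your proposal is a plan rather than a proof, and the plan's central mechanism does not work. The two places where you say the real work lies --- ``the existence of enough compatible violating vertices \ldots must be exhibited using the arithmetic of $\Lambda_0$'' and ``this requires tracking how the log-quadratic interpolant depends on the ratio of the boundary values'' --- are exactly the points you leave unexecuted, and the route you sketch for them is flawed. First, the lemma does \emph{not} require $\FF_0$ to have short gaps: $\FF_0=\{\{\0\},\R^{\pdim+\qdim}\}$ is a perfectly admissible outcome, and conditions (iii) and (iv) are compatible with a single gap of full length. You are driven to short gaps only because you insist on the equality-case $8$-log-concave interpolant $8^{k(m-k)}\eta(d_{i+1})^{k/m}\eta(d_i)^{(m-k)/m}$; the right move is to interpolate $\theta(j)=\log(2\eta(j)/C_j)$ linearly on each gap, so that (iii) reduces to checking $\theta\le 0$ at the flag dimensions only. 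Second, your Minkowski step goes the wrong way: Minkowski produces intermediate rational subspaces with covolume bounded \emph{above}, one dimension at a time, whereas to add a vertex to the flag consistently you need it to satisfy the two-sided sandwich $\eta(\dim V)\in[\phi(V)/2,\phi(V))$, and whether a vertex is a ``violator'' depends on $\eta$, which in your scheme depends on which vertices have already been added. You never break this circularity, and your termination argument only shows the iteration stops, not that the final state satisfies (i)--(iv) simultaneously (re-interpolating after each insertion can create new violations of (iii) or new addable violators in other gaps).

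The paper's proof avoids all of this with a single greedy pass. Define $g(V)=\frac{1}{\dim V}\log\bigl(f_\tt(2B_0,V)/C_{\dim V}\bigr)$ (with $g(\{\0\})=-\infty$), and build the flag by repeatedly choosing $V_{i+1}\propersupset V_i$ minimizing $g$. The greedy choice forces $g(V_0)\le g(V_1)\le\cdots\le g(V_\ell)=0$, and setting $\eta(\dim V_i)=f_\tt(2B_0,V_i)/2$ with $\theta$-linear interpolation then yields everything at once: $\theta(j_i)=j_i\,g(V_i)\le 0$ gives (iii); the sandwich $f(V_i)=2\eta(j_i)$ gives (ii) and the flag part of (i); for an addable $V$ with $V_i\propersubset V\propersubset V_{i+1}$ the inequalities $g(V)\ge g(V_{i+1})\ge g(V_i)$ plus linearity give $\theta(\dim V)\le \dim(V)\,g(V)$, i.e.\ $V\in\BB(\eta,2B_0)$, which is the addable part of (i); and the slope bound $\frac{\theta(j_{i+1})-\theta(j_i)}{j_{i+1}-j_i}\ge g(V_{i+1})\gtrsim_\plus\log\kappa$ (using \eqref{KLWassumption}) gives (iv). This normalization by $\dim V$ and the resulting monotonicity of $g$ along the greedy flag is the key idea missing from your write-up.
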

\begin{proof}
For each $V\in\VV$ let
\begin{align*}
f(V) &\df f_\tt(2B_0,V),&
g(V) &\df \frac{\log(f(V)/C_{\dim(V)})}{\dim(V)},
\end{align*}
with the convention that $g(\{\0\}) = -\infty$. Note that by \eqref{KLWassumption}, $g(V)\gtrsim_\plus \log(\kappa)$ for all $V\propersupset \{\0\}$. Let $V_0 = \{\0\}$, and recursively define $V_1,\ldots,V_\ell$ by letting $V_{i + 1}\propersupset V_i$ satisfy
\[
g(V_{i + 1}) = \min\{g(V): V\propersupset V_i\}.
\]
This process halts when $V_\ell = \R^{\pdim + \qdim}$. Note that
\[
g(V_0) \leq g(V_1) \leq \cdots \leq g(V_\ell) = 0.
\]
Let $\FF_0 = \{V_0,\ldots,V_\ell\}$, and for each $i = 0,\ldots,\ell$ let $j_i = \dim(V_i)$ and
\begin{equation}
\label{etadef}
\eta(j_i) = f(V_i)/2.
\end{equation}
Extend $\eta$ to a map $\eta:\{0,\ldots,\pdim + \qdim\}\to(0,\infty)$ which is minimal subject to being $\FF_0$-concave. Equivalently, this extension can be described by the requirement that for each $i = 0,\ldots,\ell - 1$, the function
\[
\theta(j) = \log(2\eta(j)/C_j)
\]
is linear on $\{j_i,\ldots,j_{i + 1}\}$.

For all $i$, since $g(V_i)\leq 0$, we have $\theta(j_i)= j_i g(V_i) \leq 0$. So by linearity, we have $\theta(j)\leq 0$ for all $j$, i.e. $\eta(j)\leq C_j/2$. This demonstrates (iii).

By \eqref{etadef}, we have $\FF_0\subset\BB(\eta,2B_0) \cap \WW(2\eta,2B_0)$. This demonstrates (ii) and the first part of (i). To demonstrate the second part of (i), suppose $V$ is an $\FF$-addable vertex, and write $V_i\propersubset V\propersubset V_{i + 1}$ for some $i = 0,\ldots,\ell - 1$. By the definitions of $V_i$ and $V_{i + 1}$, we have
\[
g(V_i) \leq g(V_{i + 1}) \leq g(V)
\]
and thus
\begin{align*}
\theta(j_i) &\leq j_i g(V),&
\theta(j_{i + 1}) &\leq j_{i + 1} g(V).
\end{align*}
Since $\theta$ is linear on $\{j_i,\ldots,j_{i + 1}\}$, this implies that
\[
\theta(j) \leq j g(V) = \log(f(V)/C_j).
\]
Rearranging gives $\eta(j) \leq f(V)/2$, so $V\in\BB(\eta,2B_0)$ and thus $B_0$ is $(\FF_0,\eta,2)$-permissible.

Finally, to demonstrate (iv), we note that since
\[
\frac{\theta(j_i)}{j_i} = g(V_i) \leq g(V_{i + 1}) = \frac{\theta(j_{i + 1})}{j_{i + 1}},
\]
we have
\[
\frac{\theta(j_{i + 1}) - \theta(j_i)}{j_{i + 1} - j_i} \geq \frac{\theta(j_{i + 1}) - j_i\frac{\theta(j_{i + 1})}{j_{i + 1}}}{j_{i + 1} - j_i} = \frac{\theta(j_{i + 1})}{j_{i + 1}} = g(V_{i + 1}) \gtrsim_\plus \log(\kappa).
\]
By the piecewise linearity of $\theta$, we have $\theta(j + 1) - \theta(j) \gtrsim_\plus \log(\kappa)$ for all $j$, and writing this inequality in terms of $\eta$ yields (iv).
\end{proof}

\begin{lemma}[Inductive step]
\label{lemmainductivestep}
Fix $\lambda \geq 2$, a non-maximal flag $\FF\subset\VV$, an $\FF$-concave function $\eta:\{0,\ldots,\pdim + \qdim\}\to (0,\infty)$, and an $(\FF,\eta,\lambda)$-permissible ball $B = B_X(\bfA_0,\rho_0) \subset \AA$. Then for each $\bfA\in B$, there exists an $\FF$-addable vertex $V_\bfA$ and an $(\FF\cup\{V_\bfA\},\eta,8\lambda)$-permissible ball $B_\bfA = B_X(\bfA,\rho_\bfA)$ such that
\begin{align} \label{Vpermissible}
V_\bfA &\in\BB(\eta, 8\lambda B_\bfA) \\ \label{containment}
2B_\bfA &\subset 2B \\ \label{rx3t}
8\lambda \rho_\bfA &\geq 2^{-(\pdim + \qdim)} e^{-2\|\tt\|_\infty}.
\end{align}
\end{lemma}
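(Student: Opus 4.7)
The plan is to define $\rho_\bfA$ and $V_\bfA$ by a stopping-time construction. For each $\bfA\in B$ and each $\FF$-addable vertex $V$, the function $\rho\mapsto f_\tt(2B_X(\bfA,\rho),V)=\sup_{\sigma\in 2B_X(\bfA,\rho)}\|F_{\tt,V}(\sigma)\|$ is continuous and non-decreasing in $\rho$ by Lemma~\ref{lemmaFtV}. Set $\rho_\bfA$ to be the largest $\rho\in(0,\rho_0/2]$ for which some $\FF$-addable $V$ satisfies $f_\tt(2B_X(\bfA,\rho),V)\leq 2\eta(\dim V)$, and take $V_\bfA$ to be such a vertex, breaking ties arbitrarily. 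The cap $\rho_\bfA\leq\rho_0/2$ together with $|\bfA-\bfA_0|\leq\rho_0$ yields~\eqref{containment} via the triangle inequality; heuristically, $V_\bfA$ is the first $\FF$-addable vertex to cross into $\WW(2\eta,\cdot)$ as the ball centered at $\bfA$ shrinks.

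To verify that $B_\bfA$ is $(\FF\cup\{V_\bfA\},\eta,8\lambda)$-permissible, I would check $\FF\cup\{V_\bfA\}\subset\WW(2\eta,2B_\bfA)$ (monotonicity in the ball handles $\FF$ via $2B_\bfA\subset 2B$ and $\FF\subset\WW(2\eta,2B)$, while $V_\bfA$ is included by its defining inequality) and that every $(\FF\cup\{V_\bfA\})$-addable vertex $V$ lies in $\BB(\eta,8\lambda B_\bfA)$. For the latter, any such $V$ is $\FF$-addable and distinct from $V_\bfA$, so maximality of $\rho_\bfA$ forces $f_\tt(2B_\bfA,V)>2\eta(\dim V)$, giving $f_\tt(8\lambda B_\bfA,V)\geq f_\tt(2B_\bfA,V)>\eta(\dim V)$. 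For~\eqref{Vpermissible} I split into two cases: if $\rho_\bfA<\rho_0/2$, continuity forces $f_\tt(2B_\bfA,V_\bfA)=2\eta(\dim V_\bfA)$, so $f_\tt(8\lambda B_\bfA,V_\bfA)\geq 2\eta(\dim V_\bfA)>\eta(\dim V_\bfA)$; if $\rho_\bfA=\rho_0/2$, the inclusion $\lambda B\subset 8\lambda B_\bfA$ (which holds for $\lambda\geq 2$ when $\bfA\in B$) combined with $(\FF,\eta,\lambda)$-permissibility gives $f_\tt(8\lambda B_\bfA,V_\bfA)\geq f_\tt(\lambda B,V_\bfA)>\eta(\dim V_\bfA)$.

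The main obstacle is the lower bound~\eqref{rx3t}, which I interpret as a floor below which the affine approximation $F_{\tt,V}$ becomes essentially constant on $2B_X(\bfA,\rho)$. From the explicit formula in Lemma~\ref{lemmaFtV}, the linear part of $F_{\tt,V}$ has coefficients that are entries of $g_\tt\tau_V$ for a fixed integer wedge $\tau_V$ representing $V$, and a careful bookkeeping (using $\sum t_i=0$ to control the wedge action of $g_\tt$) bounds the Lipschitz constant of $F_{\tt,V}$ by a dimensional constant times $e^{2\|\tt\|_\infty}$. Continuity of the sup then yields $|f_\tt(2B_X(\bfA,\rho),V)-f_\tt(2B_X(\bfA,\rho'),V)|\lesssim e^{2\|\tt\|_\infty}|\rho-\rho'|$, so once $\rho$ falls below $\sim e^{-2\|\tt\|_\infty}$ shrinking further alters $f_\tt$ only negligibly; I would accordingly take $\rho_\bfA$ to be the maximum of the stopping-time value above and the right-hand floor in~\eqref{rx3t}, with the $2^{-(\pdim+\qdim)}/(8\lambda)$ prefactor absorbing the dimensional constants. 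A subtle case requiring separate treatment is when no $\FF$-addable $V$ ever satisfies $f_{\tt,V}(\bfA)\leq 2\eta(\dim V)$; then the stopping rule produces no valid $V_\bfA$, and I would choose $V_\bfA$ by a fallback rule (e.g.\ the addable vertex minimizing $f_{\tt,V}(\bfA)/\eta(\dim V)$) at the floor radius, relying on the weak quasi-decay hypothesis in Lemma~\ref{lemmaexponents} to absorb the resulting exceptional set into the final measure estimate.
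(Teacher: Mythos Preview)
Your stopping-time construction is close in spirit to the paper's, but there is a genuine gap in your treatment of \eqref{rx3t}, and it is precisely the place where the two nontrivial hypotheses you never use --- non-maximality of $\FF$ and $\FF$-concavity of $\eta$ --- enter.

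The flooring trick you propose cannot work: if you raise $\rho_\bfA$ above your stopping value, then by definition of the stopping value no $\FF$-addable vertex satisfies $f_\tt(2B_\bfA,V)\leq 2\eta(\dim V)$, so there is no legal choice of $V_\bfA$ and the ball is not $(\FF\cup\{V_\bfA\},\eta,8\lambda)$-permissible. Likewise, the ``fallback rule'' and appeal to the weak quasi-decay hypothesis are not available here: the lemma is a deterministic statement about every $\bfA\in B$, and the measure-theoretic input only appears later, in the covering argument. What is actually needed is to show that your ``subtle case'' never occurs, and in a quantitative way. Since $\FF$ is non-maximal, some gap $m=\dim V_{i+1}-\dim V_i$ is at least $2$; apply Minkowski's theorem in the $m$-dimensional quotient $g_\tt u_\bfA(V_{i+1})/g_\tt u_\bfA(V_i)$ to produce $\vv\in\Lambda_0\cap V_{i+1}\setminus V_i$ with $\dist(g_\tt u_\bfA\vv,\,g_\tt u_\bfA V_i)$ bounded by the $m$-th root of the covolume ratio. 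Setting $V=V_i+\R\vv$ gives an $\FF$-addable vertex with
\[
f_{\tt,V}(\bfA)\leq 2\,f_\tt(\bfA,V_{i+1})^{1/m}f_\tt(\bfA,V_i)^{(m-1)/m}\leq 4\,\eta(j+m)^{1/m}\eta(j)^{(m-1)/m}\leq \tfrac12\,\eta(j+1),
\]
the last inequality being exactly where the factor $8$ in $\FF$-concavity is spent (cf.\ Remark~\ref{remarketa}). Now your own Lipschitz estimate on $\bfB\mapsto g_\tt u_\bfB$ shows $f_{\tt,V}(\bfB)\leq\eta(\dim V)$ for all $\bfB$ within $2^{-(\pdim+\qdim)}e^{-2\|\tt\|_\infty}$ of $\bfA$, which forces $\rho_\bfA$ to be at least that large and yields \eqref{rx3t} without any artificial floor. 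The paper organizes the stopping time slightly differently (working at scale $8\lambda$ with dyadic radii and taking the max over addable $V$ of the smallest $\rho$ with $V\in\BB(\eta,8\lambda\cdot)$), but the essential missing idea is this Minkowski-plus-concavity construction of a guaranteed good addable vertex.
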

\begin{remark*}
The condition \eqref{rx3t} is the key ``new'' element of the proof of Theorem \ref{theoremexponents2} which has no analogue in \cite{KleinbockMargulis2, KLW, BKM, KMW}; it will allow us to prove the bound $\beta \leq \rho^\gamma$ for the hyperplane-neighborhoods whose $\mu$-measures we want to bound, thus allowing the weak quasi-decay condition to be used as a substitute for friendliness.
\end{remark*}
\begin{proof}
For each $\FF$-addable vertex $V$ let $\rho_{\bfA,V}$ be the smallest value $\rho\in 2^\Z$ such that
\begin{equation}
\label{rhoAVdef}
V\in \BB(\eta,B_X(\bfA,8\lambda \rho)),
\end{equation}
with $\rho_{\bfA,V} = 0$ if \eqref{rhoAVdef} holds for all $\rho\in 2^\Z$. Let
\[
\rho_\bfA \df \max\{\rho_{\bfA,V}:\text{$V\in\VV$ is $\FF$-addable}\},
\]
let $V_\bfA$ be an $\FF$-addable vertex such that $\rho_\bfA = \rho_{\bfA,V_\bfA}$, and let $B_\bfA = B_X(\bfA,\rho_\bfA)$. Let $\FF_\bfA = \FF\cup\{V_\bfA\}$. By construction, the set $\BB(\eta,8\lambda B_\bfA)$ contains every $\FF$-addable vertex. In particular, \eqref{Vpermissible} holds.

On the other hand, the $(\FF,\eta,\lambda)$-permissibility of $B$ and the minimality of $\rho_{\bfA,V_\bfA}$ together imply that
\[
f_\tt\big(\lambda B,V_\bfA\big) > \eta(\dim(V_\bfA)) \geq f_\tt\big(4\lambda B_\bfA,V_\bfA\big);
\]
it follows that
\[
B_X(\bfA_0,\lambda \rho_0) \nsubset B_X(\bfA,4\lambda \rho_\bfA).
\]
Since $\dist_X(\bfA_0,\bfA) \leq \rho_0$, this implies
\[
4\lambda \rho_\bfA < (\lambda + 1) \rho_0 < 2\lambda \rho_0,
\]
so $\rho_\bfA < \rho_0/2$ and thus \eqref{containment} holds. In particular $\FF\subset \WW(\eta,2B_\bfA)$. On the other hand, $V_\bfA\in \WW(\eta,2B_\bfA)$ since $\rho_\bfA = \rho_{\bfA,V_\bfA}$. Moreover, as noted in the previous paragraph the set $\BB(\eta,8\lambda B_\bfA)$ contains every $\FF$-addable vertex and in particular every $\FF_\bfA$-addable vertex. Thus $B_\bfA$ is $(\FF_\bfA,\eta,8\lambda)$-permissible.

To demonstrate \eqref{rx3t}, we will find an $\FF$-addable vertex $V$ such that $4\lambda \rho_{\bfA,V} \geq  2^{-(\pdim + \qdim)} e^{-2\|\tt\|_\infty}$. Write $\FF = \{V_0,\ldots,V_\ell\}$ with $\{\0\} = V_0 \propersubset V_1 \propersubset \cdots \propersubset V_\ell = \R^{\pdim + \qdim}$. Since $\FF$ is not maximal, we have $m \df \dim(V_{i + 1}) - \dim(V_i) \geq 2$ for some $i$. Let $W_i \df g_\tt u_\bfA(V_i)$, $W_{i + 1} \df g_\tt u_\bfA(V_{i + 1})$, and $\Lambda \df g_\tt u_\bfA \Lambda_0$. Applying Minkowski's theorem to the vector space $W_{i + 1}/W_i$ with the lattice $(\Lambda\cap W_{i + 1})/W_i$, we see that there exists a vector $\ww = g_\tt u_\bfA \vv \in \Lambda\cap W_{i + 1}\butnot W_i$ such that
\[
\dist(\ww,W_i) \leq 2\Covol\big((\Lambda\cap W_{i + 1})/W_i\big)^{1/m}.
\]
Let $V \df V_i + \R\vv$ and $W \df g_\tt u_\bfA(V) = W_i + \R\ww$. Note that $V$ is an $\FF$-addable vertex. We have
\[
\Covol(\Lambda\cap W) = \dist(\ww,W_i)\Covol(\Lambda\cap W_i)
\]
and
\[
\Covol\big((\Lambda\cap W_{i + 1})/W_i\big)
= \frac{\Covol(\Lambda\cap W_{i + 1})}{\Covol(\Lambda\cap W_i)}\cdot
\]
It follows that
\begin{align*}
f_{\tt,V}(\bfA) &= \Covol(\Lambda\cap W)\\
&\leq 2\Covol(\Lambda\cap W_{i + 1})^{1/m}\Covol(\Lambda\cap W_i)^{(m - 1)/m}\\
&= 2f_\tt(\bfA,V_{i + 1})^{1/m} f_\tt(\bfA,V_i)^{(m - 1)/m}.
\end{align*}
Let $j = \dim(V_i)$. Since $\bfA$ is $(\FF,\eta)$-permissible, we have
\begin{equation}
\label{xVbound}
f_{\tt,V}(\bfA) \leq 4\eta(j + m)^{1/m} \eta(j)^{(m - 1)/m} \leq \frac{1}{2}\eta(j + 1)
\end{equation}
where the last inequality follows from Remark \ref{remarketa}. Let $\epsilon = 2^{-(\pdim + \qdim)}$, and note that $(1 + \epsilon)^{\pdim + \qdim} \leq 2$. For all $\bfB\in B_X(\bfA,\epsilon e^{-2\|\tt\|_\infty})$,
\begin{align*}
f_\tt(\bfB,V) &\leq \|g_\tt u_\bfB (g_\tt u_\bfA)^{-1}\|^{\dim(V)} f_{\tt,V}(\bfA)\\
&= \big\|u\big(\diag(e^{t_1},\ldots,e^{t_\pdim})(\bfB - \bfA)\diag(e^{-t_{\pdim + 1}},\ldots,e^{-t_{\pdim + \qdim}})\big)\big\|^{j + 1} f_{\tt,V}(\bfA)\\
&\leq (1 + e^{2\|\tt\|_\infty}\|\bfB - \bfA\|)^{j + 1} \frac12\eta(j + 1)\\
&\leq (1 + \epsilon)^{\pdim + \qdim} \frac12\eta(j + 1) = \eta(j + 1) = \eta(\dim(V)).
\end{align*}
Thus by definition, $8\lambda \rho_{\bfA,V} \geq \epsilon e^{-2\|\tt\|_\infty}$.
\end{proof}

For each $i = 0,\ldots,\pdim + \qdim$ write $\lambda_i = 2\cdot 8^i$. Let $B_0,\FF_0,\eta$ be as in Lemma \ref{lemmabasecase}, and let
\[
\PP \df \{(B,\FF) : \text{$B$ is $(\FF,\eta,\lambda_{\ell(\FF)})$-permissible, $B\subset B_0$, and $\FF\supset\FF_0$}\},
\]
so that $(B_0,\FF_0)\in \PP$. We will now construct a tree in $\PP$ with $(B_0,\FF_0)$ as the root node.

{\bf Construction of children.} Fix $(B,\FF)\in\PP$, and let $\lambda = \lambda_{\ell(\FF)}$. Since $\FF\supset\FF_0$, $\eta$ is $\FF$-concave, so Lemma \ref{lemmainductivestep} applies. For each $\bfA\in B\cap \borel$ let $\rho_\bfA > 0$ be as in Lemma \ref{lemmainductivestep}, so that $\{B_\bfA = B_X(\bfA,\rho_\bfA) : \bfA\in B\cap \borel\}$ is a cover of $B\cap \borel$. By the $4r$-covering lemma (see e.g. \cite[Theorem 8.1]{MSU}), there exists a finite set $(\bfA_i)_{i = 1}^\Index$ such that the collection $\{B_i = B_{\bfA_i} : i = 1,\ldots,\Index\}$ still covers $B\cap \borel$, but the collection $\{(1/4)B_i : i = 1,\ldots,\Index\}$ is disjoint. For each $i$, let $\FF_i = \FF\cup\{V_{\bfA_i}\}$, so that $(B_i,\FF_i)\in \PP$. Let
\[
\CC(B,\FF) \df \{(B_i,\FF_i) : i = 1,\ldots,\Index\} \subset \PP,
\]
and note that
\begin{equation}
\label{children}
B\subset \bigcup\{B_i : (B_i,\FF_i)\in \CC(B,\FF)\}.
\end{equation}
{\bf Covering argument.}
Let $\ell_0 = \ell(\FF_0)$, let $\TT_{\ell_0} \df \{(B_0,\FF_0)\}$, and for each $i = \ell_0 + 1,\ldots,\pdim + \qdim$ let
\[
\TT_i ~\df \bigcup_{(B,\FF)\in \TT_{i - 1}} \CC(B,\FF).
\]
Fix $\bfA\in B_0\cap \borel$. By \eqref{children}, can recursively define a sequence $(B_i,\FF_i)_{i = \ell_0}^{\pdim + \qdim}$ such that for each $i = \ell_0,\ldots,\pdim + \qdim$, we have $(B_i,\FF_i)\in\TT_i$, $\bfA\in B_i$, and if $i > \ell_0$, then
\[
(B_i,\FF_i) \in \CC(B_{i - 1},\FF_{i - 1}).
\]
Write $\FF_i = \FF_{i - 1}\cup\{V_i\}$, so that by Lemma \ref{lemmainductivestep}, $V_i\in \BB(\eta,\lambda_i B_i)$. Also write $\FF_0 = \FF_{\ell_0} = \{V_0,\ldots,V_{\ell_0}\}$, so that by Lemma \ref{lemmabasecase}, $V_i \in \BB(\eta,2 B_0) \subset \BB(\eta,\lambda_{\ell_0} B_{\ell_0})$ for all $i = 0,\ldots,\ell_0$.

If $\bfA\in W_{\kappa,\tt}$, then by Lemma \ref{lemmaexistsV} there exists $i = 0,\ldots,\pdim + \qdim$ such that 
\[
f_\tt(\bfA,V_i) \leq e^{-\gamma\|\tt\|} \kappa \eta(\dim(V_i) - 1).
\] 
Combining with part (iv) of Lemma \ref{lemmabasecase} gives
\[
f_\tt(\bfA,V_i) \lesssim_\times e^{-\gamma\|\tt\|} \eta(\dim(V_i)).
\]
To summarize,
\[
W_{\kappa,\tt}\cap B_0 \cap \borel \subset \bigcup_{i = \ell_0}^{\pdim + \qdim} \bigcup_{(B,\FF)\in\TT_i} \bigcup_{\substack{V\in\FF \\ V\in \BB(\eta,\lambda_i B_i)}} (W_{\kappa,\tt}(V)\cap B),
\]
where
\[
W_{\kappa,\tt}(V) \df \{\bfA\in B_0 : f_{\tt,V}(\bfA) \leq C e^{-\gamma\|\tt\|} \eta(\dim(V))\}
\]
for some $C > 0$.
\begin{claim}
Fix a ball $B \subset X$ and $V\in\BB(\eta,\lambda_{\pdim + \qdim} B)$. Then
\[
\mu(W_{\kappa,\tt}(V)\cap B\cap \borel) \lesssim_\times e^{-\alpha\|\tt\|} \mu(\lambda_{\pdim + \qdim} B)
\]
for some $\alpha > 0$ depending only on $\mu,\borel,\gamma$.
\end{claim}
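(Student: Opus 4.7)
My approach is to reduce the vector-valued smallness condition defining $W_{\kappa,\tt}(V)$ to a scalar affine-hyperplane-neighborhood condition, apply weak quasi-decay at a point of $\borel$ inside the set, and then use quasi-Federer to compare ball measures. Concretely, assuming $W_{\kappa,\tt}(V)\cap B\cap\borel$ is nonempty (else the bound is trivial), I fix any $\bfA$ in it. Write $B = B_X(\bfA_0, r_B)$, $\lambda := \lambda_{\pdim+\qdim}$, and $\eta := \eta(\dim V)$, and let $F := F_{\tt,V}|_\AA$ be the affine map furnished by Lemma \ref{lemmaFtV}, so that $f_{\tt,V}(\cdot) = \|F(\cdot)\|$. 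Since $V\in\BB(\eta,\lambda B)$, there exists $\bfB^*\in\lambda B$ with $\|F(\bfB^*)\| > \eta$. By Hahn--Banach in finite dimension, I can pick a norm-one linear functional $\pi$ with $\pi(F(\bfB^*)) = \|F(\bfB^*)\|$, and set $h := \pi\circ F$: an affine scalar function on $\AA$ satisfying $h(\bfB^*) > \eta$, $|h(\bfB)|\leq\|F(\bfB)\|$ for every $\bfB\in\AA$, and hence $|h|\leq Ce^{-\gamma\|\tt\|}\eta$ on $W_{\kappa,\tt}(V)$.

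For $\|\tt\|$ large enough that $Ce^{-\gamma\|\tt\|} < 1/2$, the function $h$ is nonconstant, so $\LL_0 := \{h=0\}$ is a genuine affine hyperplane of $\AA$. The mean value estimate applied between $\bfA$ and $\bfB^*$ (whose distance is at most $(\lambda+1)r_B$) yields
\[
\|\nabla h\|\geq\frac{|h(\bfB^*)|-|h(\bfA)|}{\dist(\bfA,\bfB^*)}\gtrsim_\times\frac{\eta}{r_B}.
\]
Setting $\rho' := 2\lambda r_B$, both $B$ and $\bfB^*$ lie in $B(\bfA,\rho')$, and for every $\bfB\in W_{\kappa,\tt}(V)\cap B$ the two estimates
\[
\dist(\bfB,\LL_0) = \frac{|h(\bfB)|}{\|\nabla h\|}\lesssim_\times e^{-\gamma\|\tt\|} r_B,\qquad \|d_{\LL_0}\|_{\mu,B(\bfA,\rho')}\geq\frac{|h(\bfB^*)|}{\|\nabla h\|}\gtrsim_\times \frac{\eta}{\|\nabla h\|}
\]
combine to give $W_{\kappa,\tt}(V)\cap B \subset \thickvar{\LL_0}{\beta\|d_{\LL_0}\|_{\mu,B(\bfA,\rho')}}\cap B(\bfA,\rho')$ with $\beta := C''e^{-\gamma\|\tt\|}$.

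With the hyperplane-neighborhood inclusion in hand, I invoke weak quasi-decay at $\bfA\in\borel$. Inequality \eqref{rx3t} from Lemma \ref{lemmainductivestep} supplies $r_B\gtrsim_\times e^{-2\|\tt\|_\infty}$, and equivalence of norms on $\mfa$ upgrades this to $\rho'\gtrsim_\times e^{-c\|\tt\|}$ for some $c = c(\pdim,\qdim) > 0$. Fixing $\gamma' := \gamma/(2c)$ ensures $\beta\leq(\rho')^{\gamma'}$ for $\|\tt\|$ sufficiently large, so uniform weak quasi-decay yields constants $C_1,\alpha > 0$, depending only on $\mu,\borel,\gamma$, with
\[
\mu\big(W_{\kappa,\tt}(V)\cap B\cap\borel\big)\leq C_1\beta^\alpha\mu\big(B(\bfA,\rho')\big)\lesssim_\times e^{-\gamma\alpha\|\tt\|}\mu\big(B(\bfA,\rho')\big).
\]
Then uniform quasi-Federer at $\bfA$ with parameter $\epsilon := \gamma\alpha/(4c)$ provides $\delta > 0$ such that, once $r_B^{-\delta}\geq 2\lambda$, the inclusion $B(\bfA,\rho')\subset B(\bfA,r_B^{1-\delta})$ gives $\mu(B(\bfA,\rho'))\leq C_2 r_B^{-\epsilon}\mu(B(\bfA,r_B))\leq C_2 r_B^{-\epsilon}\mu(\lambda B)\lesssim_\times e^{c\epsilon\|\tt\|}\mu(\lambda B)$, using $B(\bfA,r_B)\subset\lambda B$ (valid since $\bfA\in B$ and $\lambda\geq 2$). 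Collecting produces the claim with exponent $\alpha' := \gamma\alpha/2 > 0$; the bounded-$\|\tt\|$ and non-small-$r_B$ regimes are absorbed by enlarging the implicit constant, as the set in question has measure at most $\mu(\lambda B)$ trivially.

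The principal obstacle is the reduction in the first paragraph: converting the vector-valued smallness $\|F\|\lesssim e^{-\gamma\|\tt\|}\eta$ into a hyperplane-neighborhood of the correct \emph{relative} thickness $\beta = \dist/\|d_{\LL_0}\|_{\mu,B}$. The simultaneous roles of the point $\bfB^*$ (supplied by $V\in\BB(\eta,\lambda B)$) are delicate: it is used both to lower-bound $\|\nabla h\|$ via the mean value estimate \emph{and} to lower-bound $\|d_{\LL_0}\|_{\mu,B(\bfA,\rho')}$, producing the crucial cancellation $\|\nabla h\|\asymp\eta/r_B$ that leaves $\beta$ of size $e^{-\gamma\|\tt\|}$ independently of $r_B$. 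Equally essential is the radius lower bound $r_B\gtrsim e^{-2\|\tt\|_\infty}$ from \eqref{rx3t}: it is precisely what makes $\beta\leq(\rho')^{\gamma'}$ admissible, unlocking weak quasi-decay exactly as foreshadowed in the remark following Lemma \ref{lemmainductivestep}.
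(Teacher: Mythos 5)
Your proof is correct and follows essentially the same route as the paper's: compose $F_{\tt,V}$ with a norm-one functional to get an affine map whose zero set is the hyperplane $\LL$, use the witness point $\bfB^*\in\lambda B$ with $f_{\tt,V}(\bfB^*)>\eta$ to lower-bound $\|d_\LL\|_{\mu,\cdot}$ so that the relative thickness is $\asymp e^{-\gamma\|\tt\|}$ independently of the radius, and invoke \eqref{rx3t} to verify $\beta\leq\rho^{\gamma'}$ before applying uniform weak quasi-decay. The only differences are cosmetic: the paper applies quasi-decay directly to $\lambda_{\pdim+\qdim}B$ and absorbs the recentering into constants, whereas you recenter at $\bfA\in\borel$ and pay a harmless quasi-Federer factor (your explicit mean-value bound on $\|\nabla h\|$ is also unnecessary, since the gradient cancels in the ratio $\dist(\bfB,\LL_0)/\|d_{\LL_0}\|$, exactly as in the paper's constant $c$).
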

\begin{subproof}
Since $V\in\BB(\eta,\lambda_{\pdim + \qdim} B)$, there exists $\bfA\in\lambda_{\pdim + \qdim} B$ such that $f_{\tt,V}(\bfA) > \eta(\dim(V))$. Let $F_{\tt,V}:\EE \to \EE_V \df \bigwedge^{\dim(V)}\R^{\pdim + \qdim}$ be as in Lemma \ref{lemmaFtV}, and let $\pi:\EE_V\to\R$ be a linear map such that
\[
|\pi\circ F_{\tt,V}(\bfA)| \asymp_\times \|F_{\tt,V}(\bfA)\| \text{ and } \|\pi\| = 1.
\]
Let $\LL = (\pi\circ F_{\tt,V})^{-1}(0)\in\Hyp(\AA)$. Then there exists $c > 0$ (depending on $B,V$) such that for all $\bfB\in\MM\cap\AA$,
\[
\dist(\bfB,\LL) = c |\pi\circ F_{\tt,V}(\bfB)|.
\]
Then
\[
\|d_\LL\|_{\mu,\lambda_{\pdim + \qdim}B} \geq \dist(\bfA,\LL) = c |\pi\circ F_{\tt,V}(\bfA)| \asymp_\times c f_{\tt,V}(\bfA) > c\eta(\dim(V)).
\]
So for all $\bfB\in W_{\kappa,\tt}(V)$, we have
\[
\dist(\bfB,\LL) \leq c f_{\tt,V}(\bfB) \lesssim_\times e^{-\gamma\|\tt\|} c\eta(\dim(V)) \lesssim_\times e^{-\gamma\|\tt\|} \|d_\LL\|_{\mu,\lambda_{\pdim + \qdim}B}.
\]
Letting $C$ denote the implied constant, we have
\[
W_{\kappa,\tt} \subset \thickvar\LL{C e^{-\gamma\|\tt\|} \|d_\LL\|_{\mu,\lambda_{\pdim + \qdim} B}}.
\]
Let $\beta \df C e^{-\gamma\|\tt\|}$, and let $\rho$ be the radius of $B$. By \eqref{rx3t},
\begin{equation}
\label{r3t}
8\lambda_{\pdim + \qdim}\rho \geq 2^{-(\pdim + \qdim)} e^{-2\|\tt\|_\infty}
\end{equation}
and thus $\beta \lesssim_\times \rho^{\gamma/2}$. Letting $\alpha \df \alpha(\gamma/2,\mu) > 0$ (cf. Lemma \ref{lemmaQD}), we have
\[
\mu\big(W_{\kappa,\tt}(V)\cap B\cap \borel\big)
\leq \mu\big(\thickvar\LL{\beta \|d_\LL\|_{\mu,\lambda_{\pdim + \qdim}B}}\cap \lambda_{\pdim + \qdim} B\cap \borel\big)
\lesssim_\times e^{-\alpha\|\tt\|} \mu(\lambda_{\pdim + \qdim} B).
\qedhere\]
\end{subproof}

So we get
\[
\mu(W_{\kappa,\tt}\cap B_0\cap \borel) \lesssim_\times e^{-\alpha\|\tt\|} \sum_{i = \ell_0}^{\pdim + \qdim} \sum_{(B,\FF)\in\TT_i} \mu(\lambda_{\pdim + \qdim} B).
\]
Let $\epsilon = \alpha/(\pdim + \qdim + 1) > 0$. To complete the proof of Lemma \ref{lemmaexponents}, it suffices to show that for all $i = \ell_0,\ldots,\pdim + \qdim - 1$, we have
\begin{equation}
\label{indhyp}
\sum_{(B,\FF)\in\TT_i} \mu(\lambda_{\pdim + \qdim} B) \lesssim_\times e^{i\epsilon\|\tt\|}.
\end{equation}
We prove \eqref{indhyp} by induction on $i$. When $i = \ell_0$, it holds trivially since $B_0$ is fixed. If it holds for $i$, then
\begin{align*}
\sum_{(B,\FF)\in\TT_{i + 1}} \mu\big(\lambda_{\pdim + \qdim} B\big)
&\lesssim_\times e^{\epsilon\|\tt\|}\sum_{(B,\FF)\in\TT_{i + 1}}\mu\big((1/4) B\big) \by{\eqref{quasifederer} and \eqref{r3t}}\\
&=_\pt e^{\epsilon\|\tt\|} \sum_{(B,\FF)\in \TT_i} \sum_{(B',\FF')\in \CC(B,\FF)} \mu\big((1/4) B'\big) \noreason\\
&\leq_\pt e^{\epsilon\|\tt\|}\sum_{(B,\FF)\in\TT_i} \mu\big(\lambda_{\pdim + \qdim} B\big) \note{disjointness}\\
&\lesssim_\times e^{(i + 1)\epsilon\|\tt\|}, \by{\eqref{indhyp}}
\end{align*}
i.e. \eqref{indhyp} holds for $i + 1$. This completes the proof of Lemma \ref{lemmaexponents} and thus of Theorems \ref{theoremexponents2}, \ref{theoremexponents}, and \ref{theoremKLW}.

\draftnewpage
\appendix
\section{Counterexample to a hypothesis of KLW}
\label{appendix1}

In \cite[para. after Theorem 6.1]{KLW}, KLW state that ``the non-uniform Federer condition [is] measure class invariant, and it is plausible that the same holds for the non-uniform decay condition.'' The following theorem shows on the contrary that the non-uniform decay condition is not measure class invariant:

\begin{theorem}
\label{theoremmeasureclass}
There exists a measure $\mu$ on $\R$ in the same measure class as Lebesgue measure such that for all $C,\alpha,\rho_0 > 0$ and $x\in\R$, there exist $0 < \rho \leq \rho_0$, $y\in\R$, and $0 < \beta \leq 1$ such that
\begin{equation}
\label{measureclass}
\mu\big(B(y,\beta\rho)\cap B(x,\rho)\big) > C \beta^\alpha \mu\big(B(x,\rho)\big).
\end{equation}
In particular, $\mu$ is not non-uniformly decaying in the sense of \cite[\66]{KLW}; thus non-uniform decay is not a measure class invariant.
\end{theorem}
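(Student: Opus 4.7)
The plan is to build $\mu$ as an absolutely continuous perturbation of Lebesgue measure $\lambda$ by a density that is essentially $1$ but augmented by extremely sharp, tall bumps placed at every rational. Concretely, let $\{q_n\}_{n\ge 1}$ enumerate $\Q$, set $s_n \df 2^{-n}$ and $w_n \df 2^{-n^2}$, and define
\[
f(x) \df 1 + \sum_{n=1}^\infty \frac{s_n}{2 w_n}\, \mathbf{1}_{B(q_n,\, w_n)}(x), \qquad \mu \df f \cdot \lambda.
\]
Since each bump integrates to exactly $s_n$, one has $\int_K f \, d\lambda \le \lambda(K) + \sum_n s_n = \lambda(K) + 1 < \infty$ for every bounded $K$, so $f \in L^1_{\mathrm{loc}}(\R)$; and $f \ge 1$ then gives $\lambda \ll \mu \ll \lambda$, i.e.\ $\mu$ lies in the same measure class as Lebesgue measure.

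To verify \eqref{measureclass}, fix $x \in \R$ and $C, \alpha, \rho_0 > 0$, and set $\rho \df \min(\rho_0/2,\, 1)$, so that $\mu(B(x,\rho)) \le 2\rho + 1 =: K$ regardless of $x$. Since $\Q$ is dense, infinitely many indices $n$ satisfy $|q_n - x| < \rho/2$, and from among these I want one large enough that both $w_n < \rho/2$ (so the $n$-th bump lies inside $B(x,\rho)$) and $C K w_n^\alpha < s_n \rho^\alpha$ (so the concentration beats the target bound). Both conditions are compatible because
\[
\frac{w_n^\alpha}{s_n} \;=\; 2^{\,n - n^2 \alpha} \;\longrightarrow\; 0 \qquad (n \to \infty)
\]
for every fixed $\alpha > 0$. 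Setting $y \df q_n$ and $\beta \df w_n/\rho \le 1$, one has $B(y, \beta\rho) = B(q_n, w_n) \subset B(x, \rho)$, whence
\[
\mu\big(B(y, \beta\rho) \cap B(x, \rho)\big) \;\geq\; s_n \;>\; CK\beta^\alpha \;\ge\; C\beta^\alpha\, \mu\big(B(x, \rho)\big),
\]
which is precisely \eqref{measureclass}.

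There is no deep obstacle to this approach: absolute continuity forbids only literal atoms and does not prevent sharp concentrations of mass, so a plain density construction suffices. The only quantitative balance to strike is between the summability $\sum_n s_n < \infty$ (required for $f \in L^1_{\mathrm{loc}}$) and the requirement that $(w_n/\rho)^\alpha/s_n$ be arbitrarily small along $n$ for every fixed $\alpha > 0$ (required to defeat the $C\beta^\alpha$ bound). Letting $w_n$ decay superpolynomially faster than $s_n$, as with $w_n = 2^{-n^2}$ against $s_n = 2^{-n}$, resolves this trivially. The upshot is a measure $\mu$ equivalent to $\lambda$---which is itself absolutely friendly---yet failing the non-uniform decay condition at every point of $\R$, demonstrating that non-uniform decay is not a measure class invariant.
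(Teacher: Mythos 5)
Your proof is correct and follows essentially the same route as the paper's: a density of the form $1+\sum_n(\text{tall thin bump of mass }s_n\text{ at }q_n)$ with the bump width decaying super\-polynomially faster than the mass, so that $(w_n/\rho)^\alpha/s_n\to 0$ for every fixed $\alpha>0$. The only difference is the choice of width ($2^{-n^2}$ versus the paper's $2^{-2^n}$), which is immaterial.
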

\begin{proof}
Let $(q_\Index)_{\Index\in\N}$ be a dense sequence in $\R$, let $a_\Index = 2^{-\Index}$, let $b_\Index = 2^{2^\Index}$, and let
\[
f_\Index = a_\Index b_\Index \one_{B(q_\Index,1/b_\Index)},
\]
where $\one_S$ denotes the characteristic function of a set $S$. Then $\|f_\Index\|_1 = 2 a_\Index$, so $f \df \sum_{\Index = 1}^\infty f_\Index \in L^1(\R)$. Let $\mu = (1 + f)\lambda$, where $\lambda$ denotes Lebesgue measure. Fix $x\in\R$ and $C,\alpha,\rho_0 > 0$, let $\rho = \rho_0$, and let $B = B(x,\rho)$. Since $(q_\Index)_{\Index\in\N}$ is dense in $\R$, there exist arbitrarily large $\Index$ such that $q_\Index\in B(x,\rho/2)$. For such an $\Index$, let $\beta > 0$ be chosen so that $\beta\rho = 1/b_\Index$, and assume that $\Index$ is large enough such that $\beta \leq 1/2$. Then we have
\[
\frac{\mu\big(B(q_\Index,\beta\rho)\cap B\big)}{\beta^\alpha \mu(B)}
= \frac{\mu\big(B(q_\Index,1/b_\Index)\big)}{(b_\Index\rho)^{-\alpha} \mu(B)} \geq \frac{2 a_\Index b_\Index^\alpha}{\rho^{-\alpha} \mu(B)} \tendsto{\Index\to\infty} \infty
\]
and thus \eqref{measureclass} holds for arbitrarily large $\Index$.
\end{proof}

\begin{remark}
The $\beta$ produced in the above proof can be made to satisfy $0 < \beta \leq \rho^\gamma$ for any given $\gamma > 0$, so it also shows that quasi-decay would not be a measure class invariant if we omitted the intersection with $\borel$ in the left hand side of \eqref{QDwithE}.
\end{remark}

\bibliographystyle{amsplain}

\bibliography{bibliography}

\end{document}